\DeclarePairedDelimiter\ceil{\lceil}{\rceil}
\DeclareMathOperator*{\argmax}{argmax}
\DeclareMathOperator*{\argmin}{argmin} % no space, limits underneath in displays
\def\0{\mathbf{0}}
\def\hD{\hat{D}}
\def\Ebb{\mathbb{E}}
\def\cF{\mathcal{F}}
\def\hG{\widehat{G}}
\def\tG{\widetilde{G}}
\def\cG{\mathcal{G}}
\def\cH{\mathcal{H}}
\def\bI{\mathbf{I}}
\def\cL{\mathcal{L}}
\def\real{\mathbb{R}}
\def\cX{\mathcal{X}}
\def\cY{\mathcal{Y}}
\def\vtheta{{\bm{\theta}}}
\def\vphi{{\bm{\phi}}}
\newtheorem{theorem}{Theorem}[section]
\newtheorem{lemma}{Lemma}[section]
\newtheorem{lemma*}{Lemma}
\newtheorem{assumption}{Assumption}
\numberwithin{equation}{section}
\begin{document}
\begin{frontmatter}

\title{Deep Generative Survival Analysis: Nonparametric Estimation of Conditional Survival  Function}
%A Generative Approach to Nonparametric Estimation of Conditional Survival Function}
%Generative Nonparametric Estimation of Conditional Survival Function}
%Generative Conditional Survival Function Estimation}
%Generative Conditional Survival Function Estimation with Censored Data}
%A Generative Approach to Conditional Survival Function Estimation}
%with Censored Data }
\runtitle{}
\begin{aug}
  \author[A]{\fnms{Xingyu}  \snm{Zhou}\thanksref{t1}\ead[label=e1]{xingyu-zhou@uiowa.edu}},
  \author[B]{\fnms{Wen} \snm{Su}\thanksref{t1}\ead[label=e2]{jenna.wen.su@connect.hku.hk}},
  \author[C]{\fnms{Changyu}  \snm{Liu}\thanksref{t1}\ead[label=e3]{cy.u.liu@connect.polyu.hk}},
  \author[D]{\fnms{Yuling}  \snm{Jiao}\ead[label=e4]{yulingjiaomath@whu.edu.cn}}, \\
  \author[C]{\fnms{Xingqiu}  \snm{Zhao}\ead[label=e5]{xingqiu.zhao@polyu.edu.hk}} \and
  \author[A]{\fnms{Jian}  \snm{Huang}\ead[label=e6]{jian-huang@uiowa.edu}}

\thankstext{t1}{Equal contribution}

  \address[A]{Department of Statistics and Actuarial Science, University of Iowa, Iowa City, Iowa 52242, USA\\
   \printead{e1,e6}}

  \address[B]{Department of Statistics and Actuarial Science, The University of Hong Kong, Hong Kong, China \\
   \printead{e2}}

\address[C]{Department of Applied Mathematics,
   The Hong Kong Polytechnic University,
   Hong Kong, China \\
 \printead{e3,e5}}

  \address[D]{School of Mathematics and Statistics, Wuhan University, Wuhan, Hubei, 430072, China\\
          \printead{e4}}

\end{aug}

\begin{abstract}

We propose a deep generative approach to nonparametric estimation of conditional survival and hazard functions with right-censored data. The key idea of the proposed method is to first learn a conditional generator for the joint conditional distribution of the observed time and censoring indicator given the covariates, and then construct the Kaplan-Meier and Nelson-Aalen estimators based on this conditional generator for the conditional hazard and survival functions. Our method combines ideas from the recently developed deep generative learning and classical nonparametric estimation in survival analysis. We analyze the convergence properties of the proposed method and establish the consistency of the generative nonparametric estimators of the conditional survival and hazard functions. Our numerical experiments validate the proposed method and demonstrate its superior performance in a range of simulated models. We also illustrate the applications of the proposed method in constructing prediction intervals for survival times with the PBC (Primary Biliary Cholangitis) and SUPPORT (Study to Understand Prognoses and Preferences for Outcomes and Risks of Treatments) datasets.
\end{abstract}

\begin{keyword}[class=MSC]
\kwd[Primary ]{62N02}
\kwd{62G05}
\kwd[; secondary ]{62G20}
\end{keyword}

\begin{keyword}
\kwd{Censored survival data}
\kwd{Generative learning}
\kwd{Nonparametric estimation}
\kwd{Neural networks}
\kwd{Wasserstein distance}
\end{keyword}

\end{frontmatter}

\maketitle

\section{Introduction}
\label{intro}
Censored survival data arise in many fields of scientific research, including biomedical studies,  epidemiology, and econometrics, among others. Therefore, it is of great interest to develop methods that can effectively analyze such data.
In this paper, we propose a novel deep generative approach to  nonparametric estimation of conditional survival and hazard functions with right-censored data.
Our proposed approach combines  ideas from the recently developed deep generative learning  and classical nonparametric survival analysis methods, and  leverages the
% capabilities
power of neural network function for approximating multivariate functions.
% and the power of stochastic gradient descent algorithms for complex optimization problems.
%with complex objective functions.

There is a vast literature on the analysis of censored survival data. A majority of the existing methods
%models %for censored survival data
relies on certain structural and functional assumptions on the conditional hazards or conditional distributions of survival time. In particular,
many important
% existing
methods assume a semiparametric regression model for survival time
% on covariates
and develop techniques for dealing with difficulties due to censoring.
For instance, the widely-used
%popular
%regression model for analyzing censored survival data is the
proportional hazards model \citep{cox1972}
%. In this model,
assumed that the conditional hazard function for survival time $T$ of a subject with covariate $X$ takes the form
$
\lambda(t|X)=\lambda_0(t) \exp(X'\beta),
$
where $\lambda_0$ is an unspecified baseline hazard function and $\beta$
% \in \mathbb{R}^d$
is a vector of regression coefficients for the effect of $X$ on the conditional hazard.
This is a semiparametric model since the functional form of $\lambda_0$ is unspecified.
To avoid the difficulty of nonparametric estimation of $\lambda_0$,
\citet{cox1972}
%further
also proposed a conditional likelihood without the baseline hazard $\lambda_0$ for estimating $\beta$.
%for censored data, for estimating $\beta$, An appealing feature of the conditional likelihood %is that it which is only a function of
%$\beta$ without involving $\lambda_0$ and can be used for estimation of $\beta$
%and inference just like a usual likelihood.
Subsequently, \citet{cox1975} introduced the celebrated partial likelihood, generalizing the ideas of conditional and marginal likelihoods. An appealing feature of the partial likelihood is that it is only a function of
$\beta$ without involving $\lambda_0$ and can be used for estimation of $\beta$
and making inference just like a usual likelihood.

\citet{ag1982} considered a counting process formulation of the Cox model and studied the large sample properties of the partial likelihood estimator using the martingale theory.
%In addition to the proportional hazards model, several non-proportional hazards models for censored survival data have also been proposed in the literature. Examples include
Additionally, the additive hazards model \citep{%aalen1980,
co1984, ly1994, ms1994} and the accelerated failure time model \citep{
%miller1976,
bj1979, tsiatis1990, wyl1990} have been proposed as
%are attractive
alternatives to the Cox model. For more details on these models, we refer to \citet{fh1991} and \citet{kp2002}.
%Many methods have been developed to deal with the complication due to the censoring and estimate the regression coefficient in these models and related topics
%We refer to several excellent books on survival analysis for systematic and detailed treatments
%\citep{%co1984, %fh1991, %, abrk1993, %kp2002}.

%A majority of the existing models for censored survival data relies on certain structural and %functional assumptions of conditional hazards or conditional distributions of failure time. In %particular, the Cox model requires  hazards to be proportional, the additive hazards model %assumes additive hazards, and the accelerated failure time model assumes linear covariate effects %on failure time. Although these models had proven themselves successful in numerous %applications, however, violation of key assumptions or model misspecification can lead to %misleading results and unreliable inferences.
%The proportional hazards model,  the covariate effects on the conditional hazard are assumed to be multiplicative; in the additive hazards model, they are assumed to be additive. In addition, the covariates are usually assumed to have linear effects on the hazards as in the Cox regression model or the failure time as in the accelerated failure time model. These models, in particular the proportional hazards model, have proven to be successful in applications. However,  if the model assumptions are not satisfied, the inference and prediction based on the models may not be reliable.

More recently, there have been some interesting works on applications of deep learning in survival analysis.
For example,
\citet{chapfuwa2018} proposed a deep adversarial learning approach to nonparametric estimation for time-to-event analysis, which adopted the GAN objective function
\citep{goodfellow2014generative}
to  exploit information from censored observations;
%Zhong, Mueller and Wang (2021, 2021+)
\citet{ZMW2021a, ZMW2021b} developed neural network approaches for a general class of hazard models, and also considered using neural network functions for approximation in a partly linear Cox model and established the asymptotic properties of the partial likelihood estimator.
%including the asymptotic normality of the estimator of the finite-dimensional parameter.

%With much recent advancement in deep generative learning \citep{sala2015},
Our proposed approach was inspired by the {generative adversarial networks}
(GAN) \citep{goodfellow2014generative} and the Wasserstein GAN (WGAN) \citep{arjovsky17}, which were developed to learn high-dimensional unconditional distributions nonparametrically.
%GAN and WGAN start from a known reference distribution and use a function that pushes the %reference distribution to the data distribution.
%This approach has been generalized to learning conditional distributions with uncensored data
%by several authors.
Several studies have generalized GANs to conditional distribution learning in the complete data setting. For instance, \citet{mirza2014cgan} proposed the conditional generative adversarial networks (cGAN), which solves a two-player minimax game using an objective function with the same form as that of the original GAN.
%\cite{kovachki2021conditional} proposed a conditional sampling method with monotone %GANs that
%In this approach, a monotonicity constraint and a block triangular structure is imposed on
%the generator function.
%The monotonicity constraint leads to an invertible generator %function.
%A limitation of this approach is that it
%requires the reference distribution have the same dimension as the response variable.
%In the high-dimensional settings, this does not allow the exploration of a possible low-dimensional %latent structure in the data.
\cite{zjlh2021} proposed a generative approach to conditional sampling based on the noise-outsourcing lemma and distribution matching, where the Kullback-Liebler divergence was used for matching the generator distribution and the data distribution. The authors also established consistency of the conditional sampler with respect to the total variation distance. %Moreover,
\cite{lzjh2021} studied generative conditional learning
%based on the noise-outsourcing lemma, but used
using the Wasserstein distance, where
% for distribution matching.
the non-asymptotic error bounds and convergence rates for the conditional sampling distribution was established.
%They further showed that their method can mitigate the curse of dimensionality if the data %distribution is supported on a low-dimensional set.
%\citep{mirza2014cgan, kovachki2021conditional, zjlh2021, lzjh2021}.
%Our method is most related to the work by \cite{zjlh2021} and \cite{lzjh2021}.
%, which also used noise outsourcing lemma as the basis for constructing a conditional generator based %on Kull
%A conditional survival function provides a complete description of the relationship between the %survival time and the covariate, in comparison,
%a regression model only contains certain aspects of the relationship.

%Therefore,
%it is of interest to develop methods with fewer parametric assumptions.
%lead to erroneous results.

%they assume a parametric form of the relationship between the survival time $T$ and the covariates, %while leaving certain other aspects of the model nonparametric. For example,
%in the proportional hazards and the additive hazards models, the baseline hazard function is left %unspecified; in the accelerated failure time model, the error distribution is left unspecified.

Building upon the aforementioned works on generative learning,
we propose a model-free,
%nonparametric
deep generative approach to analyzing right censored survival data and refer to it as the Generative Conditional Survival function Estimator (GCSE). The key idea and novelty of GCSE is that it first learns a conditional generator
% nonparametrically
for the conditional joint distribution of the observed time and censoring indicator given covariates, then construct the Kaplan-Meier and Nelson-Aalen estimators using samples from this conditional generator for the conditional hazards and survival functions.
%For convenience, we refer to the proposed estimator as GCSE.
Specifically, let $T$ be the survival time and $C$ be the censoring time, hence we observe $(Y, \Delta)$, where $Y=\min(T,C)$, $\Delta=I(T\le C)$, and $I(\cdot)$ is an indicator function. Let $X \in \mathbb{R}^d$ denote a vector of covariates. We also include a random vector $\eta \in \mathbb{R}^q$ with distribution $P_{\eta}$, which is generated independent of data and used as the source of randomness for generating from the target conditional survival function.

We first estimate a function $G: \mathbb{R}^q \times \mathbb{R}^d \to \mathbb{R}^+\times \{0, 1\}$ nonparametrically such that $G(\eta, x)$ follows the conditional distribution of $(Y, \Delta)$ given $X=x$.  Then, to sample from the conditional distribution, we only need to calculate $G(\eta, x)$ after generating $\eta$ from the reference distribution. Computationally, we will use neural network functions to approximate $G.$ %Then we use the Kaplan-Meier estimator to estimate the conditional
%survival function based on the samples generated by $G$.
%For convenience, we shall referred to $G$ as a conditional generator under censoring, or CGC.
%The main assumption needed is that the survival time and the censoring time are conditionally independent given the covariate, which is the usual assumption for right censored data.
The main task in constructing GCSE is to estimate the conditional generator $G$ for the conditional distribution of $(Y, \Delta)$ given $X$.
%In these works, a conditional generator is used to sample from a conditional distribution.
%The existence of a conditional generator is guaranteed by the noise outsourcing lemma %\citep{kall2002}.
The conditional generator is learned by matching the joint distribution involving the conditional generator and the predictor as well as the joint distribution of the response and the predictor.
%The Kullback-Leibler divergence \citep{zjlh2021} and
The Wasserstein distance
%\citep{lzjh2021}
serves as the discrepancy measure for matching the joint distributions. The validity of this approach is guaranteed by the conditional independence assumption of survival and censoring time given covariates. Clearly, GCSE is different from the existing methods in survival analysis, where the general strategy is to focus on modeling the relationship between the unobserved underlying survival time and the covariates and then deal with the complication due to censoring.

With the formulation described above, we have transformed the problem of estimating the conditional survival function to a nonparametric generative conditional learning problem.
An attractive aspect of GCSE is that it does not make any parametric assumptions on the
structure of the conditional hazard function, which distinguishes it from the existing semiparametric methods for censored survival data.
%The key assumption for the GCSE method is the conditional independence assumption of the survival time and the censoring time given the covariates.
The conditional independence assumption is needed for identifying the conditional distribution of the survival time given the covariates, which is commonly assumed in the existing literature.
%Therefore, it provides a more nonparametric approach to analyzing censored survival data.
GCSE leverages the recent developments in deep generative learning and apply it
%to an important problem with
to censored survival data analysis.
% In particular,  a basic reason for
%What makes the proposed method feasible is the power of deep neural network functions in %approximating multivariate functions.
It also benefits from the recent advances in computational platforms and algorithms such as tensor flow \citep{abadi2016tensorflow} and stochastic gradient descent \citep{kingma2015adam} for solving complex optimization problems.

%Our proposed approach combines the ideas from the recently developed deep generative learning  and %classical nonparametric survival analysis methods. It leverages the capabilities of neural network %function for approximating multivariate functions and the power of stochastic gradient descent %algorithms for optimization with complex objective functions.

The paper is organized as follows. In Section 2, we introduce the GCSE method and provide comprehensive details on the two-step estimation procedure. We establish the asymptotic consistency in Section 3 with detailed proofs relegated to Section 7 and describe the specifics of implementation steps in Section 4. In Sections 5, we conduct extensive simulation studies  and present applications to the PBC and SUPPORT datasets. Section 6 summarizes the proposed methodology and potential future research opportunities.
%The  proofs of lemmas and theorems,  and
Additional numerical results are given in the Appendix.

\section{Generative conditional survival function estimator} %Method}
Let $T\in \mathbb{R}^+$ be a survival time and $X \in \mathbb{R}^d$ be a $d$-dimensional predictor.  In the presence of random censoring, we do not observe $T$ directly, but only observe
$Y=\min\{T, C\}$ and $\Delta=1\{T\le C\}$, where $C$ is a random censoring time.
Together with the predictor $X$, the observable variable is $(Y, \Delta, X)$.
%Our goal is to characterize how the survival time $T$ depends on $X$. In particular,
We are
interested in learning the conditional distribution
%$P_{T\mid X}$
of the survival time $T$ given
the covariate $X$ nonparametrically.
However, since only a censored version of $T$ is observable, we cannot directly apply the existing generative learning methods to learn this conditional distribution.
%$P_{T\mid X}.$
The key idea of GCSE is to first learn a conditional generator for the conditional distribution of the
observed event time and the censoring indicator given the covariate,  and then estimate the conditional hazard and survival functions using the existing nonparametric methods for censored data.
Specifically, GCSE is a two-step approach consisting of the following steps.

\begin{itemize}
\item Step 1. We learn a conditional generator nonparametrically for the conditional distribution $P_{(Y, \Delta)\mid X}.$  Let $\eta \in \mathbb{R}^s$ be a random variable from a simple reference distribution such as normal or uniform, where $s \ge 1.$ We find a $G=(G_1, G_2)$, where $G_1: \mathbb{R}^s \times \cX \mapsto \mathbb{R}^+$ and
     $G_2: \mathbb{R}^s \times \cX \mapsto \{0, 1\},$ such that
\begin{align}
\label{Gdef}
(G_1(\eta, x), G_2(\eta, x))\sim P_{(Y, \Delta)\mid X=x}, \ x \in \cX.
\end{align}
That is, $(G_1(\eta, x), G_2(\eta, x))$ is distributed as the observed time $Y$ and its censoring indicator $\Delta$ for an individual whose covaraite $X=x$.

\item Step 2. We apply the Kaplan-Meier and Nelson-Aalen estimators to the samples from the
conditional generator $(G_1(\eta, x), G_2(\eta, x))$ for any given $x \in \cX$. This yields the desired conditional hazard and
survival functions of $T$ given $X=x$.
\end{itemize}

In Step 1, the existence of such a $G$ is guaranteed by the noise outsourcing lemma
 (Theorem 5.10 in \cite{kall2002}).
To ensure that  Step 2 leads to a consistent estimator, we make the usual conditional independence assumption:
\begin{assumption}
\label{asp1}
The failure time $T$ and the censoring time $C$ are conditionally independent given the covariate $X$.
\end{assumption}

Below we describe Steps 1 and 2 in detail.

\subsection{Estimation of the conditional generator}
As described above, the key idea in our proposed method is to learn a conditional generator for the joint conditional distribution $P_{(Y, \Delta)\mid X}$ of the censored response and the censoring indicator given the covariate.
 %To simplify the notation, we denote $W=(Y, \Delta).$
By the noise outsourceing lemma  (Theorem 5.10 in
%and Corollary 5.13 of
\cite{kall2002}), there exist a function $G=(G_1, G_2): \mathbb{R}^s \times \mathbb{R}^d \to \mathbb{R}^+ \times \{0,1\}$ and a random variable $\eta \sim P_{\eta}$  in $\mathbb{R}^s$  independent of $X$ such that
 \begin{align}
 \label{NO1}
 (X, G_1(\eta, X), G_2(\eta, X)) = (X, Y, \Delta), \ \text{ almost surely}.
 \end{align}
 Denote the joint distribution of $(X, G_1(\eta, X), G_2(\eta, x))$ by $P_{X,G_1, G_2}.$
 Since $\eta$ and $X$ are independent, $(G_1(\eta, x), G_2(\eta, x)) \sim P_{(Y, \Delta)\mid X=x}$ if and only if (\ref{NO1}) holds.
 Therefore,  to find the conditional generator, it is equivalent to find a $(G_1^*, G_2^*)$
 such that the joint distribution $P_{X,G_1^*, G_2^*}$
 of $(X, G_1^*(\eta, X), G_2^*(\eta, X))$ matches the joint distribution $P_{X, Y, \Delta}$ of $(X, Y, \Delta)$.

%For this purpose, we need a divergence measure between two distributions.

Let $\mathcal{D}$ be a divergence measure between the distributions $P_{X, G_1, G_2}$
and $P_{X, Y, \Delta}.$
Suppose $\mathcal{D}$ has the following two properties
%are required for $\mathcal{D}$:
\begin{itemize}
\item  $\mathcal{D}(P_{X,G_1. G_2}, P_{X,Y, \Delta}) \ge 0$ for every measurable $(G_1, G_2),$ % \in \mathcal{G}$;
\item
 $\mathcal{D}(P_{X,G_1, G_2}, P_{X, Y, \Delta})=0$
if and only if $P_{X, G_1, G_2}=P_{X, Y, \Delta}.$
\end{itemize}
Then we can characterize $(G_1^*, G_2^*)$ as a solution to the minimization problem
\[
(G_1^*, G_2^*)\in \argmin_{G_1, G_2} \mathcal{D}(P_{X, G_1, G_2}, P_{X, Y, \Delta}).
\]

Many divergence measures for probability distributions have been proposed in the literature.
%For example, a class of such measures is the $f$-divergence \citep{ali1966general}, which %includes the Kullback-Liebler and Jensen-Shannon divergences as special cases.
In this work, we take $\mathcal{D}$ to be  the 1-Wasserstein distance  \citep{villani2008optimal2}, which has been used in the context of generative learning  \citep{arjovsky17}.
A computationally convenient form of the 1-Wasserstein metric is the Monge-Rubinstein dual \citep{villani2008optimal2},
% The 1-Wasserstein metric can also be expressed as
\begin{equation}
\label{1wassersteinb}
d_{W_1}(\mu, \nu)=\sup_{f \in \cF^1_{\text{Lip}}}
\left\{\Ebb_{U \sim \mu} f(U) - \Ebb_{V\sim \nu} f(V) \right\}.
\end{equation}
where $\cF^1_{\text{Lip}}$ is the $1$-Lipschitz class,
\begin{equation}
\label{Lpi1}
\cF^1_{\text{Lip}}=\{f: \mathbb{R}^k \to \mathbb{R}, |f(u)-f(v)| \le \|u-v\|_2,\  u \text{ and } v \in \mathbb{R}^k\}.
\end{equation}

An attractive property of the Wasserstein distance is that it metricizes the space of probability distributions under mild conditions. In comparison, other discrepancy measures including the Kullback-Liebler and Jensen-Shannon divergences do not have this property.  Also, since the computation of the Wasserstein distance does not involve density functions, we can use it to learn distributions without a density function, such as distributions supported on a set with a lower intrinsic dimension than the ambient dimension.

% For two probability measures $\mu$ and $\nu$ with the finite first moment, the %1-Wasserstein distance is  defined as
%Let $\Omega$  be a subset of $\mathbb{R}^k, k \ge 1,$ on which the measures we consider are %defined. Let $\cB_1(\Omega)$ be the set of Borel probability measures on $\Omega$ with finite first %moment, that is,  the set of probability measures
%$\mu$ on $\mathbb{R}^k$ such that $\int_{\Omega}\|x\|_1 d\mu(x) < \infty$, where
%$\|\cdot\|_1$ denotes the 1-norm in $\mathbb{R}^k$.
%The $1$-Wasserstein metric is defined as
%\begin{equation}
%\label{wassersteinp}
%d_{W_1}(\mu, \nu)= \inf_{\gamma \in \Gamma(\mu, \nu)}
%\int \|u-v\|_1d\gamma(u, v),
%\end{equation}
%where $\Gamma(\mu, \nu)$ is the set of joint probability distributions with marginals
%$\mu$ and $\nu.$
%If $\Omega$ is bounded,  then  $W_1$  metricizes the weak convergence of %measures on $\cB_1(\Omega),$ that is,  the convergence
%with respect to $W_1$  is equivalent to weak convergence of measures.
%The $1$-Wasserstein metric is also known as the earth mover distance.
\iffalse
A computationally more convenient form of the 1-Wasserstein metric is the Monge-Rubinstein dual \citep{villani2008optimal2},
% The 1-Wasserstein metric can also be expressed as
\begin{equation}
\label{1wassersteinb}
d_{W_1}(\mu, \nu)=\sup_{f \in \cF^1_{\text{Lip}}}
\left\{\Ebb_{U \sim \mu} f(U) - \Ebb_{V\sim \nu} f(V) \right\}.
\end{equation}
where $\cF^1_{\text{Lip}}$ is the $1$-Lipschitz class,
\begin{equation}
\label{Lpi1}
\cF^1_{\text{Lip}}=\{f: \mathbb{R}^k \to \mathbb{R}, |f(u)-f(v)| \le \|u-v\|_2,\  u \text{ and } v \in \mathbb{R}^k\}.
\end{equation}
%When only random samples from $\mu$ and $\nu$ are available in practice, it is easy to obtain the %empirical version of (\ref{1wassersteinb}).
\fi

By (\ref{1wassersteinb}), the 1-Wasserstein distance between $P_{X, G_1, G_2}$ and $P_{X, Y, \Delta}$ is
\begin{align*}
d_{W_1}(P_{X, G_1, G_2}, P_{X, Y, \Delta})
 = \sup_{D\in \cF^1_{\text{Lip}}}\left\{\Ebb_{(X, \eta)}
%\sim P_X P_{\eta}}
D(X,G_1(\eta,X), G_2(\eta, X))-\Ebb_{(X, Y,\Delta)}
%\sim P_{X, Y, \Delta}}
D(X,Y, \Delta)\right\}.
\end{align*}
We have $d_{W_1} (P_{X, G_1, G_2},  P_{X, Y, \Delta}) \ge 0$ for every measurable $G$ and
$d_{W_1}(P_{X, G_1, G_2}, P_{X, Y, \Delta}) =0$ if and only if $P_{X, G_1, G_2}=P_{X, Y, \Delta}.$
So a sufficient and necessary condition for
\[
(G_1^*, G_2^*) \in \argmin_{G_1, G_2}  d_{W_1}(P_{X, G_1, G_2}, P_{X, Y, \Delta})
\]
is  $P_{X, G_1^*, G_2^*} = P_{X, Y, \Delta}$, which implies
 $(G_1^*(\eta, x), G_2^*(\eta, x)) \sim P_{(Y, \Delta)\mid X=x},$ for any $x \in \cX.$
Therefore, at the population level, the problem of finding the conditional generator can be
formulated as the minimax problem:
\[
\argmin_{(G_1, G_2) \in \cG} \argmax_{D \in \cF^1_{\text{Lip}}} \cL (G_1, G_2, D),
\]
where
\begin{equation}
\label{objL}
\cL(G_1, G_2, D)=\Ebb D(X,G_1(\eta,X), G_2(\eta, X))-\Ebb D(X,Y, \Delta).
\end{equation}

Suppose we have a random sample of censored observations  $\{(X_i, Y_i, \Delta_i), i=1, \ldots, n\}$
that are i.i.d. $(X, Y, \Delta).$
%Denote  $W_i = (Y_i, \Delta_i).$
 Let $\{\eta_i, i=1,\ldots,n\}$ be random variables independently generated from  $P_{\eta}$.
An empirical version of $\cL(G_1,G_2, D)$ based on  $(X_i, Y_i, \Delta_i)$
and $\eta_i, i=1,\ldots, n,$ is
\begin{align}
\label{objE}
\cL_n(G,D) =& \frac{1}{n}\sum_{i=1}^n D(X_i, G_1(\eta_i,X_i), G_2(\eta_i, X_i))
-\frac{1}{n}\sum_{i=1}^n D(X_i,Y_i, \Delta_i) .
\end{align}

We use a feedforward neural network $(G_{\vtheta_1}, G_{\vtheta_2})$ with parameter $(\vtheta_1, \vtheta_2)$ for estimating the conditional generator
%$(G_1, G_2)$
and a second network $D_{\vphi}$ with parameter $\vphi$ for estimating the discriminator.
% $D$.
%A detailed description of neural network functions can be found in \citet{goodfellow2016deep}.
{\color{black}
We use sigmoid activation at the output layer of $G_{\vtheta_2}$ so that it takes values in
$[0. 1].$
The network parameters  $(\vtheta_1, \vtheta_2)$ and $\vphi$ are estimated
by solving the minimax problem:
\begin{equation}
(\hat{\vtheta}_{1n},\hat{\vtheta}_{2n}, \hat{\vphi}_n)=\argmin_{\vtheta_1, \vtheta_2}\argmax_{\vphi}\cL_n(G_{\vtheta_1}, G_{\vtheta_2}, D_{\vphi}).
\label{2}
\end{equation}
Denote $\tG_n = (G_{\hat{\vtheta}_{1n}}, G_{\hat{\vtheta}_{2n}}).$
Since the censoring indicator $\Delta \in \{0, 1\}$, we take the  estimated conditional generator to be
\begin{equation}
\label{3}
\hG_n = (\hG_{1n}, \hG_{2n})=(G_{\hat{\vtheta}_{1n}}, I\{ G_{\hat{\vtheta}_{2n}}\geq 0.5 \}),
\end{equation}
and the estimated discriminator is $\hD_n=D_{\hat{\vphi}_n}$.
% To accommodate the value of the censoring indicator, we let  $G_{\theta_2}$  has a sigmoid output activation function, {\color{red} which has value between $0$ and $1$}. And
Therefore, in our implementation, we determine  whether {\color{black}$G_{\hat\vtheta_{1n}}$ } is censored based on if the value of {\color{black}$G_{\hat\vtheta_{2n}}$}  is smaller than 0.5.
}

%that is, for any $x\in\cX$ and conditional generator estimator $\hat{G}_n$, we have %$(\hat{Y}^x,\hat{\Delta}^x)=(\hat{G}_{1n}(\eta,x), 1(\hat{G}_{2n}(\eta, x)\geq 0.5)).$

\subsection{The conditional Kaplan-Meier and Nelson-Aalen estimators}
We first describe the conditional cumulative hazard and survival functions based on the conditional generator $G$ at the population level.
%To make the notation in line with the usual notation in survival analysis, denote
To use the standard notation in survival analysis, we write
\begin{align}
\label{YDdef}
(Y^x, \Delta^x)=(G_1(\eta, x), G_2(\eta, x)) \sim P_{(Y, \Delta)\mid x}, \quad x \in \cX.
\end{align}
%By (\ref{Gdef}),
That is, $(Y^x, \Delta^x)$ is distributed as the conditional distribution of $(Y, \Delta)$ given $X=x$.
Let $H^x(t)=H(t\mid x)$ and $H^x_1(t)=H_1(t\mid x)$ be the conditional  distribution of $Y^x$ and the conditional subdistribution of $(Y^x, \Delta^x=1)$, respectively, that is,
\begin{align}
\label{Hdef1}
1-H^x(t)=\mathbb{P}(Y^x > t), \
H^x_1(t)=\mathbb{P}(Y^x \le t, \Delta^x=1).
\end{align}
Under the conditional independence Assumption \ref{asp1}, we have
\begin{align}
\label{ind1}
1-H^x(t)=(1-F^x(t))(1-Q^x(t)) \ \text{ and } \
H_1^x(t) = \int_{[0,t]}(1-Q_{-}^x(u)) dF^x(u),
\end{align}
where $Q^x$ is the conditional distribution function of the censoring variable $C$ given $X=x$, and $Q_{-}^x(t)$ is the left limit of $Q^x$ at point $t$.
The conditional cumulative hazard function of the \textit{survival time $T$ given $X=x$}  is
\begin{align}
\label{cH1}
\Lambda^x(t)\equiv \Lambda(t\mid x) = \int_{[0, t]} \frac{1}{1-F^x_{-}(u)} dF^x(u).
\end{align}
Multiplying $1-Q_{-}^x(u)$ in the numerator and denominator of the integrant in (\ref{cH1}), expression (\ref{ind1}) leads to the familiar representation of the cumulative conditional hazard function $\Lambda^x$ in terms of $H^x$ and $H^x_1$,
\begin{align}
\label{cH2}
\Lambda^x(t)=
%\int_{[0, t]} \frac{1-Q^x_{-}(u)}{(1-F^x_{-}(u))(1-Q_{-}^x(u))} dF^x(u)=
\int_{[0,t]} \frac{1}{1-H^x_{-}(u)} dH^x_1(u).
\end{align}
Therefore,  $\Lambda^x$ is identifiable through $H^x$ and $H^x_1$,
due to the conditional independence Assumption \ref{asp1}.
Then the conditional survival function $S^x(t)\equiv 1- F^x(t)$ can be obtained via the product integral formula
\begin{align}
\label{KM1}
S^x(t) = \prod_{u\in [0, t]} (1-\Lambda^x\{u\}) {\exp(-\Lambda^{xc}(t))},
\end{align}
where $\Lambda^x\{u\}$ is the jump of $\Lambda^x$ at $u$ and $\Lambda^{xc}$ is the continuous
part of $\Lambda^x,$ see, for example,
% \citet{abrk1993}, or
Lemma 25.74 in \citet{vaart2000asymptotic}.

We estimate the conditional cumulative hazard and survival functions as follows.
Let $(\widehat G_{1n}, \widehat G_{2n})$ be the estimated conditional generator.
For any $x\in \cX$, the distribution of $(\widehat G_{1n}(\eta, x), \widehat G_{2n}(\eta, x))$ is an estimator of the conditional distribution $P_{(T,\Delta)\mid X=x}.$
Similarly to (\ref{YDdef}),
we write $ (\widehat Y^x, \hat \Delta^x)=(\widehat G_{1n}(\eta,x), \widehat G_{2n}(\eta, x)).$
%and  denote the distribution of $\widehat G_n(\eta, x)$ by $\widehat P_{(Y,\Delta)\mid X=x}.$
Let $\widehat H^x_n$ and $\widehat H^x_{1n}$ be the distribution function of $\widehat Y^x$ and
$(\widehat Y^x, \hat \delta^x=1)$, respectively, that is,
\begin{align}
\label{hHn}
1-\widehat H^x_n(t) = P(\widehat Y^x > t), \
\widehat H^x_{1n}(t)=P(\widehat Y^x \le t, \hat\Delta^x=1).
\end{align}
These are the counterparts of  $H^x$ and $H^x_1$ defined in
(\ref{Hdef1}), but with respect to the estimated conditional distribution
$\widehat P_{(Y, \Delta)|X=x}$ of $(Y,\Delta)$ given $X=x$.
Then by (\ref{cH2}) and (\ref{KM1}), we can estimate the conditional hazard and survival functions by
\begin{align}
\label{cH2hat}
\widehat \Lambda^x_n(t)=
%\int_{[0, t]} \frac{1-Q^x_{-}(u)}{(1-F^x_{-}(u))(1-Q_{-}^x(u))} dF^x(u)=
\int_{[0,t]} \frac{1}{1-\widehat H^x_{n-}(u)} d\widehat H^x_{1n}(u),
\end{align}
and
\begin{align}
\label{KM1hat}
\widehat S^x_n(t) = \prod_{u\in [0, t]} (1-\widehat \Lambda^x_n\{u\}) {\exp(-\widehat\Lambda^{xc}_n(t))}.
\end{align}
Although  the estimators in (\ref{cH2hat}) and (\ref{KM1hat}) do not have an analytical expression,
we can use $\widehat G_n(\cdot, x)$ to generate samples that are
distributed as  $\widehat P_{(Y, \Delta)|X=x}$ and obtain
Monte Carlo approximations of $\widehat{\Lambda}^x_n$ and $\widehat S^x_n$, as described below.

We first generate a random sample $\{\eta_j, j=1 \ldots, m\}$ that are i.i.d.  $P_{\eta}$ and compute
$\{\widehat G_n(\eta_j, x),  j=1,\ldots, m\}$ for a given $x \in \cX$, where $m \ge 1$ is
a positive integer.
We can set  $m$ as large as we like to achieve the desired precision but
within the available computing capability.
Denote $(\widehat Y^x_{nj}, \widehat \delta^x_{nj})=\widehat G_n(\eta_j, x), j=1, \ldots, m.$
The empirical version of
%{\color{blue}(\ref{ind1})
(\ref{hHn}) is
\begin{align}
\label{ind1e}
1-\widehat H^x_{nm}(t) = \frac{1}{m}\sum_{j=1}^m I( \widehat Y^x_{nj} > t), \
\widehat H^x_{1nm}(t)=\frac{1}{m}\sum_{j=1}^m I(\widehat Y^x_{nj} \le t, \widehat \delta^x_{nj}=1).
\end{align}
The Nelson-Aalan estimator \citep{
%nelson1969,
nelson1972, aalen1978}
 of the conditional cumulative hazards function of $T$ given $X=x$ is
\begin{align}
\label{cH2e}
\widehat\Lambda^x_{nm}(t)=
%\int_{[0, t]} \frac{1-Q^x_{-}(u)}{(1-F^x_{-}(u))(1-Q_{-}^x(u))} dF^x(u)=
\int_{[0,t]} \frac{1}{1-\widehat H^x_{nm-}(u)} d\widehat H^x_{1nm}(u).
\end{align}
This can be written more explicitly as
\begin{align*}
\widehat\Lambda^x_{nm}(t)= \sum_{j: \widehat Y^x_{(nj)}\le t}
\frac{\widehat \delta^x_{(nj)}}{m-j+1},
\end{align*}
where $\widehat Y^x_{(n1)} \le \cdots \le \widehat Y^x_{(nm)}$ are the ordered values
of $\widehat Y^x_{n1}, \ldots, \widehat Y^x_{nm}$ and $\widehat \delta^x_{(nj)}$ is the censoring indicator associated with $\widehat Y^x_{(nj)}.$
%In terms of the counting process notation, we can write the Nelson-Aalen estimator as follows.
%Let $N(t)=\sum_{j=1}^m1\{\widehat Y_{nj}^x \le t, \widehat \delta^x_{nj}=1\}$
%and $Y(t) =\sum_{j=1}^m 1\{\widehat Y_{nj} \ge t\}.$ Then,
%\begin{align*}
%\widehat\Lambda^x_{nm}(t)=\int_{[0, t]} \frac{1\{Y(t) >0\}}{Y(t)}dN(t).
%\end{align*}

The Kaplan-Meier product limit estimator \citep{kaplan1958} of the conditional survival function
of $T$ given $X=x$ is
%$S(t\mid x)=1-F(t\mid x)=\mathbb{P}(T > t|X=x)$ is
\begin{align}
\label{KM1e}
\widehat{S}_{nm}^x (t)
%= \prod_{j=1}^m \left(1-\frac{\delta^x_{(mj)}}{m-j+1}\right)^{1\{Y^x_{(mj)}\le t\}}
=\prod_{j: \widehat Y^x_{(nj)}\le t} \left(1-\frac{\widehat \delta^x_{(nj)}}{m-j+1}\right).
\end{align}

\section{Consistency}
%Convergence properties}
%

In this section, we study the convergence properties  of the proposed method.
For simplicity, we write the generator $G=(G_1, G_2)$, and the  generator network
% {\color{black}
$G_{\vtheta}=(G_{\vtheta_{1}}, G_{\vtheta_{2}})$.
% and the estimated generator network  $\tG_n=(G_{\hat{\vtheta}_{1n}}, %G_{\hat{\vtheta}_{2n}}).$	
We make the following assumptions.

\begin{assumption}\label{asp2}
	For some $\gamma>0$, $(X, Y)$ satisfies the first moment tail condition
	\begin{align*}
	%\Ebb\|Z\|\mathbbm{1}_{\{\|Z\|>\log t \}}= O(t^{-{(\log t)^{\delta}}/{(d+q)}}), \ \text{ for any } t > 1.
	P(\|(X, Y)\|
%+ {\color{blue}|Y|}
>t)=O(1)  t^{-1}\exp({-t^{1+\gamma}/(d+2)}),\quad t > 1,
	\end{align*}
	where $\|\cdot\|$ denotes the Euclidean norm.
\end{assumption}

\begin{assumption}\label{asp3}
	The noise distribution $P_{\eta}$ is absolutely continuous with respect to the Lebesgue measure.
	%\end{itemize}
\end{assumption}
\begin{assumption}\label{assume:truncation}
	Only the observations for which $Y_i$, $1\leq i \leq n$ is in the interval $[0, \tau]$ are used.
	There exists a small positive  constant $\delta$ such  that $P(Y> \tau\mid X)>\delta$ holds almost surely with respect to the probability of $X$.
\end{assumption}

Assumption \ref{asp2} is satisfied if $P_{X,Y}$ is subgaussian. Assumption \ref{asp3}
is satisfied by commonly used reference distributions such as normal and uniform distributions.
Assumption \ref{assume:truncation} is commonly assumed in the literature of survival analysis, see, for example, \citet{ag1982}.

To describe the convergence properties, we need the notion of
%For two probability distributions $\mu$ and $\nu$,
the integral probability metric (IPM, \citet{muller1997}) between two probability distributions
$\mu$ and $\nu$ with respect to a symmetric evaluation class
$\cF$, defined by
$$
d_{\cF}(\mu, \nu)=\sup_{f \in \cF}\{E_{\mu} f-E_{\nu} f\}.
$$
By specifying the evaluation function class $\mathcal{F}$, we
obtain different metrics for the space of probability distributions. For example,
\begin{itemize}
	\item $\mathcal{F}=$   bounded Lipschitz function class $: d_{\mathcal{F}}=d_{BL}$,
	this yields the bounded Lipschitz
	% (or Dudley)
	metric \citep{dudley2018real}. This metric
	characterizes weak convergence, that is, a sequence of probability measures $\{\mu_n\}$ converges
	to $\mu$ if and only if $d_{\cF}(\mu_n, \mu) \to 0$.
	\item $\mathcal{F}= $ Lipschitz function class. This results in the Wasserstein distance $d_{\mathcal{F}}=d_{W_1}$, which is used in the training of Wasserstein GAN \citep{arjovsky17}.
\end{itemize}

Denote the joint distribution of
$(X, G(\eta,X))\equiv (X, G_1(\eta, X), G_2(\eta, X))$ by $P_{X, G}.$
We consider the convergence of the conditional generator with respect to
the bounded Lipschitz metric
\[
d_{\cF^1_B}(P_{X, G}, P_{X,Y, \Delta}) = \sup_{f \in \cF^1_B} \{\Ebb_{(X, \eta)\sim P_X P_{\eta}}
f(X, G(\eta, X)) - \Ebb_{(X, Y, \Delta)\sim P_{X,Y, \Delta}} f(X, Y, \Delta)\},
\]
where $\cF^1_B$ is the uniformly bounded 1-Lipschitz function class,
\begin{align*}
\mathcal{F}^1_B=\Big\{f:  & \mathbb{R}^{d+1} \times [0,1] \mapsto \mathbb{R},
% \mid
|f(z_1)-f(z_2)|\leq \|z_1-z_2\| , z_1, z_2 \in  \mathbb{R}^{d+1}\times [0,1],
% \text{ and }
 \|f\|_{\infty}\leq B\Big \}
\end{align*}
for some constant $0 < B < \infty$. If $P_{X,Y,\Delta}$ has a bounded support, then $d_{\text{BL}}$ is essentially the same as the 1-Wasserstein distance.

Both the generator and the discriminator are assumed to be implemented by the  feedforward  ReLU neural networks, which can be represented in the form of
$g=g_{L}\circ g_{L-1}\circ\cdots\circ g_{0},$
where $g_{i}(x)=\sigma(V_ix+b_i)$  and  $g_{L}(x)=V_{L}x+b_{L}$, with  the weight matrices $V_{i}\in\real^{d_{i+1}\times d_{i}}$, the bias vectors $b_{i}\in\real^{d_{i+1}\times 1}$  for $i=0,\dots, L$, and $\sigma$  the ReLU activation function.
% that is applied componentwise. It follows that
So the network is parameterized by $(V_0,\dots, V_{L}, b_{0},\dots, b_{L})$ with  depth (number of hidden layers) $L$ and  width   (the maximum width of all hidden layers)  $W=\max\{d_1,\dots, d_{L}\}$.  Let $(L_1, W_1)$ be the depth and width of
%the feedforward ReLU
the discriminator network $D_{\vphi}$ and let $(L_2,W_2)$ be the depth and width of
%the feedforward ReLU
the generator network $G_{\vtheta}$.

Denote the parameter of the conditional generator   by $\vtheta_1=(V_{1,0},\dots, V_{1,L_2},b_{1,0},\dots, b_{1,L_2})$ and $\vtheta_2=(V_{2,0},\dots, V_{2,L_2},b_{2,0},\dots, b_{2,L_2})$.
For  a matrix $V=(v_{i,j}),$
%a  matrix in $\real^{p_1\times p_2}$, we
 let $\|V\|_{2}=\sup_{\|x\|=1}\|Vx\|$ be the 2-norm of $V$.
 {\color{black} Recall that the conditional generator $\hG_n$ is defined in (\ref{3}).}
 % {\color{blue}The randomness from $\hG_n$ is induced from $\tG_n$. }
% and let $\|W\|_{\infty}=\max_{i=1,\dots, p_1}\sum_{j=1}^{p_2}|w_{i,j}|$ be
%the $\infty$-norm of $W$.

\begin{theorem}	\label{lemm2a}
%	Suppose the assumptions and conditions of Lemma \ref{thm1} hold.
Let  $(L_1, W_1)$ of $D_{\vphi}$ and $(L_2, W_2)$ of $G_{\vtheta}$ be specified such that
	$W_1 L_1=\ceil*{\sqrt{n}}$ and $W_2^2 L_2= c q n $ for a constant $12\leq c\leq 384$.
	Suppose that  Assumptions \ref{asp2} and \ref{asp3} hold and
%	the {\color{blue}  generator network} satisfies
$\|G_{\vtheta}\|_{\infty} \le 1+\log n.$
Also, suppose that the weight matrices satisfy
%the {\color{blue}  generator network}  satisfies
$\prod_{j=0}^{L_2}\|V_{i,j}\|_2\leq K_1$, $i=1,2$, for some positive constant $K_1$. Then, we have
	%Let  $(L_1, W_1)$ of $D_{\vphi}$ and $(L_2, W_2)$ of $G_{\vtheta}$ be specified such that
	%$W_1 L_1=\ceil*{\sqrt{n}}$ and $W_2^2 L_2= c q n $ for some constants $12\leq c\leq 384$.
	%Suppose that Assumptions \ref{asp1} and \ref{asp2} hold and
	% the conditional generator satisfies $\|G_{\vtheta}\|_{\infty} \le 1+\log n.$
	%Then,  we have
	\begin{equation}
	\label{ProConverge1}
	d_{\cH^1_B}(P_{\tG_n(\eta,X)}, P_{(Y,\Delta)|X})=\sup_{h\in \cH^1_B}\Ebb_{\eta}(h(\tG_n(\eta, X)))-\Ebb(h(Y,\Delta)|X)\to_P 0,
	%\le  C_0 (d+q)^{{1}/{2}} n^{-{1}/{(d+2)}}\log n,
	% \to 0, \ \text{ as } n \to \infty
	%=\inf_{\gamma_x}\int\|(x,\hG)-(x,Y,\Delta)\|d\gamma_x.
	\end{equation}
where $P_{\tG_n}$ represents the probability with respect to the randomness in $\tG_n.$
	%for almost all $x \in \cX$
	%where $C_0$ is a constant independent of $(n, d)$.
	%\end{theorem}
	Let $r_n= (d+2)^{{1}/{2}} n^{-{1}/{(d+2)}}\log n$, we have
	\begin{equation}\label{Converge2}
	\Ebb_{\tG_n}\big\{\Ebb_X(d_{\cH^1_B}(P_{\tG_n(\eta,X)}, P_{(Y,\Delta)|X}) )\big\}
	\lesssim    \{\log r_n^{-1}\}^{-1/2}\{\log\{\log r_n^{-1}\}\}^{1/2},
	\end{equation}
where $\Ebb_{\tG_n}$ means the expectation with respect to the randomness in $\tG_n.$
\end{theorem}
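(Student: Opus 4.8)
The plan is to decompose the error into a stochastic (estimation) part and a deterministic (approximation) part, then control each with the tools already available in the literature on Wasserstein generative learning. First I would observe that $\tG_n = (G_{\hat\vtheta_{1n}}, G_{\hat\vtheta_{2n}})$ is the minimizer of the empirical minimax objective $\cL_n$ over the neural network classes, and that the target distribution matching is controlled by the bounded-Lipschitz IPM $d_{\cF^1_B}(P_{X,G}, P_{X,Y,\Delta})$. Since $P_{X,Y,\Delta}$ has (after truncation by Assumption~\ref{assume:truncation}) effectively bounded support in the relevant coordinates and $\|G_\vtheta\|_\infty \le 1 + \log n$, the bounded-Lipschitz metric and the $1$-Wasserstein metric coincide up to constants, so it suffices to bound $d_{W_1}(P_{X,\tG_n}, P_{X,Y,\Delta})$. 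Then I would also need the standard reduction that control of the \emph{joint} IPM implies control of the expected \emph{conditional} IPM, $\Ebb_X d_{\cF^1_B}(P_{\tG_n(\eta,X)}, P_{(Y,\Delta)|X}) \lesssim d_{\cF^1_B}(P_{X,\tG_n}, P_{X,Y,\Delta})$, which follows from the fact that $X$ appears as a Lipschitz argument in the discriminator; this is exactly the kind of argument in \cite{lzjh2021}.

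Next, I would carry out the error decomposition
\begin{align*}
d_{W_1}(P_{X,\tG_n}, P_{X,Y,\Delta}) \le \underbrace{\inf_{G_\vtheta} d_{W_1}(P_{X,G_\vtheta}, P_{X,Y,\Delta})}_{\text{approximation error}} + \underbrace{2\sup_{G_\vtheta}\bigl| d_{\cF^1_B}(P_{X,G_\vtheta}, P_{X,Y,\Delta}) - \sup_{D_\vphi}\cL_n(G_\vtheta, D_\vphi) \bigr|}_{\text{discriminator approx. + statistical error}},
\end{align*}
where the second term further splits into (i) the discriminator approximation error, i.e.\ how well a ReLU network of size $W_1 L_1 = \lceil\sqrt n\rceil$ approximates an arbitrary $1$-Lipschitz bounded function uniformly over the (truncated, hence compact) domain, and (ii) the empirical process / generalization error $\sup_{G_\vtheta, D_\vphi}|\cL_n(G_\vtheta,D_\vphi) - \cL(G_\vtheta,D_\vphi)|$, handled via Rademacher complexity bounds for ReLU networks with the norm constraint $\prod_j \|V_{i,j}\|_2 \le K_1$. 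For the generator approximation error I would invoke the transport/coupling construction: since $P_\eta$ is absolutely continuous (Assumption~\ref{asp3}) it can be pushed forward onto $P_{(Y,\Delta)|X=x}$ measurably in $x$, and a ReLU network of width/depth satisfying $W_2^2 L_2 = cqn$ approximates this push-forward map in the relevant $W_1$ sense with error of order $n^{-1/(d+2)}$ (up to logs), which is where the ambient dimension $d$ and the rate $r_n = (d+2)^{1/2} n^{-1/(d+2)}\log n$ enter. The moment tail Assumption~\ref{asp2} is used to truncate the unbounded $Y$-coordinate at level $\sim \log n$ while paying only a negligible price, matching the condition $\|G_\vtheta\|_\infty \le 1+\log n$.

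Combining the pieces, the statistical error is $O_P(n^{-1/2}\,\mathrm{polylog}\,n)$ and the discriminator approximation error is of comparable order by the choice $W_1 L_1 = \lceil\sqrt n\rceil$, so the dominant term is the generator approximation error $\asymp r_n = (d+2)^{1/2} n^{-1/(d+2)}\log n$; this gives $\Ebb_{\tG_n}\Ebb_X d_{\cF^1_B} \lesssim r_n$. The final step is the slightly stronger-looking bound $(\log r_n^{-1})^{-1/2}(\log\log r_n^{-1})^{1/2}$ on the right-hand side of \eqref{Converge2}: I expect this comes not from the joint-distribution matching rate $r_n$ itself, but from a second stage where one must convert convergence of $(Y^x,\Delta^x)$ in distribution into uniform convergence of the induced Kaplan--Meier / Nelson--Aalen functionals and then back, or more likely from the need to control the discriminator over a function class whose complexity grows with the (logarithmically diverging) diameter of the support — the $\sqrt{\log\log}$ factor is the signature of a maximal-inequality / chaining bound over such a slowly growing class. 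Convergence in probability \eqref{ProConverge1} then follows from \eqref{Converge2} by Markov's inequality. The main obstacle I anticipate is precisely the bookkeeping in this last step: carefully propagating the truncation level, the norm constraint $K_1$, and the diverging sup-norm bound $1+\log n$ through the Rademacher/chaining estimates so that all the logarithmic factors assemble into the stated $(\log r_n^{-1})^{-1/2}(\log\log r_n^{-1})^{1/2}$ form, rather than a cruder polylogarithmic bound.
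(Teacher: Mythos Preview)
Your decomposition essentially reproduces what the paper states as a preliminary lemma (cited from \cite{lzjh2021}): the \emph{joint}-distribution bound $\Ebb_{\tG_n} d_{\cF^1_B}(P_{X,\tG_n}, P_{X,Y,\Delta}) \lesssim r_n$. The actual content of Theorem~\ref{lemm2a} is the passage from this joint bound to the \emph{conditional} bound, and your claim that this is a ``standard reduction'' that ``follows from the fact that $X$ appears as a Lipschitz argument in the discriminator'' is the gap. For an IPM one does not have $\Ebb_X \sup_{h\in\cH^1_B}\{\cdots\} \le \sup_{f\in\cF^1_B}\{\cdots\}$ in general, because the optimal test function $h_x$ at each $x$ need not glue into a function Lipschitz in $x$. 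The coupling inequality you may be thinking of is specific to the $W_1$ primal formulation; the available joint bound here is in the bounded-Lipschitz metric, and since the $X$-marginal is unbounded (Assumption~\ref{asp2} gives only a tail condition, and Assumption~\ref{assume:truncation} truncates $Y$, not $X$), the two do not coincide on the joint.

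The paper's argument proceeds by a different mechanism and uses the one hypothesis you never invoked: the constraint $\prod_j\|V_{i,j}\|_2\le K_1$ makes $\tG_n$ itself Lipschitz, so for each fixed $h\in\cH^1_B$ the map $x\mapsto \hat H_n(x):=\Ebb_\eta h(\tG_n(\eta,x))$ is $K$-Lipschitz. A compactness argument (Prohorov plus Arzel\`a--Ascoli) shows that $H^\star(x):=\Ebb(h(Y,\Delta)\mid X=x)$ is approximable in $L^2(P_X)$ by $K$-Lipschitz functions; testing the joint IPM against $\hat H_n$ and against these approximants gives $\Ebb_X(\hat H_n(X)-H^\star(X))^2\lesssim c_n$ on the high-probability event $\{d_{\cF^1_B}\le c_n\}$. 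One then covers $\cH^1_B$ restricted to a box of radius $b_n$ (covering number $\exp(Cb_n/\varepsilon^2)$), applies Chebyshev and a union bound, and optimizes over $(c_n,b_n,\varepsilon)$. This Chebyshev-plus-covering step is precisely where the rate degrades from $r_n$ to $(\log r_n^{-1})^{-1/2}(\log\log r_n^{-1})^{1/2}$; note that this bound is much \emph{weaker} than $r_n$ (logarithmic versus polynomial decay), not ``slightly stronger-looking'' as you wrote, and it has nothing to do with Kaplan--Meier functionals.
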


In the theorem,  we restrict the value of $\|G_{\vtheta}\|$ to  $[0, 1+\log n].$ This
%s a technical
requirement can be automatically satisfied by using a clipping layer $\ell$ as the output layer of the network \citep{lzjh2021},
%\begin{align*}
$
\ell(a)=a\wedge c_{n} \vee (-c_{n})=\sigma(a+c_{n})-\sigma(a-c_{n})-c_{n},
$
%\end{align*}
where $c_{n}=1+\log n.$ In addition, we restrict the parameters of $G_{\vtheta}$ to satisfy $\prod_{j=0}^{L_2}\|V_{i,j}\|_2\leq K_1$ for $i=1,2$, from which we guarantee   $\|G_{\vtheta}\|_{\operatorname{Lip}}\leq \sqrt{2}K_1$.
The condition can  be  satisfied when only a small number of  parameters have large value, and similar conditions are assumed in the literature on WGAN (e.g., \citep{arjovsky17,
%,Anil2019,
Biau2021, BiauWGAN2021}. In practice, different ways have been explored to restrict the Lipschitz constant of the network, including weight clipping  \citep{arjovsky17} and
gradient regularization \citep{Gulrajani2017}.

%We derive the following result based on the  result \eqref{ProConverge1}   that %$P_{\tG_n(\eta,X)}$ converges  in distribution  to $P_{(Y,\Delta)|X}$ in probability.

The following theorem establishes the consistency of $\widehat \Lambda^x_n$
and $\widehat S^x_n$ in an appropriate sense.
\begin{theorem}
	\label{thm2}
	Suppose Assumptions \ref{asp1} -- \ref{assume:truncation} are satisfied and the  conditions of Theorem  \ref{lemm2a} hold. Then, we have for every $\varepsilon>0$,
	\begin{align}
	\label{HazardConverge}
	%		\sup_{0\le t \le \tau} |\widehat \Lambda^x_n(t) -\Lambda^x (t)| \to_P  0,
&	\Ebb_{X}(P_{\tG_n}(	\sup_{0\le t \le \tau} |\widehat \Lambda^X_n(t) -\Lambda^X (t)|>\varepsilon))\rightarrow 0, \\
% \ \text{ and } \
%\end{align}
%	and
%\begin{align}
	\label{SurvivalConverge}
	%	\sup_{0 \le t < \tau} |\widehat S^x_n(t)-S^x(t)| \to_P   0
&	\Ebb_{X}(P_{\tG_n}(		\sup_{0 \le t < \tau} |\widehat S^X_n(t)-S^X(t)| >\varepsilon))\rightarrow 0
	\end{align}
	as $n \to \infty$.
%Here $P_{\tG_n}$ represents the probability with respect to the randomness in $\tG_n.$
	%for almost all $x \in \cX$
	%  in probability with respect to $P_X$.
\end{theorem}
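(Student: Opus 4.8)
The plan is to express $\widehat\Lambda^x_n,\widehat S^x_n$ and their targets $\Lambda^x,S^x$ as values of \emph{fixed} functionals evaluated at probability measures, and then push the distributional convergence from Theorem~\ref{lemm2a} through these functionals by a continuous‑mapping argument. Write $P^x:=P_{(Y,\Delta)\mid X=x}$ and $\widehat P^x_n:=P_{\hG_n(\eta,x)}$, the conditional law produced by the thresholded generator in (\ref{3}). Under Assumption~\ref{asp1}, (\ref{cH2}) and (\ref{KM1}) state precisely that $\Lambda^x=\Phi_1(P^x)$ and $S^x=\Phi_2(\Phi_1(P^x))$, where $\Phi_1(\mu)(t)=\int_{[0,t]}(1-H_{\mu,-})^{-1}\,dH_{\mu,1}$ is the Nelson--Aalen map (with $H_\mu$ and $H_{\mu,1}$ the distribution function and the $\{\delta=1\}$--subdistribution function of $\mu$) and $\Phi_2(A)(t)=\prod_{u\in[0,t]}(1-dA)$ is the product‑integral map; by (\ref{cH2hat}) and (\ref{KM1hat}) the same formulas give $\widehat\Lambda^x_n=\Phi_1(\widehat P^x_n)$ and $\widehat S^x_n=\Phi_2(\Phi_1(\widehat P^x_n))$. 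So it suffices to (a) transfer the convergence in Theorem~\ref{lemm2a} from $\tG_n$ to $\widehat P^x_n$, and (b) show $\Phi_1$ and $\Phi_2\circ\Phi_1$ are continuous at $P^x$, for $P_X$‑a.e.\ $x$, in the bounded‑Lipschitz‑to‑sup‑norm sense.

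For (a) I would first absorb the clipping. Using $|I\{a\ge 1/2\}-a|=\min(a,1-a)$ for $a\in[0,1]$, for every $f\in\cF^1_B$ one has $|f(\hG_n(\eta,x))-f(\tG_n(\eta,x))|\le\min\bigl(G_{\hat{\vtheta}_{2n}}(\eta,x),\,1-G_{\hat{\vtheta}_{2n}}(\eta,x)\bigr)$; taking $\Ebb_\eta$, then the supremum over $f$, and then testing $d_{\cF^1_B}(P_{\tG_n(\eta,x)},P^x)$ against the $1$‑Lipschitz, $B$‑bounded function $(y,\delta)\mapsto\min(\delta,1-\delta)$ (which vanishes on $\mathrm{supp}\,P^x\subseteq\R^+\times\{0,1\}$) yields
\[
d_{\cF^1_B}(\widehat P^x_n,P^x)\ \le\ 2\,d_{\cF^1_B}\bigl(P_{\tG_n(\eta,x)},P^x\bigr),\qquad x\in\cX .
\]
Hence (\ref{Converge2}) gives $\Ebb_{\tG_n}\Ebb_X\,d_{\cF^1_B}(\widehat P^X_n,P^X)\to 0$; in particular $d_{\cF^1_B}(\widehat P^X_n,P^X)\to 0$ in $L^1(P_X\times P_{\tG_n})$, hence in probability.

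For (b), fix $x$ with $P(Y>\tau\mid X=x)\ge\delta$ (Assumption~\ref{assume:truncation}), so $1-H^x_-(u)\ge\delta$ on $[0,\tau]$ and $\Phi_1(P^x)$ is finite there with total variation at most $\delta^{-1}$; the same bound holds for $\widehat\Lambda^x_n$ once $1-\widehat H^x_n(\tau)\ge\delta/2$, which occurs along the convergence. If $\mu_k\to P^x$ weakly (equivalently $d_{\cF^1_B}(\mu_k,P^x)\to0$), then $H_{\mu_k}\to H^x$ and $H_{\mu_k,1}\to H^x_1$ at continuity points of $H^x$; after a Pólya‑type upgrade to \emph{uniform} convergence on $[0,\tau]$, the standard Volterra/Duhamel stability of the Nelson--Aalen and product‑integral functionals (Hadamard differentiability of these maps from pairs of subdistribution functions, in sup‑norm, on the region where the denominator is bounded away from $0$; see Section~20 of \citet{vaart2000asymptotic}) yields $\|\Phi_1(\mu_k)-\Lambda^x\|_{\infty,[0,\tau]}\to0$ and $\|\Phi_2(\Phi_1(\mu_k))-S^x\|_{\infty,[0,\tau)}\to0$. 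Thus for each $\varepsilon>0$ there is a continuity radius $\rho_x>0$ with $d_{\cF^1_B}(\mu,P^x)<\rho_x\Rightarrow\max\{\|\Phi_1(\mu)-\Lambda^x\|_{\infty,[0,\tau]},\ \|\Phi_2(\Phi_1(\mu))-S^x\|_{\infty,[0,\tau)}\}\le\varepsilon$.

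Finally, to reach (\ref{HazardConverge})--(\ref{SurvivalConverge}): fix $\varepsilon>0$; the sets $A_\rho$ of $x$ for which the radius above may be taken equal to $\rho$ increase to $\cX$ as $\rho\downarrow0$, so given $\zeta>0$ choose $\rho$ with $P_X(A_\rho)\ge1-\zeta$. On $A_\rho$, the event $\{\sup_{0\le t\le\tau}|\widehat\Lambda^X_n(t)-\Lambda^X(t)|>\varepsilon\}$ is contained in $\{d_{\cF^1_B}(\widehat P^X_n,P^X)\ge\rho\}$, so conditioning on $X$, Markov's inequality, Fubini's theorem, and step (a) give $\Ebb_X(P_{\tG_n}(\sup_{0\le t\le\tau}|\widehat\Lambda^X_n(t)-\Lambda^X(t)|>\varepsilon))\le\zeta+\rho^{-1}\Ebb_{\tG_n}\Ebb_X d_{\cF^1_B}(\widehat P^X_n,P^X)\to\zeta$; letting $\zeta\downarrow0$ gives (\ref{HazardConverge}), and the identical argument with $\Phi_2\circ\Phi_1$ in place of $\Phi_1$ gives (\ref{SurvivalConverge}). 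The hard part will be the weak‑to‑uniform upgrade inside (b): weak convergence of $\widehat P^x_n$ only supplies convergence of the subdistribution functions at continuity points, so the passage to sup‑norm convergence of $\widehat\Lambda^x_n$ needs either the (standard in survival analysis) continuity of the conditional law of $Y$ given $X$ on $[0,\tau]$ together with Pólya's theorem, or a direct argument that the atoms of $\widehat H^x_n$ converge to those of $H^x$ --- and it must hold for $P_X$‑a.e.\ $x$ with the single constant $\delta$ of Assumption~\ref{assume:truncation} uniformizing the denominators $1-\widehat H^x_{n-}$. The remaining ingredients --- the thresholding bound, the Volterra stability estimates, and the measurability needed for Fubini and Markov --- are routine.
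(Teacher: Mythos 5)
Your reduction to distributional convergence plus continuity of the Nelson--Aalen and product--integral maps is the same high-level route the paper takes, and step (a) --- the quantitative bound $d_{\cF^1_B}(\widehat P^x_n,P^x)\le 2\,d_{\cF^1_B}(P_{\tG_n(\eta,x)},P^x)$ via the test function $(y,\delta)\mapsto\min(\delta,1-\delta)$ --- is a nice, more explicit replacement for the paper's subsequence$+$continuous-mapping argument (with $f(y,\delta)=(y,1\{\delta\ge 0.5\})$) that converts $\tG_n$ into the thresholded $\hG_n$. Step (b) for \emph{continuous} $H^x,H^x_1$ also matches the paper's Lemma~\ref{lem2a} and its invocation of the extended continuous mapping theorem and Hadamard differentiability of $(H,H_1)\mapsto\Lambda$ and of the product integral (Lemma~20.14 of \citet{vaart2000asymptotic}), with the denominator kept away from $0$ on $[0,\tau]$ by Assumption~\ref{assume:truncation}.

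The genuine gap is exactly the ``hard part'' you flag and then defer: Theorem~\ref{thm2} does \emph{not} assume that $F^x$ or $Q^x$ is continuous, and most of the paper's proof is devoted to removing that assumption. Neither of your two proposed escapes works as stated. Assuming continuity of the conditional law proves a weaker theorem than the one in the statement. And ``a direct argument that the atoms of $\widehat H^x_n$ converge to those of $H^x$'' is not available from bounded-Lipschitz (weak) convergence alone: weak convergence does not control atom masses, and the functional $H\mapsto \int (1-H_-)^{-1}\,dH_1$ is not continuous at a law with atoms in the weak topology, so the $\rho_x$ you introduce can be zero on a set of $x$ of positive $P_X$-measure. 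The paper's actual device is Major's (1988) continuous embedding: one replaces the data by $(\tilde X_i,\tilde Y_i,\tilde\Delta_i)=\lambda(X_i,Y_i,\Delta_i,\varepsilon_i)$ with an external uniform $\varepsilon_i$ to spread out each atom, so that the transformed conditional laws $\tilde F^x,\tilde Q^x$ are continuous; then the key step --- which has no analogue in your sketch --- is to show that post-composing the first coordinate of the trained $\tG_n$ with a monotone piecewise-linear map $\tilde h_X$ (realizable as a ReLU network by Lemma~\ref{lem:piecewise}) yields a new generator $\bar G_n$ for which $\sup_\phi L_n(\bar G_n,D_\phi)=0$ with high probability, so that Lemma~\ref{lem1a} and Theorem~\ref{lemm2a} still apply to $\bar G_n$; finally, one identifies $\widehat H^x_n$ and $\widehat H^x_{1n}$ with $\tilde H^x_n\circ h_x$ and $\tilde H^x_{1n}\circ h_x$ (up to jump points, where one uses the right limit) and pulls the continuous-case result back through $h_x$. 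Without this re-training-on-embedded-data argument, your proof establishes the theorem only under an extra continuity hypothesis that the statement does not impose.
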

Theorem \ref{thm2} shows that the estimated conditional hazard function $\widehat \Lambda^x_n$
and the estimated conditional survival function $\widehat S^x_n$ are consistent in expectation with
to the distribution of $X$.
%Here results \eqref{HazardConverge} and \eqref{SurvivalConverge} mean  that for any  $\varepsilon>0$, $\Ebb_{X}(P_{\hat{G}_n}(	\sup_{0\le t \le \tau} |\widehat \Lambda^x_n(t) -\Lambda^x (t)|>\varepsilon))\rightarrow 0$   and $\Ebb_{X}(P_{\hat{G}_n}(		\sup_{0 \le t < \tau} |\widehat S^x_n(t)-S^x(t)| >\varepsilon))\rightarrow 0$.
%

%The following lemma is   derived  based on the  methods of \cite{Foldes1981} and \cite{Major1988}.

We now turn to the convergence properties of the estimators $\hat{\Lambda}_{nm}^x$ and $\hat{S}_{nm}^x$, which are computed by sampling from the conditional generator $\hG_n.$
We first establish the following lemma that gives the convergence rate of $\hat{\Lambda}_{nm}^x$ and $\hat{S}_{nm}^x$ in terms of the size of the generated conditional samples.
%establishes the convergence rate of the Monte Carlo estimators
%$\hat{\Lambda}_{nm}^x$ and $\hat{S}_{nm}^x.$
\begin{lemma}
	\label{lem:KM}	For given conditional generator $\hat{G}_n$ and $x\in\cX$, let $\tau_n^x $ be the point such   that $ 1-\hat{H}_n^x(\tau_n^x) >\nu$   and $\hat{H}_n^x(\tau_n^x) >1/2$ for a small positive constant $\nu$. Then we have
%	\begin{align*} %&
\begin{align*}
&P_{\eta}\big(\sup_{t\in[0,\tau^x_n]}|\hat{\Lambda}_{nm}^x(t)-\hat{\Lambda}_n^x(t)|=O(\sqrt{ \log m/ m })  \big)=1, \text{ and } \\
&P_{\eta}\big(\sup_{t\in[0,\tau^x_n]}|\hat{S}_{nm}^x(t)-\hat{S}_n^x(t)|=O(\sqrt{ \log m/ m })   \big)=1,
\end{align*}
%	\end{align*}
	where $P_{\eta}$ represents the probability with respect to the randomness in $\eta$.
\end{lemma}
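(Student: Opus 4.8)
The plan is to work conditionally on the fixed generator $\hat G_n$ and the fixed $x\in\cX$, so that the only source of randomness is the i.i.d.\ sample $\eta_1,\dots,\eta_m\sim P_\eta$, or equivalently the i.i.d.\ pairs $(\hat Y^x_{nj},\hat\delta^x_{nj})=\hat G_n(\eta_j,x)$ with common distribution $\hat P_{(Y,\Delta)\mid X=x}$. Under this conditioning $\hat H^x_{nm}$ and $\hat H^x_{1nm}$ in (\ref{ind1e}) are honest empirical (sub)distribution functions with population means $\hat H^x_n$ and $\hat H^x_{1n}$ from (\ref{hHn}). The first step is to control the two empirical processes uniformly in $t$. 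For $\hat H^x_{nm}-\hat H^x_n$ this is the Dvoretzky--Kiefer--Wolfowitz inequality, $P_\eta(\sup_t|\hat H^x_{nm}(t)-\hat H^x_n(t)|>\varepsilon)\le 2e^{-2m\varepsilon^2}$; for $\hat H^x_{1nm}-\hat H^x_{1n}$ a bound of the same form $C_1 e^{-C_2 m\varepsilon^2}$ holds because $\{(-\infty,t]\times\{1\}:t\in\R\}$ is a VC class (equivalently, $\hat H^x_{1nm}$ is the empirical c.d.f.\ of the uncensored draws weighted by their empirical proportion). Taking $\varepsilon=\varepsilon_m=c\sqrt{\log m/m}$ with $c$ large enough that $\sum_m e^{-2m\varepsilon_m^2}=\sum_m m^{-2c^2}<\infty$ and applying Borel--Cantelli, we get that, with $P_\eta$-probability one, there exists $m_0$ (depending on the sample path) with
\[
\sup_{t}|\hat H^x_{nm}(t)-\hat H^x_n(t)|\ \vee\ \sup_{t}|\hat H^x_{1nm}(t)-\hat H^x_{1n}(t)|\ \le\ c\sqrt{\tfrac{\log m}{m}},\qquad m\ge m_0.
\]

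Next I would propagate this through the Nelson--Aalen map. Enlarging $m_0$ so that $c\sqrt{\log m/m}<\nu/2$, the defining property $1-\hat H^x_n(\tau^x_n)>\nu$ together with monotonicity gives $1-\hat H^x_{nm}(t)\ge 1-\hat H^x_n(t)-\nu/2>\nu/2$ for all $t\in[0,\tau^x_n]$, so both $u\mapsto(1-\hat H^x_{n-}(u))^{-1}$ and $u\mapsto(1-\hat H^x_{nm-}(u))^{-1}$ are bounded by $2/\nu$ on $[0,\tau^x_n]$ and, being nondecreasing there, have total variation at most $2/\nu$. Writing, for $t\le\tau^x_n$,
\begin{align*}
\hat\Lambda^x_{nm}(t)-\hat\Lambda^x_n(t)
&=\int_{[0,t]}\frac{d(\hat H^x_{1nm}-\hat H^x_{1n})(u)}{1-\hat H^x_{nm-}(u)}\\
&\quad+\int_{[0,t]}\Bigl(\frac{1}{1-\hat H^x_{nm-}(u)}-\frac{1}{1-\hat H^x_{n-}(u)}\Bigr)\,d\hat H^x_{1n}(u),
\end{align*}
I would bound the second integral by $\tfrac{4}{\nu^2}\sup_t|\hat H^x_{nm}(t)-\hat H^x_n(t)|$ (pointwise bound on the integrand) times the total mass $\le 1$ of $\hat H^x_{1n}$, and the first integral, after integration by parts (Fubini for Lebesgue--Stieltjes integrals), by $\sup_u|\hat H^x_{1nm}(u)-\hat H^x_{1n}(u)|$ times the sum of the supremum and the total variation of $(1-\hat H^x_{nm-})^{-1}$ on $[0,t]$, i.e.\ by $\tfrac{4}{\nu}\sup_u|\hat H^x_{1nm}(u)-\hat H^x_{1n}(u)|$. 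Combining with the display above yields $\sup_{t\in[0,\tau^x_n]}|\hat\Lambda^x_{nm}(t)-\hat\Lambda^x_n(t)|=O(\sqrt{\log m/m})$ with $P_\eta$-probability one, the first assertion.

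For the Kaplan--Meier part I would invoke the Lipschitz continuity of the product-integral map. On $[0,\tau^x_n]$ both $\hat\Lambda^x_n$ and $\hat\Lambda^x_{nm}$ are nondecreasing with total variation bounded — $\hat\Lambda^x_n(\tau^x_n)\le\hat H^x_{1n}(\tau^x_n)/\nu\le 1/\nu$, and likewise for $\hat\Lambda^x_{nm}$ once $m\ge m_0$ — and their jumps are strictly below $1$ (a unit jump would empty the risk set, impossible since $1-\hat H^x_{nm}(\tau^x_n)>\nu/2$). Hence, by the continuity of $\Lambda\mapsto\prod_{u}(1-d\Lambda(u))$ in the supremum norm on such a set (product-integral / Duhamel-equation arguments in the spirit of Gill and Johansen; cf.\ the material around Lemma~25.74 in \citet{vaart2000asymptotic}),
\[
\sup_{t\in[0,\tau^x_n]}|\hat S^x_{nm}(t)-\hat S^x_n(t)|\ \lesssim\ e^{1/\nu}\,\sup_{t\in[0,\tau^x_n]}|\hat\Lambda^x_{nm}(t)-\hat\Lambda^x_n(t)|=O\!\left(\sqrt{\tfrac{\log m}{m}}\right)
\]
with $P_\eta$-probability one, the second assertion.

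The main obstacle, I expect, is the bookkeeping in these propagation steps rather than any deep estimate: one must first secure, on a \emph{single} almost-sure event, that the empirical risk set $1-\hat H^x_{nm}$ stays bounded away from zero throughout $[0,\tau^x_n]$ (this is precisely where the definition of $\tau^x_n$ is used), since only then do the Lebesgue--Stieltjes bounds and the product-integral continuity estimate hold with deterministic constants. The DKW-type control of the \emph{subdistribution} function $\hat H^x_{1nm}$ (as opposed to an ordinary empirical c.d.f.) and the care with left/right limits and atoms in the integration by parts are comparatively routine.
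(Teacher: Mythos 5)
Your proof is correct, but it proceeds along a genuinely different route from the paper's. The paper works directly with the tick-marked Kaplan--Meier formula built from the generated order statistics: it decomposes $\log\hat S_{nm}^x - \log\hat S_n^x$ into three explicit remainder terms (a mean-zero sum controlled by Hoeffding's lemma, a Taylor-expansion error in $\log(1-x)+x$, and a ``risk set vs.\ true tail'' discrepancy), bounds each via Hoeffding, Bernstein, and Massart's sharp DKW, then reaps the Borel--Cantelli conclusion, and finally handles possible atoms of $\hat H_n^x$ by invoking Major's randomization device (adding small uniform perturbations to break ties) before deducing the $\hat\Lambda_{nm}^x$ statement from Hadamard differentiability of the product integral. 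You instead work entirely at the level of the empirical sub-distribution functions: DKW and a VC-type refinement give uniform almost-sure $O(\sqrt{\log m/m})$ control of $\hat H^x_{nm}-\hat H^x_n$ and $\hat H^x_{1nm}-\hat H^x_{1n}$, and you then push this through the Nelson--Aalen integral map by a telescoping decomposition and Lebesgue--Stieltjes integration by parts, and finally through the product-integral map by its local Lipschitz property on increasing functions with bounded total variation and jumps bounded below $1$. Your route is more modular and arguably cleaner on one point: because DKW, the Nelson--Aalen integral representation in \eqref{cH2e}, and the product-integral formula in \eqref{KM1hat} all hold for arbitrary (possibly atomic) sub-distribution functions, you do not need the Major-style continuity reduction at all --- the potential ties in the generated sample are handled automatically once one works with $\hat H^x_{nm}$ rather than the no-ties order-statistics formula \eqref{KM1e}. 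The paper's route is more elementary in the sense that it avoids any appeal to product-integral Lipschitz estimates and obtains explicit tail constants, but it pays for this in the form of the separate continuous/general-case argument. One small point worth tightening in your write-up: when invoking the Lipschitz bound for the product integral you should state explicitly that you use the versions of $\hat\Lambda_n^x$ and $\hat\Lambda_{nm}^x$ defined via the $H$-integrals (so that they are well defined cadlag increasing functions even with atoms) and that the uniform total-variation bound $O(1/\nu)$ and the jump bound hold on the \emph{same} almost-sure event where $1-\hat H_{nm}^x(\tau_n^x)>\nu/2$; you gesture at this but the deterministic constant in the Lipschitz estimate depends on it.
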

%Note the result  of
Lemma \ref{lem:KM} holds for any positive constant $c$ such that   $\hat{H}_n^x(\tau_n^x) >c.$ We take $c=1/2$ in  Lemma \ref{lem:KM} for the convenience of proof.
From the proof of Theorem \ref{thm2}, we have    $1-\hat{H}_n^x(\tau)>\delta/2$ holds  with probability tending to 1 as  $n\rightarrow \infty$. Hence,  Lemma \ref{lem:KM} leads to the result below.
\begin{theorem}
	\label{thm_together}
	Under the conditions of Theorem \ref{thm2}, we have
\[
\sup_{0\le t \le \tau} |\hat{\Lambda}_{nm}^X(t) -\Lambda^X (t)| \to_P  0
\ \text{ and } \
\sup_{0 \le t \le \tau} |\hat{S}_{nm}^X(t)-S^X(t)| \to_P   0,
\]
as $n,m \to \infty$.
\end{theorem}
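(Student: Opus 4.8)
The plan is to combine the two convergence results that are already in hand: Theorem \ref{thm2}, which controls the distance between the generator-based estimators $\widehat\Lambda^X_n, \widehat S^X_n$ and the truth $\Lambda^X, S^X$ uniformly on $[0,\tau]$ (in expectation over $X$ and in probability over the randomness in $\widetilde G_n$), and Lemma \ref{lem:KM}, which controls the Monte Carlo error between $\widehat\Lambda^x_{nm}, \widehat S^x_{nm}$ and $\widehat\Lambda^x_n, \widehat S^x_n$ uniformly on the data-dependent interval $[0,\tau^x_n]$, at rate $O(\sqrt{\log m/m})$ almost surely in $\eta$. The triangle inequality then gives, for each fixed $x$,
\[
\sup_{0\le t\le \tau}|\widehat\Lambda^x_{nm}(t)-\Lambda^x(t)|
\le \sup_{0\le t\le \tau}|\widehat\Lambda^x_{nm}(t)-\widehat\Lambda^x_n(t)|
+\sup_{0\le t\le \tau}|\widehat\Lambda^x_n(t)-\Lambda^x(t)|,
\]
and similarly for $S$. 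The second term goes to $0$ in the sense of \eqref{HazardConverge}/\eqref{SurvivalConverge} as $n\to\infty$; the first term goes to $0$ as $m\to\infty$ once we have ensured the Monte Carlo interval $[0,\tau^x_n]$ eventually covers $[0,\tau]$.

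The first step is therefore to reconcile the truncation points: Lemma \ref{lem:KM} is stated on $[0,\tau^x_n]$ with $\tau^x_n$ chosen so that $1-\widehat H^x_n(\tau^x_n)>\nu$ and $\widehat H^x_n(\tau^x_n)>1/2$, whereas the theorem asserts uniformity on the fixed interval $[0,\tau]$. The remark preceding the theorem supplies the bridge: from the proof of Theorem \ref{thm2}, $1-\widehat H^x_n(\tau)>\delta/2$ with probability (over $\widetilde G_n$) tending to $1$, so on that event we may take $\tau^x_n\ge \tau$ (choosing $\nu=\delta/2$, and noting $\widehat H^x_n(\tau)\ge 1-\delta/2>1/2$ for $\delta$ small), and Lemma \ref{lem:KM} then applies on all of $[0,\tau]$. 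I would introduce the event $A_n=\{1-\widehat H^X_n(\tau)>\delta/2\}$ and argue that $\Ebb_X P_{\widetilde G_n}(A_n^c)\to 0$; off $A_n$ the estimators are uniformly bounded (hazards by $\log$ of the sample size via the clipping, survival by $1$), so the contribution of $A_n^c$ to any probability statement is negligible.

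Next I would assemble the $\varepsilon$-$\delta$ bookkeeping. Fix $\varepsilon>0$. On $A_n$, Lemma \ref{lem:KM} gives a constant (depending on $x$ through $\widehat G_n$, but the $O(\cdot)$ is uniform in the relevant sense because $\nu=\delta/2$ is fixed) such that the Monte Carlo error is $\le \varepsilon/2$ once $m$ is large; Theorem \ref{thm2} gives that the estimation error exceeds $\varepsilon/2$ with $\Ebb_X P_{\widetilde G_n}$-probability going to $0$. Combining,
\[
\Ebb_X P_{\widetilde G_n, \eta}\!\left(\sup_{0\le t\le\tau}|\widehat\Lambda^X_{nm}(t)-\Lambda^X(t)|>\varepsilon\right)
\le \Ebb_X P_{\widetilde G_n}(A_n^c) + \Ebb_X P_{\widetilde G_n}\!\left(\sup_{t}|\widehat\Lambda^X_n-\Lambda^X|>\varepsilon/2\right) + o_m(1),
\]
where the last term is controlled by the almost-sure rate in Lemma \ref{lem:KM} (the $\eta$-probability of the Monte Carlo error exceeding $\varepsilon/2$ is literally $0$ for $m$ past a threshold on the event $A_n$). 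Letting $n,m\to\infty$ drives the right side to $0$, which is exactly $\to_P$ in the joint randomness; the survival-function statement is identical with $\widehat S$ in place of $\widehat\Lambda$, using the second half of Lemma \ref{lem:KM} and \eqref{SurvivalConverge}.

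The main obstacle is the uniformity in the truncation point and the handling of the nested sources of randomness ($X$, $\widetilde G_n$, and the Monte Carlo $\eta$): one must be careful that the "constant" in the $O(\sqrt{\log m/m})$ rate of Lemma \ref{lem:KM}, which a priori depends on $\widehat G_n$ and $x$ through $\tau^x_n$ and the local behavior of $1-\widehat H^x_n$, does not blow up as $n\to\infty$. Fixing $\nu=\delta/2$ and working on the event $A_n$ pins $\tau^x_n\ge\tau$ and keeps $1-\widehat H^x_n$ bounded below by $\delta/2$ on $[0,\tau]$ uniformly, which is what makes the rate uniform enough to push through. A secondary (routine) point is verifying that the boundedness we invoke off $A_n$ is genuine — the Nelson--Aalen estimator from a size-$m$ sample is bounded by $\sum_{j=1}^m 1/(m-j+1)=O(\log m)$ and the Kaplan--Meier estimator lies in $[0,1]$ — so that the bad event contributes nothing in the limit.
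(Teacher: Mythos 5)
Your proof is correct and matches the paper's approach: the paper's proof of Theorem~\ref{thm_together} is the one-liner ``follows from Theorem~\ref{thm2} and Lemma~\ref{lem:KM},'' with the reconciliation of the truncation point (via $1-\widehat H^x_n(\tau)>\delta/2$ with probability tending to one) handled in the remark immediately preceding the theorem. You have simply spelled out, correctly, the triangle-inequality decomposition and the bookkeeping over the three sources of randomness that the paper leaves implicit.
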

Therefore, the estimators of the conditional cumulative hazard function and
the conditional survival function computed based on Monte Carlo are consistent.

\iffalse
\begin{theorem}
	\label{thmZ_conrate}
	Suppose Assumptions \ref{asp1} -- \ref{assume:truncation} are satisfied and the  conditions of Theorem  \ref{lemm2a} hold. Suppose there is a positive constant  $c_0$ such  that the conditional hazard function of $T$ given $X=x$ satisfies $c_0^{-1}\leq \lambda^x(t)\leq c_0$ when $t\in[0,\tau]$.   Then{ \color{red}{for $\hat{G}_n$  that has  bounded derivative of $\hat{\Lambda}^x_n$,}}	 we obtain
	\begin{align*}
	\Ebb_{X}\Ebb_{\tG_n}\sup_{0\leq t\leq \tau }\big|\hat{\Lambda}^X_{n}(t)-\Lambda^X(t)\big|
	\leq  C\{\log r_n^{-1}\}^{-1/4}\{\log\{\log r_n^{-1}\}\}^{1/4},
	\end{align*}
	and
	\begin{align*}
	\Ebb_{X}\Ebb_{\tG_n}\sup_{0\leq t\leq \tau }\big|\hat{S}^X_{n}(t)-S^X(t)\big|\leq  C\{\log r_n^{-1}\}^{-1/4}\log\{\{\log r_n^{-1}\}\}^{1/4}.
	\end{align*}
	for some constant $C$ as $n \to \infty$.
\end{theorem}
\fi

\section{Implementation}
\label{implementation}
Our method is implemented in two steps. The first step is training the conditional generator, i.e. getting the estimator of the neural network parameters. The second step is generating samples from the conditional generator at given $X = x$ and applying the Nelson-Aalen and Keplan-Meier estimator to the samples.

%For the first step, we modify the implementation described in \citep{lzjh2021}.
Let $(G_{\vtheta_1}, G_{\vtheta_2})$ and $D_\vphi$ denote the conditional generator for $(Y,\Delta)$ and the discriminator. Let $\vtheta_1, \vtheta_2$ and $\vphi$ denote their parameters in neural networks, respectively.
The estimator $(\hat\vtheta_1, \hat\vtheta_2,\hat\vphi)$ of the neural network parameter $(\vtheta_1, \vtheta_2, \vphi)$
is the solution to the minimax problem
\begin{align}
	\label{MinimaxA}
	(\vtheta_1, \vtheta_2, \vphi) = \argmin_{\vtheta_1,\vtheta_2}\argmax_{\vphi}
\frac{1}{n}\sum_{i=1}^n\{
	& D_\vphi(X_i,\hG_{\vtheta_1}(\eta_i, X_i), \hG_{\vtheta_2}(\eta_i, X_i))-
	D_\vphi(X_i,Y_i, \Delta_i) \nonumber \\
	& -\lambda (\|\nabla_{(x, y)}D_\vphi(X_i,Y_i, \Delta_i)\|_2-1)^2\big\},
%\frac{1}{n}\sum_{i=1}^n\{
%	D_\vphi(X_i, G_{\vtheta_1}(\eta_i, X_i), G_{\vtheta_2}(\eta_i, X_i))-
	% \frac{1}{n}\sum_{i=1}^n
%	D_\vphi(X_i,Y_i, \Delta_i)\}.
\end{align}
%Then the estimator of $(G, D)$ is $(G_{\hat\vtheta}, D_{\hat\vphi}).$
where  $\nabla_{(x,y)}D_\vphi(X_i,Y_i, \Delta_i)$ is the gradient of
$D_\vphi(x, y, \delta)$ with respect to $(x, y)$ evaluated at $(X_i, Y_i, \Delta_i).$
Here we use the gradient penalty algorithm to impose the constraint that the discriminator belongs to the class of 1-Lipschitz functions \citep{gulrajani2017improved}.
The minimax problem (\ref{MinimaxA}) is solved by
updating $(\vtheta_1,\vtheta_2)$ and $\vphi$ alternately as follows:
%\begin{itemize}
% \setlength\itemsep{-0.2 cm}
%	\item[(a)]

(a) Fix  $(\vtheta_1, \vtheta_2)$,  update the discriminator by maximizing the empirical objective function
\begin{align*}
	\hat\vphi = \argmax_{\vphi} \frac{1}{n}\sum_{i=1}^n\{
	& D_\vphi(X_i,\hG_{\vtheta_1}(\eta_i, X_i), \hG_{\vtheta_2}(\eta_i, X_i))-
	D_\vphi(X_i,Y_i, \Delta_i) \\
	& -\lambda (\|\nabla_{(x, y)}D_\vphi(X_i,Y_i, \Delta_i)\|_2-1)^2\big\}
\end{align*}
with respect to $\vphi.$
The third term on the right side is the gradients penalty for the Lipschitz condition on the discriminator  \citep{gulrajani2017improved}.

%	\item[(b)]

(b) Fix  $\vphi$,  update the generator by minimizing the empirical objective function
\begin{eqnarray*}
	(\hat\vtheta_1, \hat\vtheta_2)= \argmin_{\vtheta_1,\vtheta_2}\frac{1}{n}\sum_{i=1}^n
	\hD_\vphi(X_i, G_{\vtheta_1}(\eta_i, X_i), G_{\vtheta_2}(\eta_i, X_i))
\end{eqnarray*} with respect to $\vtheta_1$ and $\vtheta_2$.
%\end{itemize}

%To comply with our assumption on the conditional generators,
The conditional generator $G_1$ for the observed time $Y$ has a linear output activation function.  The conditional generator $G_2$ for the censoring indicator has a sigmoid output activation function, then the censoring status is determined based on whether the value of the sigmoid function is greater than 0.5 or not. The two generators $G_1$ and $G_2$ can have the same structure and share the same parameters except for the output layer. We implemented this algorithm in TensorFlow \citep{abadi2016tensorflow}.

%For the second step, we generate a large number of samples as needed and then applying the desired %estimator.

%We also implemented the weight clipping method for enforcing the Lipschitz condition on the %discriminator \citep{arjovsky17}.  With weight clipping, all weights in the discriminator is truncated %to be between $[-c,c],$ where $c>0$  is a small number.
% such as 0.01.
%We found that the gradient penalty method is more stable and converges faster.  So we only report the %numerical results based on the gradient penalty method below.
%The results are similar to gradients penalty.

%\section{Numerical experiments}
\section{Numerical studies}
In this section, we conducted simulation studies to evaluate the finite-sample performance of the proposed method and illustrate its applications to two datasets.
%using the PBC and the SUPPORT datasets.

\subsection{Simulation studies}
%We conducted extensive simulation studies to evaluate the finite-sample performance of the %proposed method. Specifically,
We compare the results based on GCSE and the Cox proportional hazards (PH) model under covariate-dependent and -independent censoring scenarios for four generating models described below. For all eight simulation scenarios, we considered covariate $X \sim N(\0, \bI_5)$ and censoring rate at 50\%.

\begin{itemize}
	\item Model (M1): A proportional hazards model with a constant baseline hazard function. The conditional hazard of survival time $T$ given $X=x$ is
$
\lambda(t|x) = \lambda_0 \exp (\beta(x)),
%x_1 + 0.5x_2+1.5x_3 - 2x_4 - 0.3 x_5).
$
where $\lambda_0=1$ and $\beta(x) = x_1 + 0.5x_2+1.5x_3 - 2x_4 - 0.3 x_5.$
%So the conditional distribution of $T$ given $X=x$ is exponential with rate $r(x) = 1/\beta(x),$
For independent censoring, the censoring times $C \sim \mbox{Uniform}(0, 2)$. For covariate-dependent censoring, the censoring times follow an exponential distribution with rate parameter $0.8 e^{\phi(x)}$ truncated at 10, where $\phi(x)=x_1+x_2-x_3$.

%Linear proportional hazard with Exponential distribution.
%	$$X\in R^5 \sim Normal(0,1)$$
%	$$T\sim exponential(rate = \frac{1}{exp(X_1 + 0.5X_2+1.5X_3 - 2X_4 - 0.3 X_5)})$$
%	$$Y = \log T$$
%	$$C \sim Uniform(-1,5)$$
%	$$Z = \min(Y,C)$$
%	$$\delta = 1_{\{Y<C\}}$$
%	This model satisfies all assumptions of COX model.
	
\item Model (M2): A nonlinear proportional hazards model.
%with Weibull distribution.
The conditional distribution of survival time $T$ given $X=x$ follows a Weibull distribution with conditional survival function $S(t|x) = \exp [- ({t}/{\psi(x)})^v], t \ge 0$, where the shape parameter $\nu=2$ and the scale parameter $\psi(x) = \exp (x_1^2 - 0.5x_2^2 + 1.5x_3 - 2x_4 - 0.3 x_5)$.
%The hazard function is $ \lambda(t|x)=\nu t^{\nu-1} \psi^{-\nu}(x) = 2 t [ exp (-2x_1^2+x_2^2-3x_3 +4x_4 +0.6x_5)].$ This is a nonlinear proportional hazards model with baseline hazard function $\lambda_0(t)=2t.$
The covariate-independent censoring times $C \sim \mbox{Uniform} (0,3.5)$ and covariate-dependent censoring times follow an exponential distribution with rate parameter $0.4 e^{\phi(x)}$ truncated at 10, where $\phi(x)=x_1+x_2-x_3$.

\item Model (M3): An accelerated failure time model with a normal error: $\log T = x_1^2 +0.5 x_2-0.8 x_3 + \epsilon$, where $\epsilon \sim N(0, 1)$.
%we consider the case when $T$ is a logarithm of the survival time with $T|X=x \sim Normal(\mu(x), \sigma^2),$ where $\mu(x) = x_1^2 +0.5 x_2-0.8 x_3$ and $\sigma^2=0.5^2.$
For censoring independent of covariates, we set censoring times $C \sim \mbox{Uniform}(0, 6)$. Under the covariate-dependent scenario, the censoring times follow an exponential distribution with rate parameter $0.2 e^{\phi(x)}$ truncated at 10, where $\phi(x)=x_1+x_2-x_3$.

\item Model (M4): An accelerated failure time model with an log(exponential) error component: $\log T=x_1^2 + 0.5x_2 - 0.8x_3 + \epsilon$, where $\exp(\epsilon) \sim \mbox{Exponential}(1)$. The covariate-independent censoring times $C \sim \mbox{Uniform} (0,5)$, whereas the covariate-dependent censoring times follow an exponential distribution with rate parameter $0.3 e^{\phi(x)}$ truncated at 10, where $\phi(x)=x_1+x_2-x_3$.
	
\end{itemize}

%independent censoring: M1 D: (60,30), G: (60,30); M2 D: (60,30), G: (60,30); M3 D: (50,25), G: (40,20); M4 D: (50,25), G: (40,20)\\
%dependent censoring: M1 D: (50,25), G: (50,25); M2 D: (60,30), G: (60,30); M3 D: (50,25), G: (40,20); M4 D: (50,25), G: (40,20)

The observed data $D=(X, Y, \Delta)$, where $Y$ is the natural logarithm of $\min \{T, C\}$ and $\Delta=1\{ T \le C\}$. For each model, we set sample size $n=10,000$ and repeated the simulation 200 times.
%For cox model, Efron approximatetion is used if needed.
We used feedforward neural networks with two hidden layers for both the generator and the discriminator. For covariate-independent censoring scenarios, we used $\{60, 30\}$ nodes for both the discriminator and generator for models (M1) and (M2). For (M3) and (M4), we used $\{50, 25\}$ nodes for the discriminator and $\{40, 20\}$ nodes for the generator. Under covariate-dependent censoring scenarios, we used $\{50,25\}$ for both discriminator and generator for (M1). Nodes chosen for (M2), (M3), and (M4) are the same as those under covariate-independent scenarios. We adopted the Elu activation function for hidden layers with $\alpha = 0.3$, which is defined as $\sigma_{\alpha}(x)=x$ if $x \ge 0$ and $\sigma(x)=\alpha (e^x -1)$ if $x < 0$ with $\alpha >0$.
%The model is trained for 5000 updates of generator. The discriminator is updated 5 times per update %of generator.
Dimensions of the noise vector $\eta$ for models (M1)--(M4) are 3, 7, 5, and 7, respectively. For comparison, we used the \textsf{R} package \textsl{survival} to obtain estimation results based on the PH model.
% (\url{https://cran.r-project.org/web/packages/survival/index.html}).

Tables \ref{tab:sprob1} and \ref{tab:sprob2} compare the estimation results obtained by the GCSE method and the partial likelihood approach with the PH model  based on 200 replications under covariate-independent and -dependent censoring scenarios at theoretical levels 25\%, 50\% and 75\% for $x=\{({\bf -0.5}_5)^\top, ({\bf 0.5}_5)^\top, ({\bf 1.0}_5)^\top\}$, respectively. Intuitively, estimates close to the theoretical quantiles imply good performance. Similar results were observed under covariate-independent and -dependent censoring settings for both the GCSE method and the PH model. Although the PH model predictably outperformed the GCSE method in (M1), the obtained estimates by the GCSE  were still reasonably close to the true quantiles. Moreover, in (M2)--(M4), the GCSE method yielded estimates close proximity to the true quantiles while the partial likelihood approach  produced estimates far from the true quantiles, due to violation of the proportional hazards assumption.
%With coverariate-dependent censoring, the results are similar and provided in the Supplementary Material.

%In model (M1) when the underlying model is a linear proportional hazards model, the Cox regression performs slightly better than the proposed method, but there are no significant differences between the two methods. However, in the nonlinear proportional hazards model (M2) and the two nonproportional hazards models (M3) and (M4), the proposed method outperforms the Cox regression, the improvements can be substantial in some cases.

Figures \ref{fig:simu_survival1} and \ref{fig:simu_survival2} show one instance of the estimated conditional survival functions under the covariate-independent and -dependent censoring scenario for $x=\{({\bf -0.5}_5)^\top, ({\bf 0.5}_5)^\top, ({\bf 1.0}_5)^\top\}$, respectively. The solid grey line represents the ground-truth, while the dashed line and dotted line respectively depict the survival functions estimated by the GCSE method and PH model. Evidently, survival functions estimated by the GCSE method are within close proximity to the true function for simulation scenarios. On the other hand, the estimated survival functions from the PH model are close to the true function in (M1), however, its performance deteriorates in (M2)--(M4) due to violation of the proportionality assumption. Similar results are observed for both censoring settings. The GCSE method consistently yields promising results regardless of the underlying model while the PH model is obviously constrained by the proportional hazards assumption.

%Similar results are obtained under the coveriate-dependent censoring scenario and are given in the Supplementary Material.
%In the top panel for model (M1) when the underlying model is a linear proportional hazards model, the Cox regression estimated conditional survival is slightly closer to the underlying curve,  but the differences between the two curves are small. In the nonlinear proportional hazards model (M2) and the two nonproportional hazards models (M3) and (M4), the proposed method estimated conditional survival curves are also close to the underlying curves. In comparison,  the estimated conditional survival curves based on the proportional hazards model assumption can be seen to be biased.

\begin{table}[H]
\caption{Estimated conditional survival probabilities for covariate-independent censoring scenarios based on 200 simulations for $x=\{({\bf -0.5}_5)^\top, ({\bf 0.5}_5)^\top, ({\bf 1.0}_5)^\top\}$ at theoretical percentiles 25\%,  50\% and 75\%. The closer to theoretical levels the better. GCSE denotes the proposed generative conditional survival function estimator and PH represents the Cox proportional hazards model.}

\bigskip
	%\centering \begin{tabular}{rc@{\hspace{1mm}}c@{\hspace{1mm}}cc@{\hspace{1mm}}c@{\hspace{1mm}}c}
	\centering \begin{tabular}{r@{\hspace{2mm}}c@{\hspace{2mm}}c@{\hspace{2mm}}c@{\hspace{2mm}}
c@{\hspace{2mm}}c@{\hspace{2mm}}c@{\hspace{2mm}}c}
		\hline
		&\multicolumn{3}{c}{GCSE} & \multicolumn{3}{c}{PH}\\
%$x$	& $\hat S(S^{-1}(0.25))$& $\hat S(S^{-1}(0.5))$ & $\hat S(S^{-1}(0.75))$ & $\hat %S(S^{-1}(0.25))$ & $\hat S(S^{-1}(0.5))$ & $\hat S(S^{-1}(0.75))$ \\
$x$	& 25\% &  50\%  &  75\%  & &  25\%  &  50\%  & 75\% \\
\hline
%\cline{2-4} \cline{6-8}
		& \multicolumn{7}{c}{(M1) Cox-Exponential}\\
		\hline
-0.5 & 25.66 (0.46) & 51.08 (0.38) & 75.96 (0.27) & & \textbf{24.52} (0.16) & \textbf{49.58} (0.16) & \textbf{75.44} (0.09) \\
  0.5 & 18.36 (0.81) & \textbf{49.83} (0.40) & 73.79 (0.28) & & \textbf{24.84} (0.30) & 49.10 (0.15) & \textbf{74.06} (0.11) \\
  1 & \textbf{25.04} (1.34) & 47.03 (0.90) & 73.84 (0.50) & & 37.25 (0.38) & \textbf{49.89} (0.22) & \textbf{74.49} (0.14) \\
		\hline
		&\multicolumn{7}{c}{(M2) Cox-Weibull}\\
		\hline
-0.5 & \textbf{27.15}(0.60) & \textbf{52.24}(0.53) & \textbf{77.85}(0.43) & & 51.35(0.22) & 58.44(0.20) & 66.83(0.17) \\
  0.5 & \textbf{30.30}(0.68) & \textbf{55.68}(0.63) & \textbf{79.42}(0.43) & & 50.24(0.18) & 58.15(0.17) & 66.76(0.15) \\
  1 & \textbf{32.71}(1.12) & 55.33(1.12) & \textbf{78.46}(0.69) & & 40.69(0.33) & \textbf{49.2}(0.32) & 59.48(0.29) \\
		\hline
		&\multicolumn{7}{c}{(M3) AFT-Normal}\\
		\hline
-0.5 & \textbf{26.60}(0.39) & \textbf{51.28}(0.38) & \textbf{75.13}(0.36) & & 48.01(0.14) & 64.94(0.12) & 79.58(0.08) \\
  0.5 & \textbf{26.09}(0.32) & \textbf{49.52}(0.34) & \textbf{74.07}(0.33) & & 47.78(0.14) & 66.03(0.11) & 81.07(0.08) \\
  1 & \textbf{26.67}(0.70) & \textbf{54.28}(0.61) & \textbf{77.54}(0.47) & & 27.92(0.22) & 45.69(0.21) & 64.68(0.17) \\
\hline
		&\multicolumn{7}{c}{(M4) AFT-Exponential}\\
		\hline
-0.5 & \textbf{27.34}(0.39) & \textbf{51.92}(0.36) & \textbf{75.48}(0.28) & & 44.76(0.13) & 61.82(0.11) & 79.15(0.08) \\
  0.5 & \textbf{25.98}(0.37) & \textbf{49.93}(0.38) & \textbf{74.71}(0.30) & & 45.16(0.12) & 62.78(0.10) & 80.29(0.07) \\
  1 & \textbf{26.42}(0.74) & \textbf{53.71}(0.65) & \textbf{77.46}(0.45) & & 26.53(0.20) & 43.87(0.18) & 66.32(0.14) \\
		\hline
\end{tabular}

\label{tab:sprob1}
\end{table}

\begin{table}[H]
\caption{Estimated conditional survival probabilities for covariate-dependent scenarios based on 200 simulations for $x=\{({\bf -0.5}_5)^\top, ({\bf 0.5}_5)^\top, ({\bf 1.0}_5)^\top\}$ at theoretical percentiles 25\%,  50\% and 75\%. The closer to theoretical levels the better. GCSE denotes the proposed generative conditional survival function estimator and PH represents the Cox proportional hazards model.}

\bigskip
	\centering \begin{tabular} {r@{\hspace{2mm}}c@{\hspace{2mm}}c@{\hspace{2mm}}c@{\hspace{2mm}}
c@{\hspace{2mm}}c@{\hspace{2mm}}c@{\hspace{2mm}}c}
		\hline
		&\multicolumn{3}{c}{GCSE} & \multicolumn{3}{c}{PH}\\
$x$	& 25\% &  50\%  &  75\%  & &  25\%  &  50\%  & 75\% \\
%$x$	& 25\% &  50\%  &  75\%  &  25\%  &  50\%  & 75\% \\
		\hline
		& \multicolumn{7}{c}{(M1) Cox-Exponential}\\
		\hline
-0.5 & \textbf{25.69} (0.43) & \textbf{50.48} (0.40) & \textbf{74.99} (0.30) & & 23.64 (0.18) & 49.39 (0.16) & 74.95 (0.11) \\
  0.5 & 19.90 (0.79) & 46.60 (0.52) & 72.22 (0.33) & & \textbf{24.28} (0.21) & \textbf{49.10} (0.20) & \textbf{74.50} (0.12) \\
  1 & \textbf{25.24} (1.57) & 41.88 (1.43) & 69.45 (0.75) & & 24.74 (0.27) & \textbf{49.53} (0.26) & \textbf{73.98} (0.16) \\
		\hline
		&\multicolumn{7}{c}{(M2) Cox-Weibull}\\
		\hline
-0.5 & \textbf{33.93} (0.71) & 58.45 (0.61) & \textbf{81.45} (0.39) & & 49.82 (0.19) & \textbf{58.28} (0.18) & 68.05 (0.14) \\
  0.5 & \textbf{28.07} (0.67) & \textbf{52.62} (0.59) & \textbf{76.76} (0.41) & & 44.11 (0.20) & 53.76 (0.18) & 64.34 (0.17) \\
  1 & \textbf{22.11} (0.91) & \textbf{42.83} (1.01) & \textbf{69.34} (0.77) & & 30.89 (0.32) & 40.73 (0.32) & 53.04 (0.31) \\
		\hline
		&\multicolumn{7}{c}{(M3) AFT-Normal}\\
		\hline
-0.5 & \textbf{26.48} (0.35) & \textbf{51.42} (0.39) & \textbf{75.40} (0.34) & & 48.27 (0.15) & 65.25 (0.12) & 79.71 (0.08) \\
  0.5 & \textbf{26.17} (0.32) & \textbf{50.03} (0.36) & \textbf{74.80} (0.34) & & 47.63 (0.14) & 65.96 (0.11) & 81.01 (0.08) \\
  1 & \textbf{25.67} (0.65) & \textbf{53.48} (0.66) & \textbf{77.21} (0.51) & & 27.47 (0.22) & 45.28 (0.22) & 64.42 (0.18) \\
\hline
		&\multicolumn{7}{c}{(M4) AFT-Exponential}\\
		\hline
-0.5 & \textbf{24.99} (0.43) & \textbf{49.31} (0.41) & \textbf{73.67} (0.29) & & 44.66 (0.14) & 61.65 (0.12) & 79.43 (0.08) \\
  0.5 & \textbf{25.96} (0.45) & \textbf{50.80} (0.44) & \textbf{75.27} (0.30) & & 39.72 (0.14) & 58.56 (0.12) & 78.11 (0.09) \\
  1 & \textbf{23.78} (0.91) & \textbf{49.38} (0.86) & \textbf{75.30} (0.54) & & 19.93 (0.21) & 35.92 (0.21) & 60.53 (0.18) \\
		\hline
\end{tabular}

\label{tab:sprob2}
\end{table}

\begin{figure}[H]
\begin{center}
\begin{minipage}[t]{5in}
\centering
\centerline{\footnotesize{(M1) Cox-Exponential}}
\includegraphics[width=5in,height=1.5in]{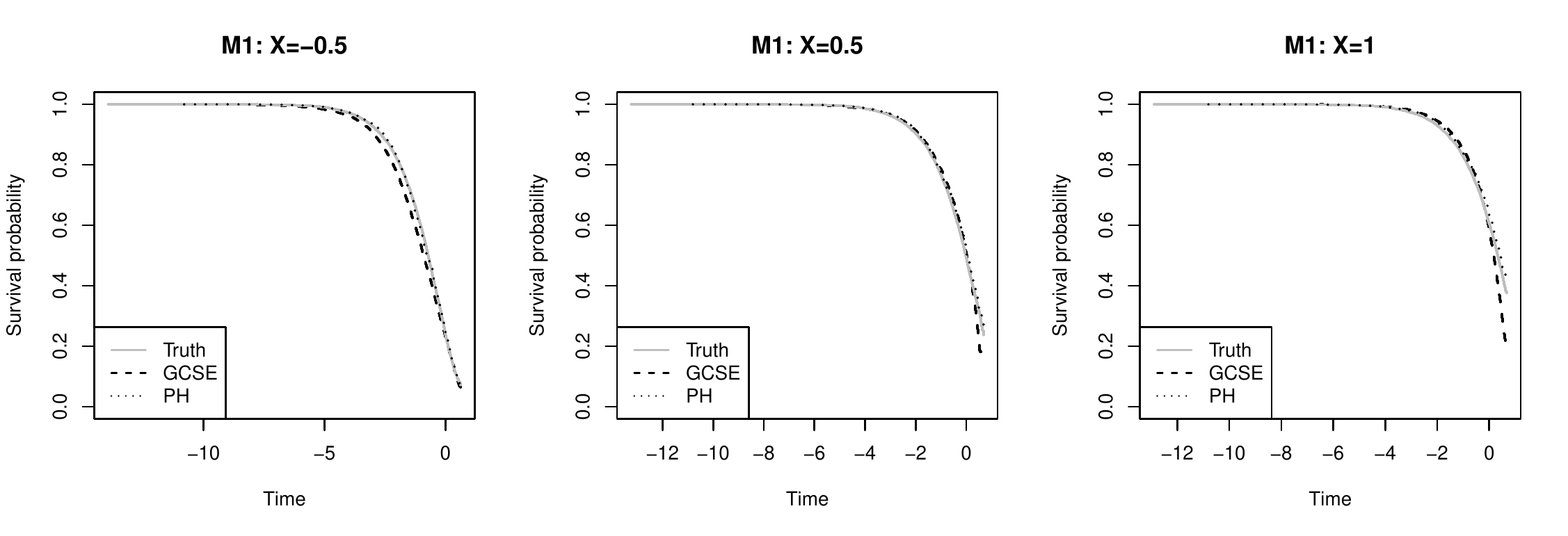}
\end{minipage}

\begin{minipage}[t]{5in}
\centering
\centerline{\footnotesize{(M2) Cox-Weibull}}
\includegraphics[width=5in,height=1.5in]{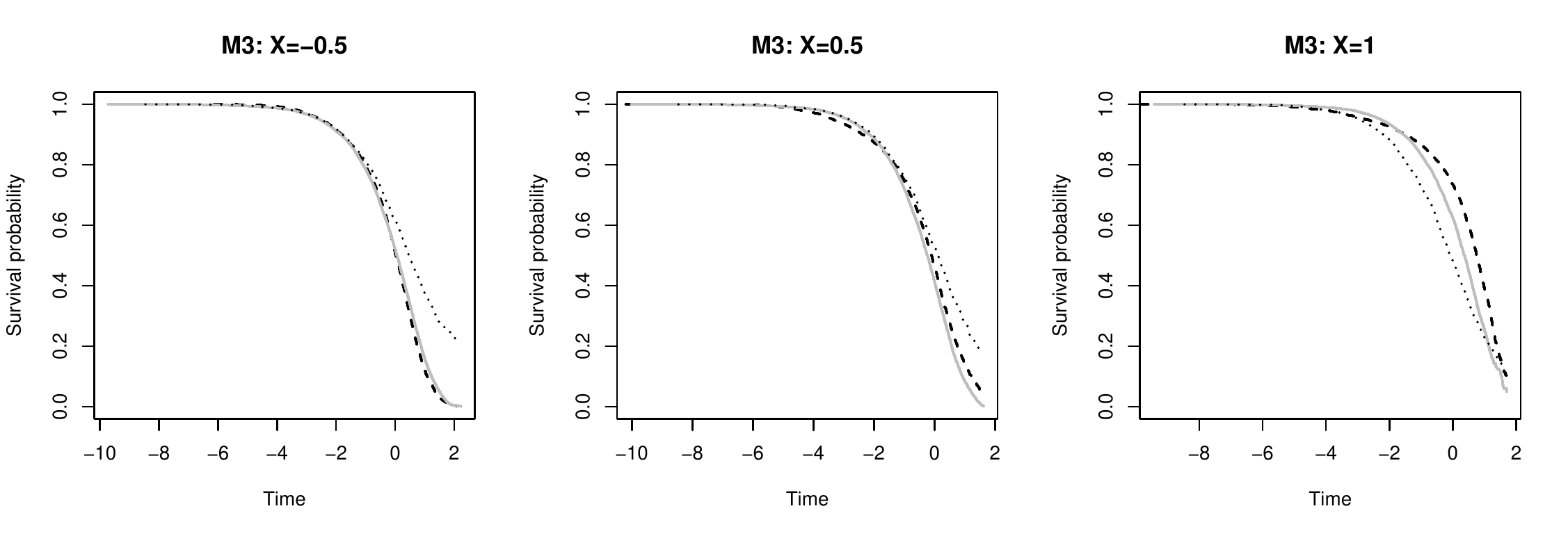}
\end{minipage}

\begin{minipage}[t]{5in}
\centering
\centerline{\footnotesize{(M3) AFT-Normal}}
\includegraphics[width=5in,height=1.5in]{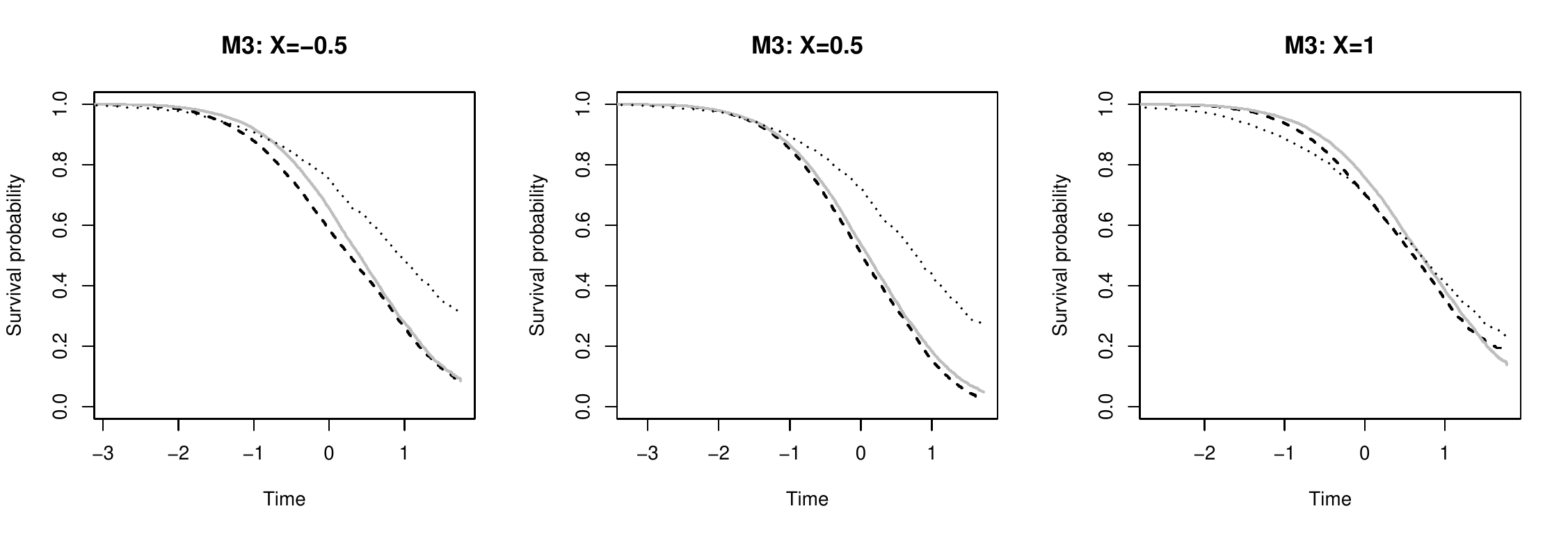}
\end{minipage}

\begin{minipage}[t]{5in}
\centering
\centerline{\footnotesize{(M4) AFT-Exponential}}
\includegraphics[width=5in,height=1.5in]{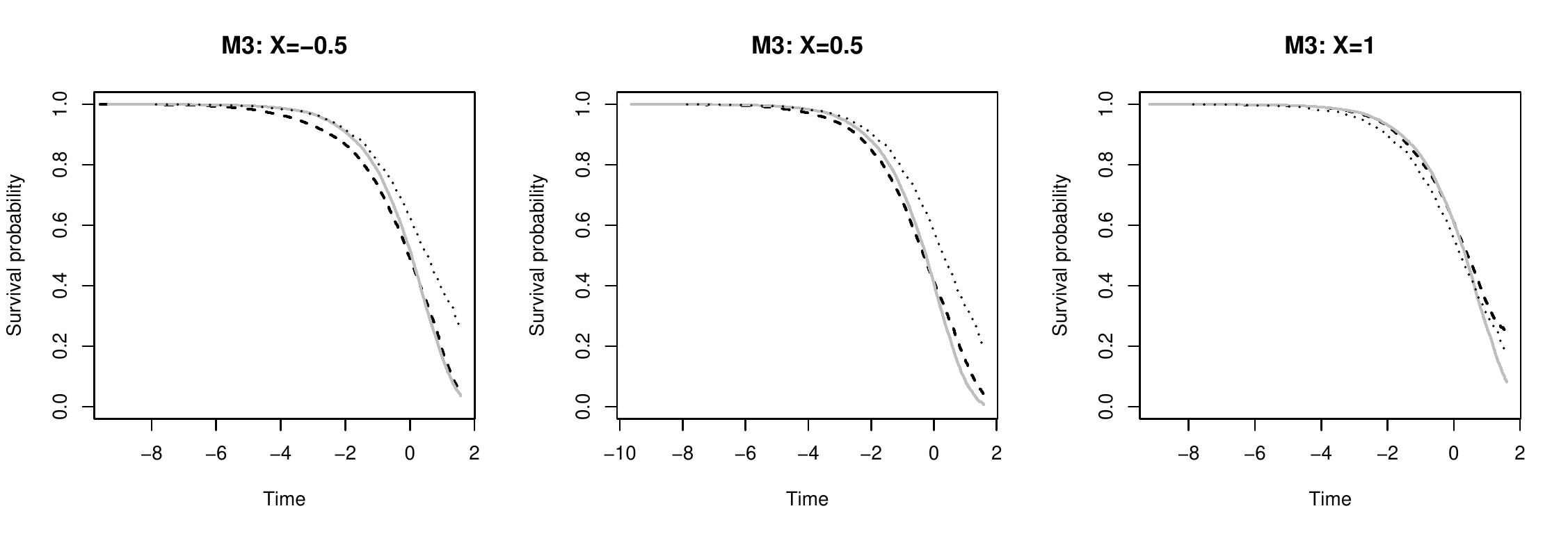}
\end{minipage}

\caption{The true survival function (solid gray line) and estimated conditional survival functions for covariate-independent scenarios based on the proposed method (dashed line) and the Cox regression (dotted line) for $x=\{({\bf -0.5}_5)^\top, ({\bf 0.5}_5)^\top, ({\bf 1.0}_5)^\top\}$.}
\label{fig:simu_survival1}
\end{center}
\end{figure}

\begin{figure}[H]
\begin{center}
\begin{minipage}[t]{5in}
\centering
\centerline{\footnotesize{(M1) Cox-Exponential}}
\includegraphics[width=5in,height=1.5in]{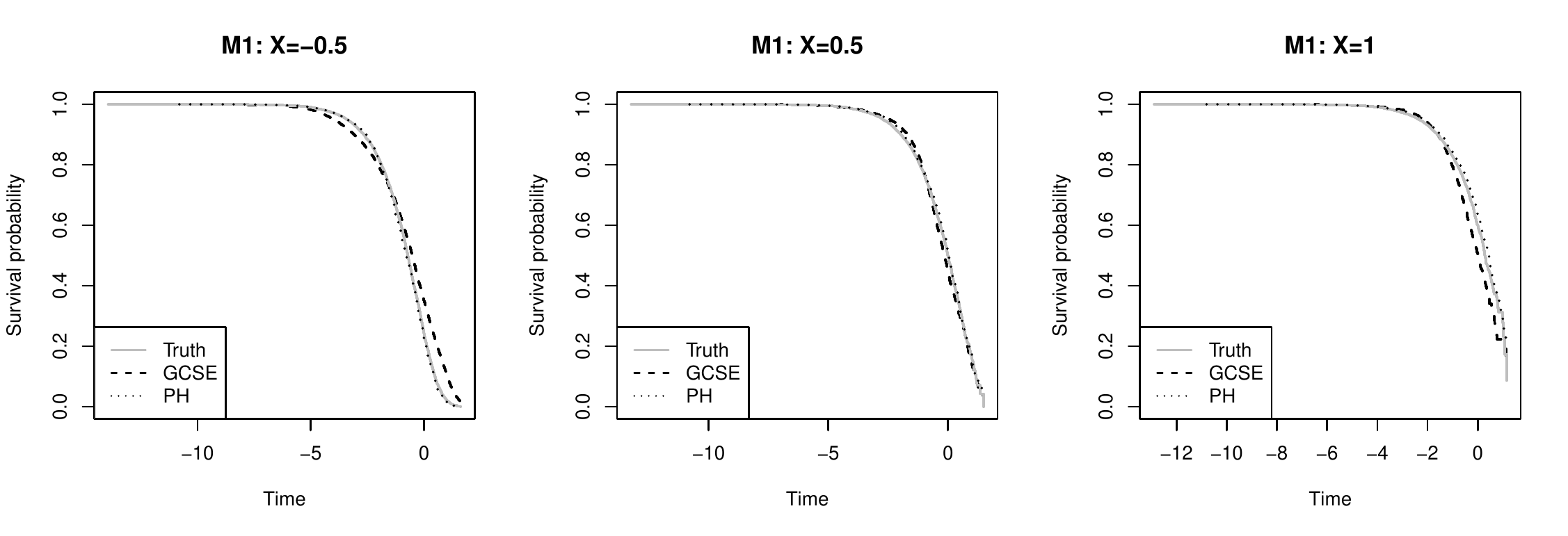}
\end{minipage}

\begin{minipage}[t]{5in}
\centering
\centerline{\footnotesize{(M2) Cox-Weibull}}
\includegraphics[width=5in,height=1.5in]{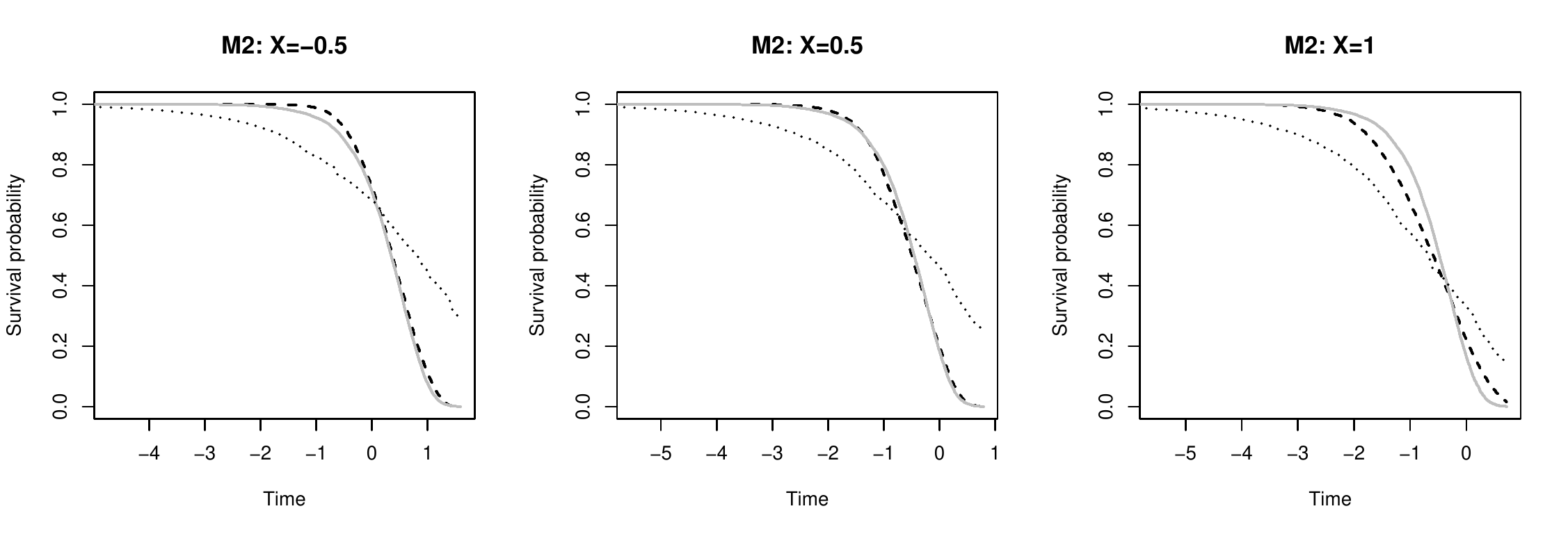}
\end{minipage}

\begin{minipage}[t]{5in}
\centering
\centerline{\footnotesize{(M3) AFT-Normal}}
\includegraphics[width=5in,height=1.5in]{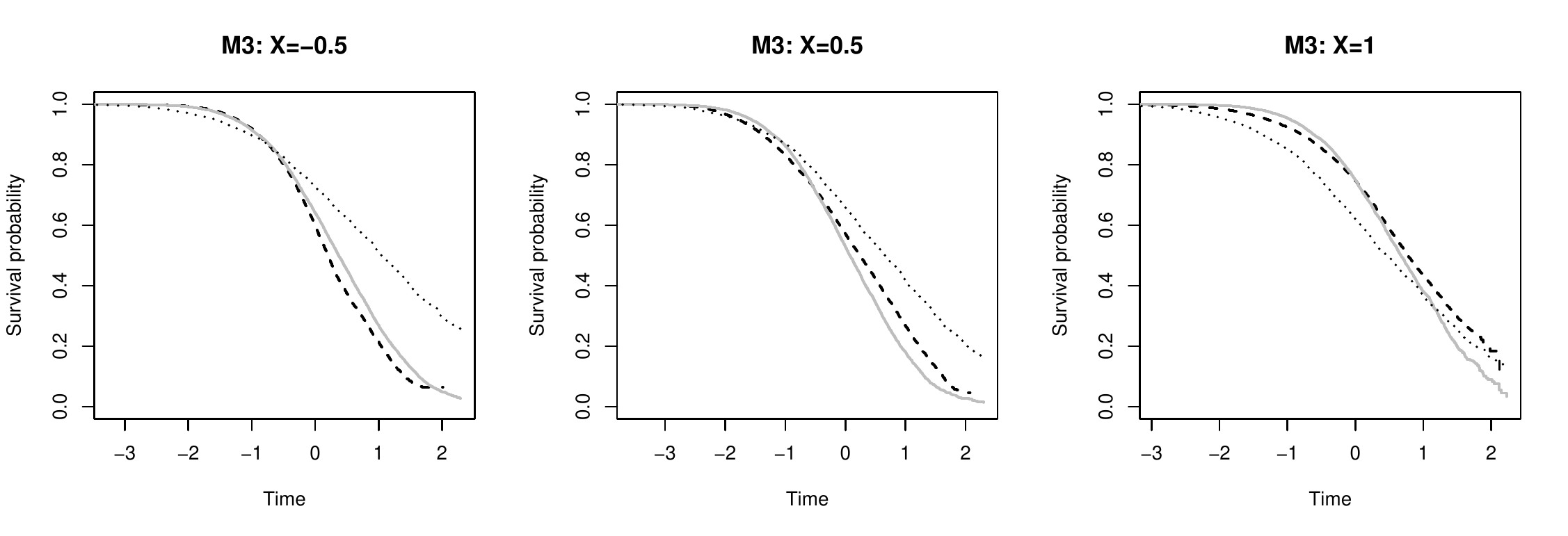}
\end{minipage}

\begin{minipage}[t]{5in}
\centering
\centerline{\footnotesize{(M4) AFT-Exponential}}
\includegraphics[width=5in,height=1.5in]{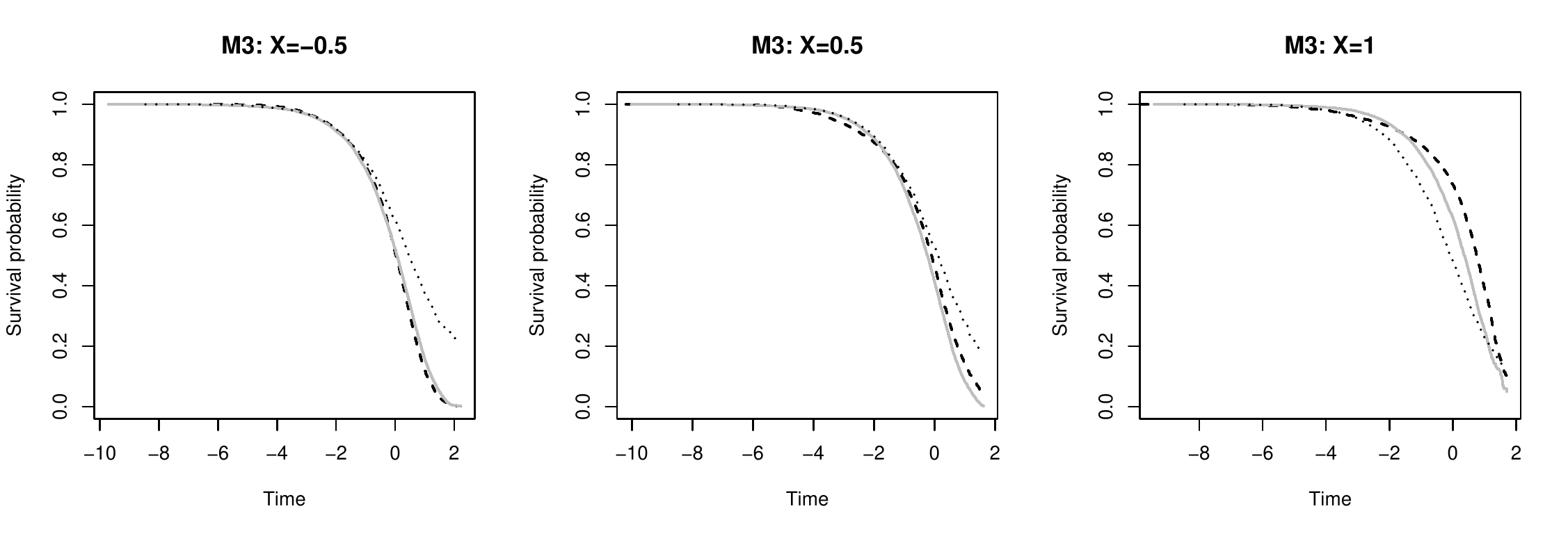}
\end{minipage}

\caption{The true survival function (solid gray line) and estimated conditional survival functions for covariate-dependent scenarios based on the proposed method (dashed line) and the Cox regression (dotted line) for $x=\{({\bf -0.5}_5)^\top, ({\bf 0.5}_5)^\top, ({\bf 1.0}_5)^\top\}$.}
\label{fig:simu_survival2}
\end{center}
\end{figure}

\subsection{Data examples} %Applications}
We illustrate the application of GCSE with two publicly available censored survival datasets. A training set was used to learn a conditional generator and estimate the conditional survival distribution function, followed by construction of predication intervals for the survival times of patients in the test set to evaluate performance of the estimators.

\subsubsection{The PBC dataset}
Primary biliary cholangitis (PBC) is an autoimmune disease that causes the destruction of small bile ducts in the liver, eventually leading to cirrhosis and liver decompensation. The Mayo Clinic trial in PBC enrolled 424 patients between 1974 and 1984 \citep{fh1991, tg2000} and randomly assigned each subject to placebo or D-penicillamine treatment.
% A total of 424 PBC patients, referred to Mayo Clinic during that ten-year interval, met eligibility %criteria for the randomized placebo controlled trial of the drug D-penicillamine.
%The first 312 cases in the data set participated in the randomized trial and contain largely complete data. The additional 112 cases did not participate in the clinical trial, but consented to have basic measurements recorded and to be followed for survival. Six of those cases were lost to follow-up shortly after diagnosis.
%, so the data here are on an additional 106 cases as well as the 312 randomized participants.
%, which is comoonly used in survival analysis.
The dataset is conveniently accessible from the \textsf{R} package \textsf{survival}, which contains 17 medical and treatment covariates including \textit{age, albumin, alk.phos, ascites, ast, bili, chol, copper, edema, hepato, platelet, protime, sex, spiders, stage, trt, trig}. After removing erroneous records and missing data, 276 patients remained for analysis with 90\% of data allocated to training and 10\% to testing. Status at the endpoint includes censored, transplant and death. The 25 patients who received transplant were included in the censored group, resulting a total of 165 censored patients, or censoring rate 60\%. The observed \textit{time} was the number of days elapsed since registration until death, transplantation, or end of study period July, 1986, whichever happened first. The covariates were standardized to have zero mean and unit variance while time was log-transformed.
%Time and censoring status are considered target variables.
%Patient ID is dropped.
 %A more detailed description of the data can be found at
%\url{https://stat.ethz.ch/R-manual/R-devel/library/survival/html/pbc.html}.

We applied the GCSE method to estimate the conditional survival functions of PBC patients using this dataset. We parameterized the generator and discriminator using feedforward neural networks with two hidden layers and adopted the Elu activation function with $\alpha = 0.3$. The generator and discriminator were both implemented with $\{30,15\}$ nodes.
%The model is trained for 5,000 updates of generator. The discriminator is updated 5 times per update %of generator. The dimension of noise is 5.
Figure \ref{fig:pbc_ci} and \ref{fig:pbc_ci_cox} show the GCSE and PH prediction intervals for patients in the test set, respectively. During the testing process, a patient with predicted probability of censoring higher than 50\% was categorized as censored.
%When we generate conditional samples for patients in the test set, if more than half of the samples are censored, it is predicted to be censored.
Otherwise, a 90\% confidence interval was constructed based on the Kaplan-Meier estimator for patients predicted to be uncensored. The test set contained 28 subjects. Of the 17 censored subjects, 8 were correctly predicted as censored. All of the uncensored subjects were predicted as uncensored and the prediction intervals contained the true survival times for 9 out of 11 subjects. Similarly, PH prediction intervals included 9 out of 11 uncensored subjects, however incapable to correctly predict censoring status for censored subjects.
%in which 15 are predicted to be censored. In the remaining 11 uncensored cases, none are predicted to be censored and the prediction intervals contain the true survival time in 9 of the 11 cases.
%Including both the correct censoring status predictions and the uncensored cases, the correct %prediction rate is $(15+9)/28=86\%.$
Figure \ref{fig:pbc_3de_case} shows the estimated survival functions for three patients from the test set. The estimated survival probabilities of these patients at the 25\textsuperscript{th}, 50\textsuperscript{th}, and 75\textsuperscript{th} quantiles of the uncensored survival times in the test set. Evidently, the estimated survival functions using GCSE and PH are similar and yielded reasonable estimates.
%give reasonable result.
%estimation.

 \begin{figure}[H]
 	\centering
 	\includegraphics[width=5.4 in, height=2.2 in]{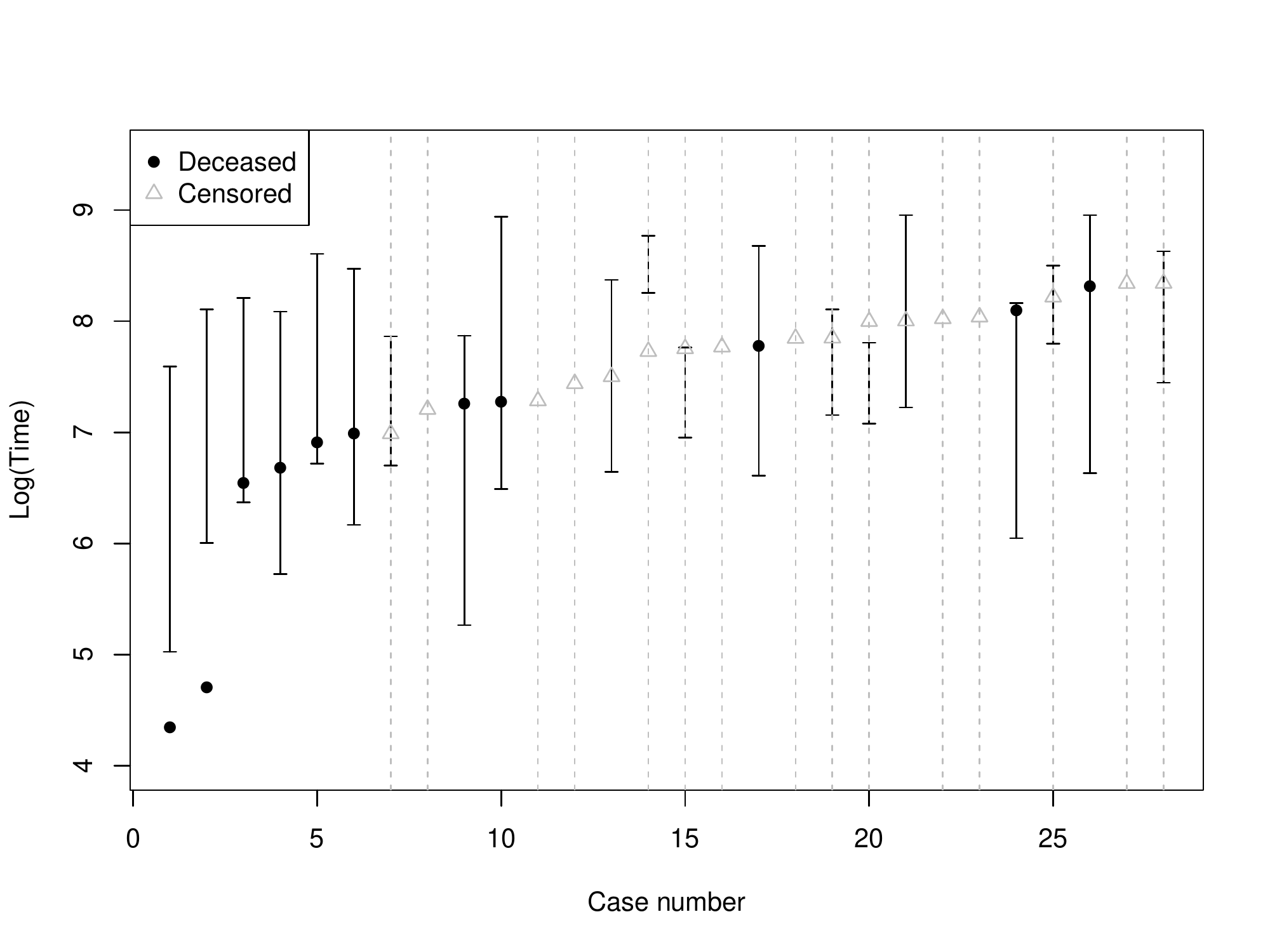}
 %\\
 %\includegraphics[width=6 in, height=2 in]{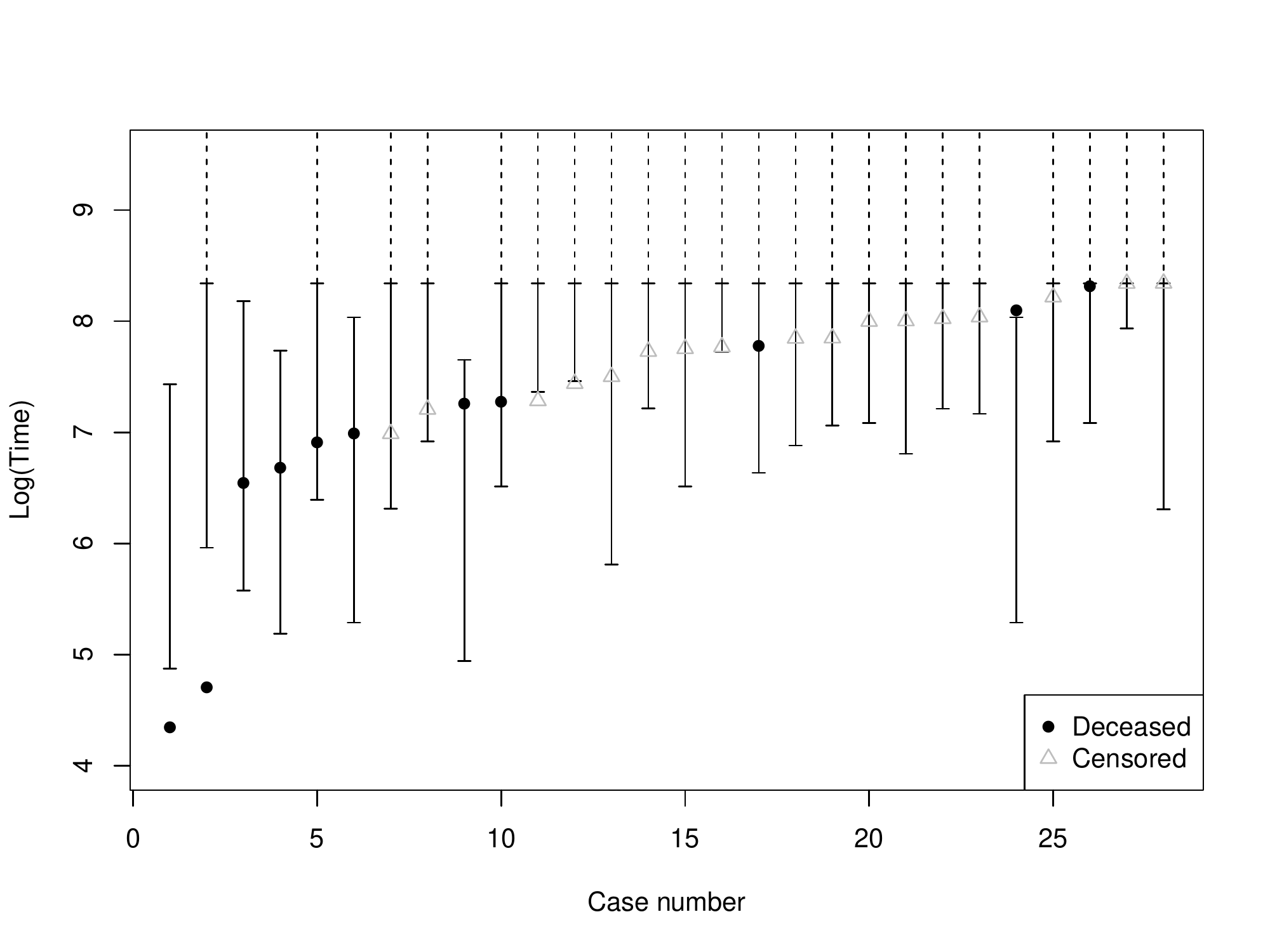}
 	\caption{GCSE prediction intervals for patients in the PBC test set. The black dot and grey triangle respectively represent actual deceased and censored patients. The dotted line indicates patients predicted as censored. Prediction intervals of logarithm of survival times were constructed for uncensored patients.}
 	\label{fig:pbc_ci}
 \end{figure}

 \begin{figure}[H]
	\centering
	\includegraphics[width=5.4 in, height=2.2 in]{}
	\caption{PH prediction intervals for patients in the PBC test set. The black dot and grey triangle respectively represent actual deceased and censored patients. All patients were predicted as censored. Prediction intervals of logarithm of survival times were constructed for uncensored patients.}
	\label{fig:pbc_ci_cox}
\end{figure}

%Figure \ref{fig:pbc_ci_cox} shows the prediction intervals based on the proportional hazards (PH) model, which also cover 9 out of the 11 uncensored cases.
%It works reasonably well in this example.

\begin{figure}[H]
	\centering
\includegraphics[width=5.4 in, height=2.2 in]{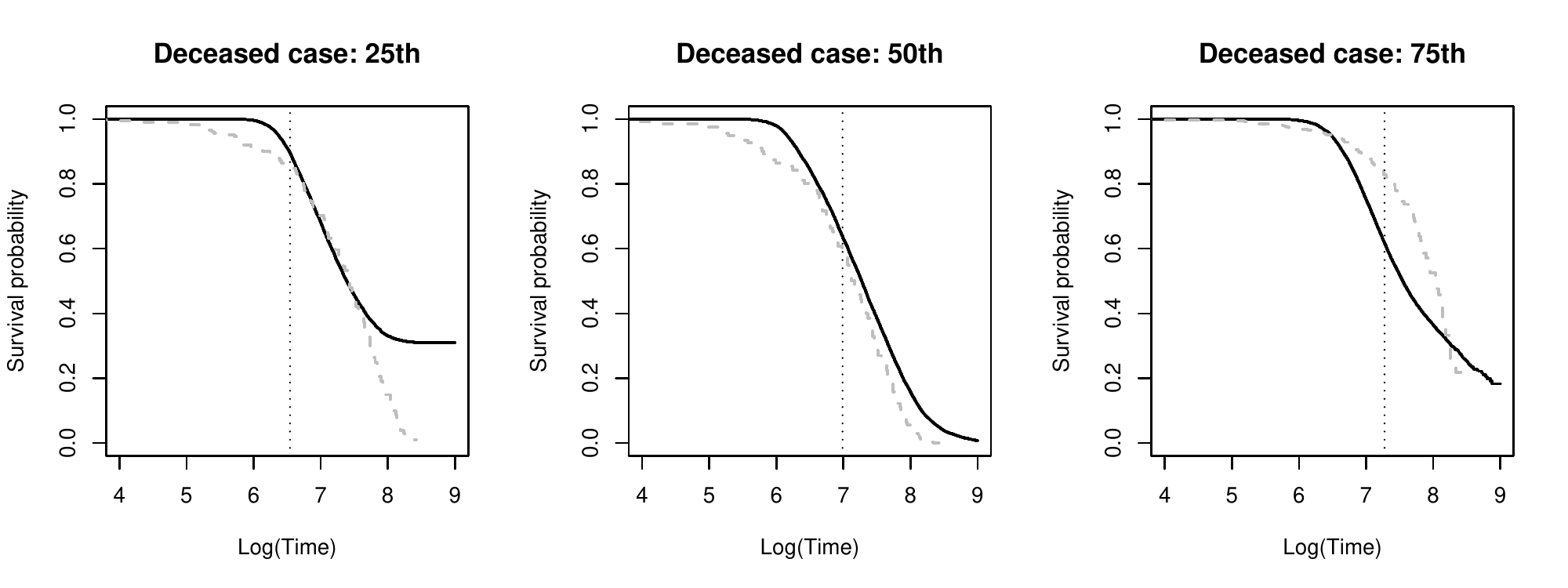}
\caption{Estimated survival function for 3 deceased patients in the PBC test set using GCSE method (black solid line) and PH model (dashed line). The vertical dotted line indicates the actual survival time of the patient.}
	\label{fig:pbc_3de_case}
\end{figure}

\subsubsection{The SUPPORT dataset}
The Study to Understand Prognoses Preferences Outcomes and Risks of Treatment (SUPPORT) dataset contains 9,105 subjects for developing and validating a prognostic model that estimates survival over a 180-day period for hospitalized adult patients diagnosed with seriously illnesses  \citep {knaus1995support}. This dataset is publicly available at \url{https://biostat.app.vumc.org/wiki/Main/DataSets} and includes predictor variables such as diagnosis, age, number of days in the hospital before study entry, presence of cancer, neurologic function, and 11 physiologic measures recorded on day 3 after study entry. Physicians were interviewed on day 3 and patients were followed up for 180 days after enrollment. After removing subjects with missing data and variables with more than 10\% missing data, 7,853 patients with 34 measured predictors remained for analysis. The dataset included 2,451 censored subjects, resulting a censoring rate 31\%.

We used GCSE to train a nonparametric model for predicting patient survival time.
% using the predictor variables measured in the study.
%Our target variables are the time to event and death marker.
%All other variables are used as predictors.
Quantitative predictor variables were standardized and categorical variables were transformed into binary dummy variables. We implemented feedforward neural networks with 2 hidden layers for the generator and the discriminator, both with  $\{60, 30\}$ nodes.
We considered the Elu activation function with $\alpha = 0.3$ for the hidden layers as well as noise vector $\eta \sim N(\0, \mathbf{I}_{20}).$
%The model is trained for 5000 updates of generator. The discriminator is updated 5 times per update %of generator.

%In Figure \ref{fig:hr_deathprob}, the estimated death probability is based on the case in testing set. %We generate conditional samples and calculate the Keplan-Meier estimator of survival probability and %estimate the death probability at the observed time. The truth is the indicator that whether the patient %is deceased or censored.

\iffalse
\begin{comment}
Figure \ref{fig:support_ci} shows the 90\% prediction intervals for the survival times of the patients in the test set. For patients with severe conditions and shorter survival times, we can see the prediction intervals in the left part of the plot are shorter and more reliable. For patients who survive longer, they are more likely to be censored due to end of study and have wider prediction intervals.
When we generate conditional samples for patients in the test set, if more than half of the generated samples are censored, it is predicted to be censored. The gray dashed lines indicate these cases. Even if a case is predicted to be censored, we can still calculate the prediction interval as long as there are some non-censored generated samples. Out of 786 cases in the test set, 242 cases are actually censored, in which 174 cases (70\%) are predicted to be censored. For the 544 uncensored patients, 81 (16\%) are predicted to be censored. Also, among the 544 uncensored cases,
the observed survival times of the 457 (84\%) cases are contained in the prediction intervals.
%Including both the correct censoring status predictions and the uncensored cases, the correct %prediction rate is $(172+414)/786=75\%.$
\end{comment}
\fi

\begin{figure}[H]
	\centering
	\includegraphics[width=5.4 in, height=2.2 in]{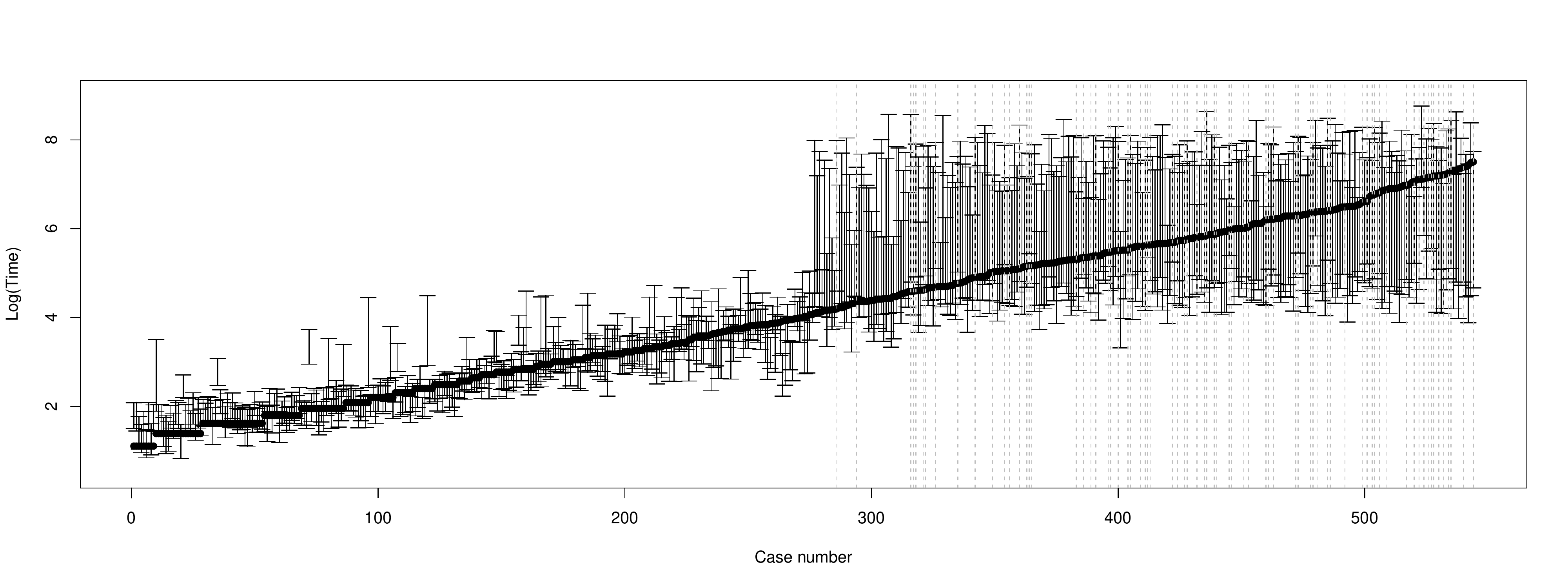}
\caption{The SUPPORT dataset: prediction intervals using GCSE for the survival times of deceased patients in the test set. The gray dashed lines indicate subjects predicted to be censored.}
\label{fig:support_ci_de}
\end{figure}

\begin{figure}[H]
	\centering
	\includegraphics[width=5.4 in, height=2.2 in]{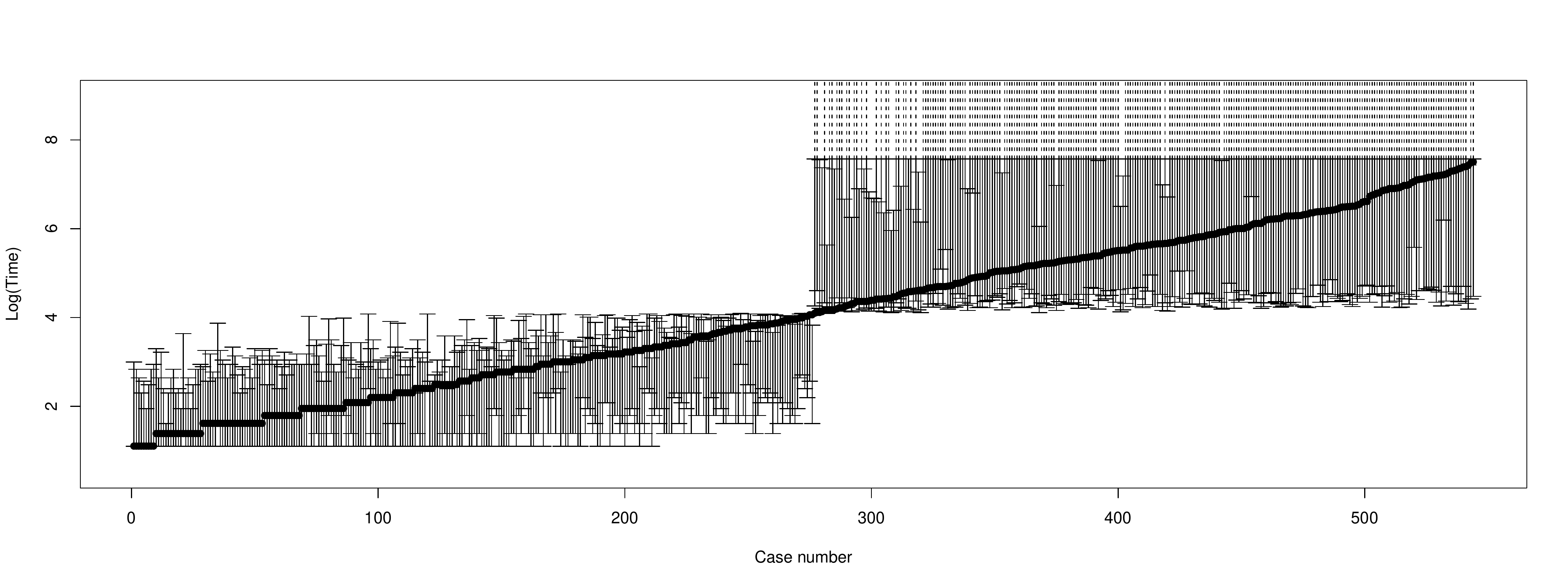}
\caption{SUPPORT data: prediction intervals using PH model for the survival times of deceased patients in the test set.}
\label{fig:support_ci_cox_de}
\end{figure}

Figures \ref{fig:support_ci_de} shows prediction results based on the GCSE method for patients in the test set, with those predicted as censored represented by grey dashed lines and 90\% prediction intervals of survival times constructed for those predicted as uncensored. Patients with predicted censoring probability higher than 50\% were classified as censored. The algorithm generates prediction intervals as long as the predicted censoring probability is less than 100\%, hence we observe some prediction intervals among the censored patients in Figure \ref{fig:support_ci_de}.
%When we generate conditional samples for patients in the test set, if more than half of the samples are censored, it is predicted to be censored.
% Out of 786 cases in testing set, 242 cases are actually censored, in which 174 cases are predicted to %be censored. For the remaining 544 uncensored patients, 87 are predicted to be censored and 414 %cases are contained in the prediction intervals.
For patients with shorter survival times and more severe health conditions, prediction intervals in lower left of the plot are noticeably narrower and thus more reliable. In contrast, patients who survive longer are more likely to be censored and hence resulting much wider prediction intervals.
Similarly, Figure \ref{fig:support_ci_cox_de}
%and \ref{fig:support_ci_cox_ce},
 shows prediction results based on the PH model for patients in the test set. All patients were predicted as uncensored since the PH model cannot make predictions on patient censoring status.
%COX covered 715 cases out of 786 cases in testing set.
Comparing to the GCSE prediction intervals, those generated by the PH model tend to be wider.
%and thus less informative

%Figure \ref{fig:support_ci_de} based on  GCSE and  Figure \ref{fig:support_ci_cox_de} based on the PH model have a similar feature. Both Figure \ref{fig:support_ci_de} and \ref{fig:support_ci_cox_de} appear to be divided into two parts (left and right where the cut off point is $y = \log(60)=4.09$).
Furthermore, both Figures \ref{fig:support_ci_de} and \ref{fig:support_ci_cox_de} show drastically different prediction intervals above and below some breakpoint at $log({\rm Time})=4.09$, or survival time 60 days. A close examination on the dataset reveals that a particular variable
 %\emph{sfdm2}
 describing the status of patient functional disability
 % in two months
exhibits strong predictive power on survival times, such that patients with diminished functional ability tend to live shorter. We also observe some important differences between Figures \ref{fig:support_ci_de} and \ref{fig:support_ci_cox_de}. First, the PH prediction intervals jumped at the breakpoint but remained relatively constant below and above the breakpoint. This is likely due to the proportional hazards assumption as well as the strong influence of the status of patient functional disability. On the other hand, the GCSE prediction intervals wrapped around the point estimate for cases below the break point and also exhibited more variance across the cases. Second, due to censoring,
%Another problem worth noticing is that
the estimated survival probabilities based on the PH model for the cases with survival time longer than 60 days ($y>4.09$) located at the upper right part of the plot have a minimum value of 0.3, making it infeasible to construct the 90\% prediction intervals. Instead, we used the longest survival time as the upper bound of the prediction intervals, hence we observe a constant upper bound for patients who survived longer than 60 days.
%This is why the PH prediction intervals tend to have the same upper limit for the patients with $y>4.09.$
The estimated survival probabilities based on the GCSE method did not have this issue and were not affected by censoring as severely as the PH model, and hence was possible to construct appropriate 90\% prediction intervals.

\iffalse
\begin{comment}
\begin{figure}[H]
	\centering
	\includegraphics[width=5.0 in, height=1.5 in]{}
	\caption{SUPPORT confidence interval by GCSE on all censored cases in testing set.}
\label{fig:support_ci_ce}
\end{figure}

\begin{figure}[H]
	\centering
	\includegraphics[width=5.0 in, height=1.5 in]{}
	\caption{SUPPORT confidence interval by COX on all censored cases in testing set.}
	\label{fig:support_ci_cox_ce}
\end{figure}
\end{comment}
\fi

\begin{figure}[H]
	\centering
	\includegraphics[width=5.4 in, height=2.2 in]{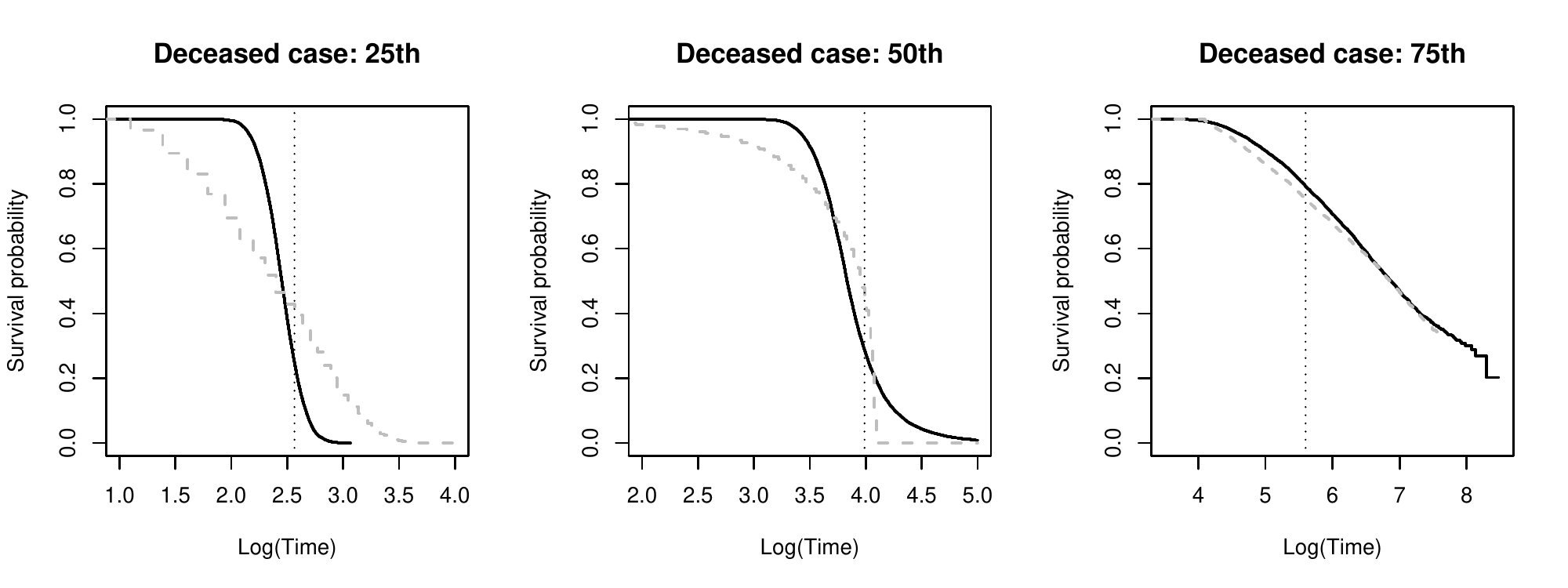}
	\caption{Survival function comparison for 3 deceased cases in SUPPORT testing set. The black solid line is estimation of GCSE. The gray dashed line is estimation by PH. The dotted line is the true time.}
\label{fig:support_3de_case}
\end{figure}

Figure \ref{fig:support_3de_case} compares survival functions estimated by the GCSE method and the PH model for three patients in
the test set. The survival times of these patients are the 25\textsuperscript{th}, 50\textsuperscript{th}, 75\textsuperscript{th} quantiles of the survival times for patients in the test set.

\section{Conclusion}
We proposed the GCSE method, a generative approach for estimating the conditional hazards and survival functions with censored survival data.
% of a survival time given the value of a covariate.
%The proposed method
Comparing to existing semiparametric regression models for censored survival data, GCSE is nonparametric and model-free since it does not impose any parametric assumptions on how the conditional hazards or survival functions depend on covariates.
%With the proposed approach, we
The proposed method first estimates a function that can be used to sample from  the conditional distribution of the observed censored response given the covariates, and then for a given value of the covariate,  it generates samples from the estimated conditional generator
and subsequently constructs the Kaplan-Meier and Nelson-Aalen estimators of the conditional hazard and survival functions.
%The proposed method
%GCSE leverages the power of neural networks for approximating multi-dimensional functions and the %recently developed stochastic gradient descent algorithms for computation.

We have focused on estimating the conditional hazards and survival functions for constructing prediction intervals of survival times.
Several problems deserve further study in the future.
%First,  it would be interesting to consider incorporating additional information such as sparsity and %latent low dimensional structure in the high-dimensional settings.
%Second,
For example,
it would be interesting to study ways for estimating treatment effects by including a semiparametric component in the proposed framework. It would also be interesting to consider incorporating additional information such as sparsity and latent low dimensional structure to mitigate the curse of dimensionality in the high-dimensional settings.

\section{Proofs}

In this section we prove the results stated in Section 3.

\subsection{Preliminary lemmas}
We first need the following lemmas.

\begin{lemma}
	\label{lem1a}
	Let  $(L_1, W_1)$ of $D_{\vphi}$ and $(L_2, W_2)$ of $G_{\vtheta}$ be specified such that
	$W_1 L_1=\ceil*{\sqrt{n}}$ and $W_2^2 L_2= c q n $ for some constants $12\leq c\leq 384$.
	Suppose that the conditional generator satisfies $\|G_{\vtheta}\|_{\infty} \le 1+\log n.$
	Then, under  Assumptions \ref{asp2} and \ref{asp3}, we have
	\begin{align*}
	\mathbb{E}_{\hG}d_{\cF_B^1}(P_{X,\hG}, P_{X,Y, \Delta}) \le
	%\precsim
	C_0 (d+2)^{{1}/{2}} n^{-{1}/{(d+2)}}\log n,
	\end{align*}
	where $C_0$ is a constant independent of $(n, d)$. Here $\mathbb{E}_{\hG}$ represents the expectation with respect to the randomness in $\hG.$
\end{lemma}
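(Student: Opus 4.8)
The plan is to adapt the now-standard error analysis for Wasserstein conditional generative learning (as in \cite{lzjh2021}) to the present setting, where the ``response'' $(Y,\Delta)$ has a discrete second coordinate and $(X,Y)$ is unbounded. Write $\cG$ and $\cD$ for the generator and discriminator network classes with the prescribed architectures, let $\hG\in\cG$ be the empirical minimax solution, and let $\Ebb_n$ denote the average over the sample $\{(X_i,Y_i,\Delta_i,\eta_i)\}_{i=1}^n$. For any $f\in\cF^1_B$ choose $D_f\in\cD$ with $\|f-D_f\|_\infty$ small and decompose
\begin{align*}
\Ebb f(X,\hG(\eta,X)) - \Ebb f(X,Y,\Delta)
 &\le \underbrace{\big[\Ebb D_f(X,\hG(\eta,X)) - \Ebb_n D_f(X_i,\hG(\eta_i,X_i))\big]}_{\mathrm{(I)}} \\
 &\quad + \underbrace{\big[\Ebb_n D_f(X_i,\hG(\eta_i,X_i)) - \Ebb_n D_f(X_i,Y_i,\Delta_i)\big]}_{\mathrm{(II)}} \\
 &\quad + \underbrace{\big[\Ebb_n D_f(X_i,Y_i,\Delta_i) - \Ebb D_f(X,Y,\Delta)\big]}_{\mathrm{(III)}} + 2\|f-D_f\|_\infty.
\end{align*}
By the definition of $\hG$, $\mathrm{(II)}\le\inf_{G\in\cG}\sup_{D\in\cD}\cL_n(G,D)\le\sup_{D\in\cD}\cL_n(G^\dagger,D)$ for any reference network $G^\dagger\in\cG$; $\mathrm{(I)}$ and $\mathrm{(III)}$ are uniform fluctuations over $\cD\circ\cG$ and $\cD$; and the last term is the discriminator approximation error. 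Taking $\sup_{f\in\cF^1_B}$ and then $\Ebb_{\hG}$, and using that $\cD$ is contained in a bounded multiple of $\cF^1_B$ (so $\sup_{D\in\cD}\cL_n(G^\dagger,D)$ reduces to $d_{\cF^1_B}(P_{X,G^\dagger},P_{X,Y,\Delta})$ plus fluctuations), this leaves three pieces to control: a \emph{discriminator approximation error}, a \emph{stochastic (generalization) error}, and a \emph{generator approximation error}.

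Since $(X,Y)$ is unbounded, I would first truncate to $\{\|(X,Y)\|\le R_n\}$ with $R_n\asymp\log n$: Assumption \ref{asp2} makes the discarded probability $O(n^{-1})$ up to logarithmic factors, negligible against the target rate, while on the truncated region all relevant quantities are $O(\log n)$, consistent with the clipping constraint $\|G_{\vtheta}\|_\infty\le 1+\log n$. For the discriminator approximation error I would invoke a ReLU approximation bound for Lipschitz functions on a cube in $\real^{d+2}$: a network with $W_1L_1=\ceil*{\sqrt n}$ approximates any $f\in\cF^1_B$ uniformly within $C\,(d+2)^{1/2}(W_1L_1)^{-2/(d+2)}\asymp (d+2)^{1/2}n^{-1/(d+2)}$, the factor $(d+2)^{1/2}$ being the usual dimensional constant. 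For the stochastic error I would bound $\mathrm{(I)}$ and $\mathrm{(III)}$ by the Rademacher complexities of $\cD$ and $\cD\circ\cG$, controlled through covering-number bounds for ReLU classes (of order the number of weights, up to logarithmic factors) via a Dudley chaining argument on the truncated domain; the architecture choices $W_1L_1=\ceil*{\sqrt n}$ and $W_2^2L_2=cqn$ are calibrated precisely so that this term is also $O(n^{-1/(d+2)}\log n)$.

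The main work, and the step I expect to be the principal obstacle, is the generator approximation error: producing $G^\dagger\in\cG$ with $W_2^2L_2=cqn$ such that $d_{\cF^1_B}(P_{X,G^\dagger},P_{X,Y,\Delta})=O(n^{-1/(d+2)}\log n)$. The noise-outsourcing lemma (Theorem 5.10 in \cite{kall2002}) supplies a \emph{measurable} $G^*$ with $P_{X,G^*}=P_{X,Y,\Delta}$, but $G^*$ carries no regularity and its second coordinate is $\{0,1\}$-valued, so a pointwise approximation is impossible. Instead I would build, on a fine mesh of the truncated support of $(X,Y,\Delta)$, a piecewise-linear transport map pushing $P_X\otimes P_\eta$ to a distribution matching $P_{X,Y,\Delta}$ cell by cell, bounding the $W_1$-transport cost by the mesh size, and then realize this map by a ReLU network whose parameter count scales like $qn$ --- this is where the absolute continuity of $P_\eta$ (Assumption \ref{asp3}) and the noise dimension $q$ enter. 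The discrete coordinate $\Delta$ is handled by letting the corresponding network output move from $0$ to $1$ on a thin transition slab of vanishing $P_\eta$-mass; since $\Delta$ contributes only a bounded coordinate this adds only $O(\mathrm{mesh})$ to the $W_1$-distance. Optimizing the mesh against the truncation radius $R_n$ yields the claimed rate.

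Finally I would collect the three bounds, take expectation over $\hG$ (equivalently over the sample and the auxiliary noise), and choose the truncation level; the delicate bookkeeping is to make the discriminator-approximation, generalization, and generator-approximation rates all collapse to the same $(d+2)^{1/2}n^{-1/(d+2)}\log n$ under the single pair of constraints $W_1L_1=\ceil*{\sqrt n}$, $W_2^2L_2=cqn$, with all constants tracked to be independent of $n$ and $d$, which gives the stated bound with a universal $C_0$.
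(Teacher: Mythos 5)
The paper's entire proof of this lemma is a single sentence: ``Lemma \ref{lem1a} follows from Theorem 3.1 of \citet{lzjh2021}.'' There is no argument written out at all; the result is invoked as a black box. Your proposal is therefore not an alternative route so much as a reconstruction, from scratch, of the argument that the cited Theorem 3.1 itself uses. And that reconstruction is faithful: the three-way split into discriminator-approximation error, stochastic/generalization error, and generator-approximation error; the truncation at radius $R_n\asymp\log n$ justified by the tail condition in Assumption \ref{asp2} and aligned with the clipping constraint $\|G_{\vtheta}\|_\infty\le 1+\log n$; ReLU approximation of $1$-Lipschitz functions on a cube yielding $(W_1L_1)^{-2/(d+2)}$; covering-number control of the empirical process over $\cD$ and $\cD\circ\cG$; and a transport-map realization of the noise-outsourcing $G^*$ on a mesh, realized by a ReLU network of parameter count $\sim W_2^2L_2=cqn$, is precisely the framework of that reference.

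The one place you add genuine content beyond a verbatim cite is the mixed-type target: $\Delta$ is $\{0,1\}$-valued, and a literal application of \cite{lzjh2021} (which treats a continuous response) needs a remark. Your device --- a thin $P_\eta$-mass transition slab for the second generator output, contributing only $O(\mathrm{mesh})$ to the $W_1$-cost because $\Delta$ is bounded --- is the right fix, and is implicitly what makes the paper's citation legitimate. Two small cautions worth recording: first, this relies on $\Delta$ being bounded; a discrete coordinate with unbounded range would break the $O(\mathrm{mesh})$ bound. Second, note that $\hG$ in the lemma statement denotes the raw empirical minimizer, before the thresholding $\hG_{2n}=I\{G_{\hat\vtheta_{2n}}\ge 0.5\}$ applied in (\ref{3}); your decomposition correctly works with the unthresholded output, which is what $\mathbb{E}_{\hG}d_{\cF_B^1}(P_{X,\hG},P_{X,Y,\Delta})$ refers to, but it is worth being explicit that the thresholding step is analyzed separately elsewhere (Lemma \ref{lem2a} handles it via the continuous mapping theorem). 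With those caveats your sketch is a correct and substantially more informative account than the paper's one-line citation.
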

Lemma \ref{lem1a} follows from Theorem 3.1 of \citet{lzjh2021}.

{\color{black}
We now derive an error bound for the  generator
$\tG_{n}=(G_{\hat{\vtheta}_{1n}}, G_{\hat{\vtheta}_{2n}}),$ where
$(\hat{\vtheta}_{1n}, \hat{\vtheta}_{2n})$  is defined in (\ref{2}), that is,
$$
(\hat{\vtheta}_{1n},\hat{\vtheta}_{2n}, \hat{\vphi}_n)=\argmin_{\vtheta_1, \vtheta_2}\argmax_{\vphi}\cL_n(G_{\vtheta_1}, G_{\vtheta_2}, D_{\vphi}).
$$
Also, recall that the conditional generator $\hG_n$
%for the conditional distribution $P_{(Y, \Delta)|X=x}$ }itself is given
is defined in (\ref{3}),  that is,
$$
\hG_n = (\hG_{1n}, \hG_{2n})=(G_{\hat{\vtheta}_{1n}}, 1\{ G_{\hat{\vtheta}_{2n}}\geq 0.5 \}),
$$
where $1\{\cdot\}$ is an indicator function
%$\hG_{2n}^* = \hG_{\hat{\vtheta}_{2n}}$ with $\hat{\vtheta}_{2n}$ defined in (\ref{2}).
}

% To accommodate the value of the censoring indicator, we let  $G_{\theta_2}$  has a sigmoid %output activation function, {\color{red} which has value between $0$ and $1$}. And we %determine   $G_{\theta_1}$  is censored when  the value of $G_{\theta_2}$  is smaller than 0.5, %that is, for any $x\in\cX$ and conditional generator estimator $\hat{G}_n$, we have %$(\hat{Y}^x,\hat{\Delta}^x)=(\hat{G}_{1n}(\eta,x), 1(\hat{G}_{2n}(\eta, x)\geq 0.5)).$

%The following theorem shows the convergence in distribution of the conditional generator
%$\widehat{G}_n.$

{\color{black}
\begin{lemma}
	\label{thm1}
	Let  $(L_1, W_1)$ of $D_{\vphi}$ and $(L_2, W_2)$ of $G_{\vtheta}$ be specified such that
	$W_1 L_1=\ceil*{\sqrt{n}}$ and $W_2^2 L_2= c q n $ for a constant $12\leq c\leq 384$.
	Suppose that  Assumptions \ref{asp2} and \ref{asp3} hold and
	the conditional generator satisfies $\|G_{\vtheta}\|_{\infty} \le 1+\log n.$
	Then,  we have
	\begin{equation}
	\label{CondConverge1}
	\mathbb{E}_{\tG_n}d_{\cH_B^1}(P_{\tG_n(\eta,X)}, P_{Y,\Delta})
	% \{ \Ebb_{(\eta, X)} h(\hG_n(\eta, X))-\Ebb_{(Y, \Delta)} h(Y, \Delta)\}
	\leq C_0 (d+2)^{{1}/{2}} n^{-{1}/{(d+2)}}\log n,
	\end{equation}
	where $\cH_B^1$ is the class of bounded 1-Lipschitz functions. Here $\mathbb{E}_{\tG_n}$ represents the expectation with respect to the randomness in $\tG_n.$
\end{lemma}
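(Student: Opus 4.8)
\textbf{Proof proposal for Lemma \ref{thm1}.}
The plan is to obtain Lemma \ref{thm1} as a short corollary of Lemma \ref{lem1a}: the integral probability metric $d_{\cH_B^1}$ appearing here is taken over bounded $1$-Lipschitz test functions of the two variables $(y,\delta)$ only, and $P_{Y,\Delta}$ denotes the $X$-marginalized law of $(Y,\Delta)$, whereas Lemma \ref{lem1a} controls $d_{\cF_B^1}$ over bounded $1$-Lipschitz test functions of all $d+2$ variables $(x,y,\delta)$ for the joint laws. So the first step is to show that the former metric is dominated pointwise (conditionally on the trained generator $\tG_n$) by the latter, and then to take the expectation over the randomness of $\tG_n$ and invoke the bound already established in Lemma \ref{lem1a}.

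First I would make the class embedding explicit. Given any $h\in\cH_B^1$, define $\tilde h(x,y,\delta)\equiv h(y,\delta)$ on $\mathbb{R}^{d+1}\times[0,1]$. Then $\|\tilde h\|_\infty=\|h\|_\infty\le B$, and since $\|(y_1,\delta_1)-(y_2,\delta_2)\|\le\|(x_1,y_1,\delta_1)-(x_2,y_2,\delta_2)\|$ the function $\tilde h$ is $1$-Lipschitz on $\mathbb{R}^{d+1}\times[0,1]$; hence $\tilde h\in\cF_B^1$. Because $\tilde h$ ignores its first argument, $\Ebb_{(X,\eta)}h(\tG_n(\eta,X))=\Ebb_{(X,\eta)}\tilde h(X,\tG_n(\eta,X))$ and $\Ebb_{(Y,\Delta)}h(Y,\Delta)=\Ebb_{(X,Y,\Delta)}\tilde h(X,Y,\Delta)$, so the difference of these two expectations is bounded by $d_{\cF_B^1}(P_{X,\tG_n},P_{X,Y,\Delta})$. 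Taking the supremum over $h\in\cH_B^1$ gives, conditionally on $\tG_n$, the deterministic inequality
\[
d_{\cH_B^1}(P_{\tG_n(\eta,X)},P_{Y,\Delta})\ \le\ d_{\cF_B^1}(P_{X,\tG_n},P_{X,Y,\Delta}).
\]

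Next I would take $\Ebb_{\tG_n}$ on both sides and apply Lemma \ref{lem1a}. This only requires checking that $\tG_n=(G_{\hat{\vtheta}_{1n}},G_{\hat{\vtheta}_{2n}})$ satisfies the hypotheses of that lemma: it is the empirical minimax solution (\ref{2}) over ReLU networks with the prescribed depths and widths ($W_1 L_1=\ceil*{\sqrt{n}}$, $W_2^2 L_2=cqn$), and it obeys the sup-norm bound $\|G_{\vtheta}\|_\infty\le 1+\log n$ by assumption. Lemma \ref{lem1a} then yields $\Ebb_{\tG_n}d_{\cF_B^1}(P_{X,\tG_n},P_{X,Y,\Delta})\le C_0(d+2)^{1/2}n^{-1/(d+2)}\log n$, and combining with the displayed inequality gives the claim with the same constant $C_0$.

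In this lemma there is essentially no substantive obstacle: the difficult part --- the oracle-inequality decomposition into approximation and stochastic errors for the conditional Wasserstein GAN, together with the ReLU approximation rates --- is entirely packaged inside Lemma \ref{lem1a}, which is imported from Theorem 3.1 of \citet{lzjh2021}. The only points that need care are (i) verifying that the embedding $h\mapsto\tilde h$ preserves both the $1$-Lipschitz property and the uniform bound $B$ when passing from $d{+}2$ variables down to two (which it does, with no loss), and (ii) noting that $P_{Y,\Delta}$ here is the marginal law of $(Y,\Delta)$, not a conditional law, so that the present reduction is only to the \emph{marginal} comparison; the refinement to the conditional law $P_{(Y,\Delta)\mid X}$, which incurs the slower logarithmic rate in \eqref{Converge2}, is handled separately in Theorem \ref{lemm2a} and does not enter here.
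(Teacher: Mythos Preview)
Your proposal is correct and is essentially identical to the paper's own proof: the paper also embeds each $h\in\cH_B^1$ as the function $f(x,w)=h(w)$, observes that this subclass lies inside $\cF_B^1$, deduces the pointwise domination $d_{\cH_B^1}\le d_{\cF_B^1}$, and then invokes Lemma \ref{lem1a}. Your write-up is in fact slightly more explicit about verifying the Lipschitz and boundedness conditions for the embedded function than the paper's version.
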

}

\begin{proof}[Proof of Lemma \ref{thm1}]
	Write $W=(Y, \Delta)$. Let $\cF_B^1$ be the class of bounded 1-Lipschitz functions on
	$\cX \times \cY \times [0, 1]$.  Let $\mathcal{H}_B^1$ be the  class of bounded 1-Lipschitz functions on
	$\cY \times [0, 1]$.
	Let $\cF_0$ be a  subclass of $\cF_B^1$ defined as
	\[
	\cF_0= \{f: f(x,w)=  h(w):  h\in \mathcal{H}_B^1,  (x, w) \in \cX \times \cY\times [0, 1] \}.
	\]
	%where $\Id$ is the identity function on $\cX,$ i.e., $\Id(x)=x, x \in \cX.$
	Then $\cF_0 \subset \cF_B^1$ and  we have
	\begin{align*}
	\sup_{\cF_B^1} \{\Ebb f(X, G(\eta, X)) - \Ebb f(X, W)\}
	& \ge \sup_{\cF_0} \{\Ebb f(X, G(\eta, X))-\Ebb f(X, W)\} \\
	&= \sup_{\mathcal{H}_B^1} \{\Ebb(h(G(\eta, X)) - \Ebb h(W)\}.
	\end{align*}
	Lemma \ref{thm1} follows from Lemma \ref{lem1a} and this inequality.
\end{proof}

To prove  Theorem \ref{lemm2a}, the following result from \citet[Theorem 2.7.1]{vw1996} is needed.
\begin{lemma}\label{lem:lip-entro}
	Let $\cX$ be a bounded, convex subset of $\real^{m}$ with nonempty interior. There exists a constant $c_{m}$ depending only on $m$ such that
	\[
	\log \mathcal{N}(\varepsilon, \cF^1(\cX),\|\cdot\|_{\infty})\leq c_{m}\lambda(\cX^1)\left(\frac{1}{\varepsilon}\right)^m,
	\]
	for every $\varepsilon>0$, where $ \cF^1(\cX)$ is the uniformly bounded 1-Lipschitz  function  class  defined on $\cX$, and $\lambda(\cX^1)$ is the Lebesgue measure of the set $\{x:\|x-\cX\|<1\}.$
\end{lemma}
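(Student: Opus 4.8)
The plan is to construct an explicit finite $\varepsilon$-net for $\cF^1(\cX)$ in $\|\cdot\|_{\infty}$ by discretizing the domain into small cubes and the range into a lattice of mesh $\varepsilon$, and then to bound the cardinality of this net by a combinatorial argument that encodes a quantized function by its value at one ``root'' cube together with the increments across a spanning tree of the cubes. I would normalize so that every $f\in\cF^1(\cX)$ is $1$-Lipschitz with $\|f\|_{\infty}\le 1$; the regime $\varepsilon\ge 2$ is trivial, since then the single function $0$ is an $\varepsilon$-net and $\log\mathcal{N}=0$, so I would assume $0<\varepsilon<2$ throughout.

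First I would fix a mesh $\delta=\varepsilon/(2\sqrt m)$ and let $\mathcal{Q}$ be the collection of axis-parallel cubes of side $\delta$ from the lattice $\delta\mathbb{Z}^m$ that meet $\cX$. Each such cube has diameter $\delta\sqrt m\le 1$, hence lies inside $\cX^1=\{x:\operatorname{dist}(x,\cX)<1\}$; since the cubes have disjoint interiors this gives $|\mathcal{Q}|\,\delta^m\le\lambda(\cX^1)$, i.e. $|\mathcal{Q}|\le(2\sqrt m)^m\lambda(\cX^1)\,\varepsilon^{-m}$. Next, for each $Q\in\mathcal{Q}$ I would pick a representative $x_Q\in Q\cap\cX$, and to a given $f$ associate the quantized values $g(Q)=$ the nearest multiple of $\varepsilon$ to $f(x_Q)$, extended to a piecewise-constant $\bar g$ on $\cX$ by $\bar g(x)=g(Q(x))$ for $x$ in the cube $Q(x)\in\mathcal{Q}$ containing it (ties broken by a fixed rule). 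Since $|x-x_{Q(x)}|\le\delta\sqrt m=\varepsilon/2$ and $|g(Q(x))-f(x_{Q(x)})|\le\varepsilon/2$, the triangle inequality and $1$-Lipschitzness of $f$ give $\|\bar g-f\|_{\infty}\le\varepsilon$, so the family of all such $\bar g$ is an $\varepsilon$-net and it remains only to count how many distinct tuples $(g(Q))_{Q\in\mathcal{Q}}$ can occur. For the count I would call two cubes adjacent when they share at least one point and observe that, because $\cX$ is convex, a segment between points of any two cubes of $\mathcal{Q}$ stays in $\cX$ and crosses a chain of pairwise adjacent cubes of $\mathcal{Q}$, so the adjacency graph on $\mathcal{Q}$ is connected; fixing a spanning tree with root $Q_0$, the increment $g(Q')-g(Q)$ along any tree edge is a multiple of $\varepsilon$ of absolute value at most $2\varepsilon$ (adjacent representatives are within $2\delta\sqrt m=\varepsilon$ and $f$ is $1$-Lipschitz), hence one of at most $5$ values, while $g(Q_0)$ is one of at most $2/\varepsilon+3$ multiples of $\varepsilon$ in $[-1-\varepsilon,1+\varepsilon]$. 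Therefore $\mathcal{N}(\varepsilon,\cF^1(\cX),\|\cdot\|_{\infty})\le(2/\varepsilon+3)\,5^{|\mathcal{Q}|}$; taking logarithms, inserting the bound on $|\mathcal{Q}|$, and absorbing the lower-order term $\log(2/\varepsilon+3)\lesssim_m\varepsilon^{-m}$ into $\lambda(\cX^1)\varepsilon^{-m}$ via $\lambda(\cX^1)\ge\omega_m$ (the volume of the unit ball, available because $\cX$ has nonempty interior) would yield $\log\mathcal{N}\le c_m\lambda(\cX^1)\varepsilon^{-m}$ with $c_m$ depending only on $m$.

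The main obstacle is the geometric/combinatorial bookkeeping in the counting step: one must verify carefully that the cube adjacency graph is connected — this is exactly where convexity of $\cX$ is used — and calibrate the mesh $\delta$ in terms of $\varepsilon$ so that increments of the quantized function across adjacent cubes stay confined to a bounded, $m$-independent set of multiples of $\varepsilon$. The remaining pieces — the sup-norm approximation estimate $\|\bar g-f\|_\infty\le\varepsilon$, absorbing the lower-order $\log(1/\varepsilon)$ contribution, and disposing of the trivial range $\varepsilon\ge 2$ — are routine.
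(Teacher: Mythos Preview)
Your proof is correct and follows the classical Kolmogorov--Tikhomirov discretization argument: grid the domain into cubes of side $\asymp\varepsilon/\sqrt m$, quantize function values to a lattice of mesh $\varepsilon$, and count configurations via a spanning tree of the cube adjacency graph (whose connectedness is precisely where convexity enters). All constants and estimates check out, including the increment bound of $5$ values per edge and the absorption of the root term via $\lambda(\cX^1)\ge\omega_m$.

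For comparison, the paper does not supply its own proof of this lemma at all: it is simply quoted as Theorem~2.7.1 of van der Vaart and Wellner (1996). Your argument is in fact the same one given there, so in substance you have reproduced the reference's proof rather than taken a different route.
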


\subsection{Proof of Theorem \ref{lemm2a}}

\begin{proof}[Proof of Theorem \ref{lemm2a}]
	For  $h \in \cH_B^1$, let
$$H_n(X)=\Ebb(h(\tG_{n})|X) \ \text{ and } \
H^{\star}(X)=\Ebb(h(Y,\Delta)|X).
$$
Denote by $H_n(x)$ when $H_n(X)$ has $X=x$. Then
	$H_n(x)$ is defined on $\cX$ and has $\|H_n\|_{\operatorname{Lip}}\leq K$ based on $\|\tG_{n}\|_{\operatorname{Lip}}\leq K$, where $K$ can take value   $\sqrt{2}K_1$. By Proposition 11.2.3 of \cite{dudley2018real},  $H_n$ can be extended to $\real^{d}$ while keeping   the same Lipschitz constant and supremum norm.
	Since $H_n(X)$ is uniformly bounded, it follows by Prohorov's theorem that there  exists a subsequence  $\{(X,  \tG_{n_m}, H_{n_m}(X))\}$ converging in distribution to  $(X, Y,\Delta, V)$ for a random variable $V$.
	For convenience, relabel $\{(X, \tG_{n_m}, H_{n_m}(X))\}$  by $\{(X,  \tG_{n}, H_{n}(X))\}$.  For any bounded continuous function $\phi$, we have
	$$\Ebb [(h(Y,\Delta)-V)\phi(V,X)]=\lim_{n\rightarrow 0}\Ebb[(h(\tG_n)-H_{n}(X))\phi(H_{n}(X), X)]=0.$$
	Since   measurable function can be approximated by  continuous function,  it then follows $\Ebb (h(Y,\Delta)|V,X)=V$, for which  $H^{\star}(X)=\Ebb(V|X)$ is derived.

For any  $\nu>0$, there exists a closed ball $O$ such that  $P(X\in O^{c})\leq\nu.$ Let $O^1=\{x\in\real^d:d(x,O)\leq 1\}$, where $d(\cdot, \cdot)$ denotes the metric induced from the Euclidean norm.
	On the set $O^1$, it follows  by Arzela-Ascoli theorem that there exists a subsequence  $\{H_{n_{l}}\}$  converging uniformly to a function $g_\nu$ with  $\|g_\nu\|_{\operatorname{Lip}}\leq K$. Let $q(x)=\max\{1-d(x, O), 0\}$. Then, $1_{O}\leq q(x)\leq 1_{O^1}$  and $\|q(x)\|_{\operatorname{Lip}}\leq1.$ We have
	\begin{equation*}
	\begin{split}
	\Ebb\big[\{V-g_\nu(X)\}^2\big] &=\Ebb\big[\{V-g_\nu(X)\}^21_{O}\big] +\Ebb\big[\{V-g_\nu(X)\}^21_{O^c}\big]
	\\& \leq \Ebb\big[\{V-g_\nu(X)\}^2q(X)\big] +4B^2P\big(X\in O^{c}\big)
	\\&=\lim_{l\rightarrow \infty} \Ebb\big[\{H_{n_{l}}(X)- g_\nu(X)\}^2q(X)\big] +4B^2P\big(X\in O^{c}\big)
	\\&\leq 4B^2\nu.
	\end{split}
	\end{equation*}
	By taking a sequence $\{v_m\}$ converging to $ 0$,  we show that $V$ can be approximated by a sequence of function $\{g_{m}\}$ of $X$ satisfying $\|g_{m}\|_{\operatorname{Lip}}\leq K$.
	This together with $H^{\star}(X)=\Ebb(V|X)$ implies that %demonstrates
 $V=H^{\star}(X)$. In addition, $H^{\star}(X)$ can be approximated by a sequence of  Lipschitz functions $\{g_{m}\}$, that is,
	\begin{equation}\label{spproxiamte}
	\begin{split}
	\Ebb\big[\{H^{\star}(X)-g_m(X)\}^2\big]  \leq \nu_m.
	\end{split}
	\end{equation}
	% It then  follows
	% $$\lim_{n\rightarrow\infty} \Ebb\big[\{H_n(X)-H^{\star}(X)\}^2\big]= \Ebb\big[\{V-H^{\star}(X)\}^2\big]=0.$$
	%Therefore, for any $\varepsilon>0$, we have
	%\begin{equation}
	%P(|\Ebb(h(\hat{G}_n)|X)-\Ebb(h(Y,\Delta)|X)|>\varepsilon)= P(|H_n(X)-H^{\star}(X)|>\varepsilon)\rightarrow 0.
	% \end{equation}
	%
	
	By Lemma \ref{lem1a}, we can select a sequence $\{c_n\}$ satisfying $r_n/c_n=o(1)$  and $c_n=o(1)$ such that $P_{\tG_n}(d_{\cF_B^1}(P_{X,\tG_n}, P_{X,Y, \Delta})>c_n)=o(1), $ 	where $r_n=  (d+2)^{{1}/{2}} n^{-{1}/{(d+2)}}\log n$. Let $L_n=\{d_{\cF_B^1}(P_{X,\tG_n}, P_{X,Y, \Delta})\leq c_n\},$  we have $P_{\tG_n}(L_n)\rightarrow 1.$

	We first work  on  a given generator $\tG_n\in L_n$. For  $h \in \cH_B^1$, let $\hat{H}_n(X)=\Ebb_{\eta}\big\{h(\tG_n(\eta,X))\big\}$. Denote by $\hat{H}_n(x)$ when $\hat{H}_n(X)$ has $X=x$. We have $\|\hat{H}_n\|_{\infty}\leq B$ and   $\|\hat{H}_n\|_{\operatorname{Lip}}\leq K$
	based on $\|\tG_{n}\|_{\operatorname{Lip}}\leq K$. It  then follows
	\begin{equation}\label{eq:Hn}
	\begin{split}
	\Ebb\big[\hat{H}_n(X)h(\tG_n)-\hat{H}_n(X)h(Y,\Delta) \big] =\Ebb\big[\hat{H}_n(X)^2-\hat{H}_n(X)H^{\star}(X) \big] \leq (K+1)Bc_n.
	\end{split}
	\end{equation}
	And
	\begin{equation*}
	\begin{split}
	\big|\Ebb\big[g_m(X)h(\tG_n)-g_m(X)h(Y,\Delta) \big] \big|=\big|\Ebb\big[g_m(X)\hat{H}_n(X) -g_m(X)H^{\star}(X) \big]\big| \leq (K+1)Bc_n.
	\end{split}
	\end{equation*}
	This together with     \eqref{spproxiamte} shows
	\begin{align*}
	&\Ebb\big[H^{\star}(X)^2 -H^{\star}(X)\hat{H}_n(X)\big]\\
	&=\Ebb\big[(H^{\star}(X)-g_m(X))(H^{\star}(X)-\hat{H}_n(X))  \big] -\Ebb\big[g_m(X)(\hat{H}_n(X)-H^{\star}(X))  \big]
	\\&\leq 2B\Ebb\big[\{H^{\star}(X)-g_m(X)\}^2\big]^{1/2}+(K+1)Bc_n
	\\&\leq 2B\nu_m^{1/2}+(K+1)Bc_n.
	\end{align*}
	Since $\nu_m\rightarrow 0$, it follows that
	\begin{equation}\label{eq:Hstar}
	\Ebb\big[H^{\star}(X)^2 -H^{\star}(X)\hat{H}_n(X)\big]   \leq  (K+1)Bc_n.
	\end{equation}
	Combining \eqref{eq:Hn} and \eqref{eq:Hstar}, we have
	$
	\Ebb\big[\{ \hat{H}_n(X) - H^{\star}(X) \}^2  \big]  \leq 2(K+1)Bc_n.
	$
	Therefore, for $h\in\cH_B^1$, we show
	\begin{equation}\label{eq:probability}
	\begin{split}
	P\big( \big|\hat{H}_n(X)-H^{\star}(X)\big|>\varepsilon \big)\leq \frac{2(K+1)Bc_n}{\varepsilon^2} .
	\end{split}
	\end{equation}
	%
	%
	% For any $\varepsilon>0,$   let $A_n=\{H_n(x)>\varepsilon\}$ and let $\{a_n\}$ be a sequence   diverging to infinity.
	%By Theorem 1.1 of \cite{billing},  there exists open set $O_n$ and closed set $E_n$ such that  $E_n\subseteq A_n \subseteq O_n$ and $P(X\in O_n \backslash E_n)\leq  a_n^{-1}.$ We  define $g_n(x)= \max\{1-a_nd(x,E_n), 0\}$. Then, $1_{E_n}\leq g_n\leq 1_{E_{n,a_n}}$ and $\|g_n\|_{\operatorname{Lip}}\leq a_n$, where the set $E_{n,a_n}=\{x: d(x,E_n)\leq a_n^{-1}\}.$ Hence from $|H_n(X)|\leq 2B$  we have
	%\begin{equation}\label{eq:con_eq}
	%\begin{split}
	%  \Ebb\{H_n(X)1_{A_n}(X)\}&=\Ebb\{H_n(X)\{1_{A_n}(X)-1_{E_n}(X)+1_{E_n}(X)-g_n(X)+g_n(X)\}\}
	%  \\&\leq 2B P(X\in A_n\backslash E_n)+2BP(X\in E_{n,a_n}\backslash E_n)+\Ebb\{H_n(X) g_n(X)\}
	%  \\&\lesssim  a_n^{-1}  + a_nc_n.
	%\end{split}
	%\end{equation}
	
	For  a sequence $\{b_n\}$  diverging to infinity, let $K_n=\{|y|\leq b_n\}$ and $K_{1,n}=\{|y|\leq b_n-1\}$.    Assumption \ref{asp2} shows
$$P(Y\in K_n^{c})\leq O\Big(b_n^{-1}\exp(-\frac{b_n^{1+\gamma}}{d+2})\Big)$$
 and
 $$P(Y\in K_{1,n}^{c})\leq O\Big((b_n-1)^{-1}\exp(-\frac{(b_n-1)^{1+\gamma}}{d+2})\Big).$$
 Let $\phi_n(y)=\max\{1-d(y, K_{1,n}),0\}$.
	Then, $1_{K_{1,n}}\leq \phi_n(y)\leq 1_{K_{n}}$ and   $\|\phi_n\|_{\operatorname{Lip}}\leq 1$. Since    $P(G_{\hat{\vtheta}_{1n}}\in K_n) \geq \Ebb \phi_n (G_{\hat{\vtheta}_{1n}})$, $\Ebb \phi_n (Y)\geq P(Y\in K_{1,n})$, and $|\Ebb  \phi_n (G_{\hat{\vtheta}_{1n}})-\Ebb \phi_n (Y)|\leq c_n$, we have
$$ P(G_{\hat{\vtheta}_{1n}}\in K_n^{c})\leq
 O\Big((b_n-1)^{-1}\exp(-\frac{(b_n-1)^{1+\gamma}}{d+2})+ c_n\Big).$$
	%O(n^{-\frac{(\log n)^{\gamma}}{d+2}}/\log n)$
	Let $B_n=\{G_{\hat{\vtheta}_{1n}}, Y\in  K_n\}$. Then, $P(B_n^c)\leq  P(Y\in K_n^{c})+P(G_{\hat{\vtheta}_{1n}}\in K_n^{c})$.  Lemma \ref{lem:lip-entro} shows that  for  any $h\in\cH_B^1$ constrained on $K_n\times [0,1]$ there exists $j\in\{1,\dots, k_{n,\varepsilon}\}$ such that
$$\|h-h_j\|_{\infty}\leq \varepsilon/3\ \text{ and } \
k_{n,\varepsilon}\leq   \exp(C_1b_n/\varepsilon^2)$$
 for some constant $C_1$. On the set $B_n$, since
 $$\big|\Ebb_{\eta}(h(\tG_{n}(\eta, X))) -\Ebb(h(Y,\Delta)| X) -\{\Ebb_{\eta}(h_j(\tG_{n}(\eta, X)))-\Ebb(h_j(Y,\Delta)| X)\}\big|  \leq 2\varepsilon/3,$$
 we have
	$$ \sup_{h\in\cH_B^1}\Ebb_{\eta}(h(\tG_{n}(\eta, X)))-\Ebb(h(Y,\Delta)| X)  \leq \max_{j=1,\dots, k_{n,\varepsilon}} \Ebb_{\eta}(h_j(\tG_{n}(\eta, X)))-\Ebb(h_j(Y,\Delta)| X)+ \frac{2\varepsilon}{3}. $$
	For each $j$, let $\hat{H}_{n,j}(X)=\Ebb_{\eta}(h_j(\tG_{n}(\eta, X)))$ and $H^{\star}_j(X)=\Ebb(h_j(Y,\Delta)| X)$. By \eqref{eq:probability}, we have
	\begin{equation*}
	\begin{split}
	P\big( \big|\hat{H}_{n,j}(X)-H^{\star}_j(X)\big|>\varepsilon \big)\leq \frac{2(K+1)Bc_n}{\varepsilon^2} .
	\end{split}
	\end{equation*}
	It then follows  that
	\begin{equation}
	\begin{split} \label{eq:decompo}
	&P\big(\sup_{h\in\cH_B^1} \Ebb_{\eta}(h(\tG_{n}(\eta, X)))-\Ebb(h(Y,\Delta)\mid X)> \varepsilon\big)
	\\& \leq P\big(\sup_{h\in\cH_B^1} \Ebb_{\eta}(h(\tG_{n}(\eta, X)))-\Ebb(h(Y,\Delta)\mid X) >\varepsilon\mid B_n) P(B_n)+P(B_n^{c})
	\\& \leq P\big(\max_{j=1,\dots, k_{n,\varepsilon}} \Ebb_{\eta}(h_j(\tG_{n}(\eta, X)))-\Ebb(h_j(Y,\Delta)| X)>\frac{\varepsilon}{3} \mid B_n\big)P(B_n)  +P(B_n^{c})
	\\&\leq \sum_{j=1,\dots, k_{n,\varepsilon}} P\big( \Ebb_{\eta}(h_j(\tG_{n}(\eta, X)))- \Ebb(h_j(Y,\Delta)| X)   >\frac{\varepsilon}{3} \big)  +P(B_n^{c})
	\\&\leq \sum_{j=1,\dots, k_{n,\varepsilon}}P\big( \big|\hat{H}_{n,j}(X)-H^{\star}_j(X)\big|>\frac{\varepsilon}{3}  \big)+P(Y\in K_n^{c})+P(G_{\hat{\vtheta}_{1n}} \in K_n^{c})	
	\\&\lesssim \varepsilon^{-2}\exp\{C_1b_n/\varepsilon^2\} c_n +  (b_n-1)^{-1}\exp(- (b_n-1)^{1+\gamma}/(d+2)).
	\end{split}
	\end{equation}
	Select  $b_n=o(\log c_n^{-1})$ and let $d_n=O(\varepsilon^{-2}\exp\{C_1b_n/\varepsilon^2\} c_n +  b_n^{-1}\exp(- b_n^{1+\gamma}/(d+2)))$. We show there exists a sequence $\{d_n\}$ converging to $0$  such that
	\begin{equation}\label{eq111}
	\begin{split}
	P\big(\sup_{h\in\cH_B^1} \Ebb_{\eta}(h(\tG_{n}(\eta, X)))-\Ebb(h(Y,\Delta)\mid X)> \varepsilon\big)\leq d_n.
	\end{split}
	\end{equation}
	We  now consider the randomness from $\tG_n$. To emphasis that only randomness from $X$ is investigated in \eqref{eq111},  we denote $P$ in \eqref{eq111} by $P_X$ and have
	\begin{equation}\label{eq:expect}
	\begin{split}
	&\Ebb_{\tG_n}\big\{P_X(d_{\cH^1_B}(P_{\tG_n(\eta,X)}, P_{(Y,\Delta)|X})>\varepsilon)\big\}
	\\&=\Ebb_{\tG_n}\big\{P_X\big(\sup_{h\in\cH_B^1} \Ebb_{\eta}(h(\tG_n(\eta, X)))-\Ebb(h(Y,\Delta)| X) > \varepsilon\big) \big\}
	\\&\leq \Ebb_{\tG_n}\big\{P_X\big(\sup_{h\in\cH_B^1} \Ebb_{\eta}(h(\tG_n(\eta, X)))-\Ebb(h(Y,\Delta)| X) > \varepsilon\big) |L_n\big\}+P_{\tG_n}(L_n^{c})
	\\&\leq d_n+\frac{r_n}{c_n}\rightarrow 0,
	\end{split}
	\end{equation}
	from which we demonstrate  $  d_{\cH^1_B}(P_{\tG_n(\eta,X)}, P_{(Y,\Delta)|X})\to_P 0$.  Let $A_{\varepsilon}=\{d_{\cH^1_B}(P_{\tG_n(\eta,X)}, P_{(Y,\Delta)|X})>\varepsilon\}$. It also follows by 	\eqref{eq:decompo} and \eqref{eq:expect}   that
	\begin{equation*}
	\begin{split}
	&\Ebb_{\tG_n}\big\{\Ebb_X(d_{\cH^1_B}(P_{\tG_n(\eta,X)}, P_{(Y,\Delta)|X}) )\big\}
	\\&=\Ebb_{\tG_n}\big\{\Ebb_X(d_{\cH^1_B}(P_{\tG_n(\eta,X)}, P_{(Y,\Delta)|X})1_{A_{\varepsilon}}) \big\}+\Ebb_{\tG_n}\big\{\Ebb_X(d_{\cH^1_B}(P_{\tG_n(\eta,X)}, P_{(Y,\Delta)|X})1_{A_{\varepsilon}^{c}}\big\}
	\\&\leq 2B\Ebb_{\tG_n}\big\{P_X(d_{\cH^1_B}(P_{\tG_n(\eta,X)}, P_{(Y,\Delta)|X})>\varepsilon)\big\}+\varepsilon
	\\&\lesssim \varepsilon^{-2}\exp\{C_1b_n/\varepsilon^2\} c_n +  (b_n-1)^{-1}\exp(- (b_n-1)^{1+\gamma}/(d+2)) +\frac{r_n}{c_n}+\varepsilon.
	\end{split}
	\end{equation*}
	By selecting
	$c_n=r_n\log r_n^{-1}$,  $b_n=\log\{\log r_n^{-1/4}\} $ and  $\varepsilon =\{\log r_n^{-1/4}\}^{-1/2}b_n^{1/2}$, we obtain
	\begin{equation}\label{eqmain: expectation}
	\begin{split}
	\Ebb_{\tG_n}\big\{\Ebb_X(d_{\cH^1_B}(P_{\tG_n(\eta,X)}, P_{(Y,\Delta)|X}) )\big\}
	\lesssim    \{\log r_n^{-1}\}^{-1/2}\log\{\log r_n^{-1}\}^{1/2},
	\end{split}
	\end{equation}
	which completes our proof.
\end{proof}

The next lemma shows the consistency of $\widehat H^x_n$ and $\widehat H^x_{1n}$ defined in (\ref{hHn}).
\begin{lemma}
	\label{lem2a}
	Suppose that $H^x(t)$ and $H^x_1(t)$ are continuous functions of $t$ for every $x\in \cX.$
	Then under the same assumptions and conditions of Theorem \ref{lemm2a}, we have
	\begin{align*}
	\sup_{t \in [0, \tau]} |\widehat H^X_n(t)- H^X(t)| \to_P  0
	\ \text{ and } \sup_{t \in [0, \tau]} |\widehat H^X_{1n}(t)- H^X_1(t)| \to_P  0.
	\end{align*}
\end{lemma}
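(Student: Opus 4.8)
The plan is to deduce Lemma~\ref{lem2a} from the bounded-Lipschitz convergence established in Theorem~\ref{lemm2a} by upgrading it to uniform convergence of the conditional distribution and subdistribution functions, in the spirit of Pólya's theorem (weak convergence together with continuity of the limit distribution function implies uniform convergence of distribution functions). All the work is to sandwich the relevant indicator functionals between bounded $1$-Lipschitz functions so that the metric $d_{\cH^1_B}$ can be brought to bear, and then to control the resulting discretization error via the assumed continuity of $H^x$ and $H^x_1$.

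First I would set up the sandwiching. For $t\in[0,\tau]$ and small $\epsilon>0$, let $\phi_{t,\epsilon}$ be the piecewise-linear ramp that equals $1$ on $(-\infty,t]$, equals $0$ on $[t+\epsilon,\infty)$ and is linear in between, so that $1\{y\le t\}\le\phi_{t,\epsilon}(y)\le 1\{y\le t+\epsilon\}$ and $\phi_{t,\epsilon}$ is $\epsilon^{-1}$-Lipschitz; similarly introduce ramps $\psi^{-}_{\epsilon}\le 1\{\delta\ge 1/2\}\le\psi^{+}_{\epsilon}$ on $[0,1]$, each $\epsilon^{-1}$-Lipschitz, with the break located in a window of width $\epsilon$ around $1/2$. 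Since $\widehat G_{1n}=G_{\hat{\vtheta}_{1n}}$ and the estimated indicator is $\hat\Delta^x=1\{G_{\hat{\vtheta}_{2n}}(\eta,x)\ge 1/2\}$, the pair $(\widehat G_{1n}(\eta,x),G_{\hat{\vtheta}_{2n}}(\eta,x))$ is exactly $\tG_n(\eta,x)$, and after rescaling by $\epsilon$ (respectively $\epsilon/2$) the function $\phi_{t,\epsilon}$ and the products $\phi_{t,\epsilon}\psi^{\pm}_{\epsilon}$ lie in $\cH^1_B$ once $\epsilon$ is small enough. The crucial structural fact is that the true $\Delta$ takes values in $\{0,1\}$, so for $\epsilon<1/2$ we have $\psi^{\pm}_{\epsilon}(\Delta)=1\{\Delta=1\}$ almost surely; hence both the upper and the lower sandwiching functions agree with the target on the support of $P_{(Y,\Delta)\mid X}$, which is what lets the thresholded censoring indicator pass through the argument even though $\delta\mapsto 1\{\delta\ge 1/2\}$ is discontinuous.

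Next, applying the inequality $\bigl|\Ebb_{\eta}h(\tG_n(\eta,x))-\Ebb(h(Y,\Delta)\mid X=x)\bigr|\le d_{\cH^1_B}(P_{\tG_n(\eta,x)},P_{(Y,\Delta)\mid X=x})$ (valid for every $h\in\cH^1_B$ since $\cH^1_B$ is symmetric) to these Lipschitz envelopes and chaining the sandwich inequalities yields, uniformly in $t\in[0,\tau]$,
\[
\bigl|\widehat H^X_n(t)-H^X(t)\bigr|\le \omega_{H^X}(\epsilon)+\epsilon^{-1}d_{\cH^1_B}(P_{\tG_n(\eta,X)},P_{(Y,\Delta)\mid X}),
\]
together with the analogous bound for $\bigl|\widehat H^X_{1n}(t)-H^X_1(t)\bigr|$ in which $H^X$ is replaced by $H^X_1$ and $\epsilon^{-1}$ by a constant multiple of $\epsilon^{-1}$, where $\omega_{g}(\epsilon)=\sup_{|s-s'|\le\epsilon,\,s,s'\in[0,\tau]}|g(s)-g(s')|$. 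Because $H^x$ and $H^x_1$ are continuous on the compact interval $[0,\tau]$, hence uniformly continuous, $\omega_{H^X}(\epsilon)\to 0$ and $\omega_{H^X_1}(\epsilon)\to 0$ almost surely as $\epsilon\downarrow 0$, and since these moduli are bounded by $1$, bounded convergence gives $\Ebb\,\omega_{H^X}(\epsilon)\to 0$ and $\Ebb\,\omega_{H^X_1}(\epsilon)\to 0$. Then, given $\varepsilon>0$ and $\zeta>0$, I would first fix $\epsilon$ small enough that $P(\omega_{H^X}(\epsilon)>\varepsilon/2)<\zeta$ and $P(\omega_{H^X_1}(\epsilon)>\varepsilon/2)<\zeta$ (Markov's inequality), and with this $\epsilon$ frozen invoke Theorem~\ref{lemm2a}, namely $d_{\cH^1_B}(P_{\tG_n(\eta,X)},P_{(Y,\Delta)\mid X})\to_P 0$ with respect to the joint randomness in $(X,\tG_n)$, to conclude that $P(\epsilon^{-1}d_{\cH^1_B}(\cdots)>\varepsilon/2)\to 0$. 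Combining these gives $\limsup_n P(\sup_{t\in[0,\tau]}|\widehat H^X_n(t)-H^X(t)|>\varepsilon)\le\zeta$, and letting $\zeta\downarrow 0$ finishes the proof, the same argument applying verbatim to $\widehat H^X_{1n}$.

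I expect the main obstacle to be the bookkeeping around the estimated censoring indicator: Theorem~\ref{lemm2a} controls the raw sigmoid output $\tG_n$, whereas $\widehat H^x_{1n}$ is built from the hard threshold $\widehat G_{2n}=1\{G_{\hat{\vtheta}_{2n}}\ge 1/2\}$, so one must choose the Lipschitz envelopes so that they are simultaneously valid two-sided bounds for $1\{\delta\ge 1/2\}$ and exact on $\{0,1\}$; the observation that the limiting $\Delta$ is binary, so its conditional law puts no mass in the $\epsilon$-window around $1/2$, is precisely what makes this possible. A secondary and milder point is that the moduli $\omega_{H^X}(\epsilon),\omega_{H^X_1}(\epsilon)$ are random (they depend on $X$) and need not vanish uniformly in $X$; this is handled by bounded convergence, which suffices because only convergence in probability, rather than a rate, is claimed.
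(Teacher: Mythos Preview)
Your argument is correct and complete, but it takes a genuinely different route from the paper's proof. The paper argues abstractly via subsequences: from $d_{\cH^1_B}(P_{\tG_n(\eta,X)},P_{(Y,\Delta)\mid X})\to_P 0$ it extracts, along any subsequence, a further subsequence converging almost surely; on that event and for fixed $x$, bounded-Lipschitz convergence means $\tG_n(\eta,x)\to_d (Y^x,\Delta^x)$, after which the continuous mapping theorem is applied to $f(y,\delta)=(y,1\{\delta\ge 0.5\})$ (continuous on $\mathbb{R}^+\times\{0,1\}$, a set of full limiting measure) to pass to the thresholded $\hG_n$, and finally pointwise convergence of $\widehat H^x_n,\widehat H^x_{1n}$ is upgraded to uniform convergence by P\'olya and lifted back to convergence in probability by the subsequence criterion. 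Your proof replaces this soft chain with an explicit two-sided Lipschitz sandwich, obtaining the quantitative inequality $\sup_t|\widehat H^X_n(t)-H^X(t)|\le \omega_{H^X}(\epsilon)+C\epsilon^{-1}d_{\cH^1_B}(\cdots)$, and then a standard $\epsilon$-then-$n$ argument. Both proofs exploit the same structural fact---that $\Delta$ is supported on $\{0,1\}$, so the threshold at $1/2$ causes no trouble in the limit---but you encode it directly (the ramps $\psi^{\pm}_\epsilon$ coincide with the indicator on $\{0,1\}$), whereas the paper hides it inside the hypothesis of the continuous mapping theorem. Your approach is more self-contained and yields an inequality that could in principle give rates; the paper's approach is shorter once one accepts the cited black boxes.
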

\begin{proof}[\textit{Proof of Lemma \ref{lem2a}}]
	Let $M_n=d_{\cH^1_B}(P_{\tG_n(\eta,X)}, P_{(Y,\Delta)|X})$. It follows from  Theorem \ref{lemm2a} that every subsequence  $M_{n(m)}$ there is a further subsequence  $M_{n(m_k)}$ that converges  to $0$ almost surely.  Let $\Omega=\{\omega: \lim_{k\rightarrow\infty}M_{n(m_k)}=0\}$. For   $\omega\in\Omega$, denoting $x$ the value of $X(\omega)$,  we have   $\tG_{n(m_k)}(\eta, x)$
	converges in distribution  to  $(Y^x, \Delta^x)$.   Let $f(y,\delta)=(y, 1_{\{\delta\geq0.5\}})$. Since $f$ is continuous on $\real^{+}\times \{0,1\}$, it follows by continuous mapping theorem \citep{vaart2000asymptotic} that $f(\tG_{n(m_k)}(\eta, x))=(\hat{G}_{1n(m_k)}(\eta,x), \hat{G}_{2n(m_k)}(\eta,x))$ converges in distribution to $f(Y^x, \Delta^x)=(Y^x, \Delta^x)$. By the definition of $\widehat H^x_n$ and $\widehat H^x_{1n}$ at (\ref{hHn}), we have, at any continuity point $t$ of $H^x$ and $H^x_1$,
	\begin{align*}
	\widehat H^x_{n(m_k)}(t) \to H^x (t), \ \text{ and }
	\widehat H^x_{1n(m_k)}(t) \to H^x_1(t).
	\end{align*}
	%for almost all $x \in \cX$ with respect to $P_X.$
	If a sequence of subdistribution functions converges pointwise to a limit that is a continuous function, it also converges uniformly to the limit function (see, e.g.  \citet{vaart2000asymptotic}).
	The uniform convergence follows from the continuity assumption on $H^x$ and $H^x_1$ and the fact that a sequence of subdistribution functions converging pointwise to a continuous subdistribution function also converges uniformly. Therefore, for every subsequence of $\sup_{t \in [0, \tau]} |\widehat H^X_n(t)- H^X(t)|$ or $\sup_{t \in [0, \tau]} |\widehat H^X_{1n}(t)- H^X_1(t) |$, there is a  further subsequence  that converges almost surely to $0$, for which we show  $	\sup_{t \in [0, \tau]} |\widehat H^X_n(t)- H^X(t)| \to_P  0$  and   $\sup_{t \in [0, \tau]} |\widehat H^X_{1n}(t)- H^X_1(t)| \to_P  0.$
\end{proof}

\subsection{Proof of Theorem \ref{thm2}}

Before the proof of Theorem \ref{thm2}, we introduce another result. Let   $\mathcal{S}^{p}\left(z_{0}, \ldots, z_{N+1}\right)$ denote the set of all continuous piecewise linear functions $f: \mathbb{R} \rightarrow \mathbb{R}^{p}$, which have breakpoints only at $z_{0}<z_{1}<\cdots<z_{N}<z_{N+1}$ and are constant on $\left(-\infty, z_{0}\right)$ and $\left(z_{N+1}, \infty\right)$.  The following result is given in \citet[Lemma 3.1]{WangYang2022}.
\begin{lemma}\label{lem:piecewise}
	Suppose that $W \geq 7 p+1, L \geq 2$ and $N \leq(W-p-1)\left\lfloor\frac{W-p-1}{6 p}\right\rfloor\left\lfloor\frac{L}{2}\right\rfloor$. Then for any $z_{0}<z_{1}<\cdots<z_{N}<z_{N+1}$, we have $\mathcal{S}^{p}\left(z_{0}, \ldots, z_{N+1}\right) $ can be represented by a ReLU FNN with width and depth no larger than $W$ and $L$, respectively.
\end{lemma}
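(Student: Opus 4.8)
The plan is to give an explicit, layer-by-layer construction of the ReLU network, using a ``carry-and-accumulate'' design: in every hidden layer, $p$ channels transport a running partial sum of the output, one channel transports the input $x$ unchanged, and the remaining $\le W-p-1$ channels of each consecutive pair of layers are devoted to absorbing one fresh batch of breakpoints. The starting point is the elementary closed form
\[
f(x)=f(z_{0})+\sum_{j=0}^{N+1}v_{j}\,\sigma(x-z_{j}),\qquad v_{j}\in\mathbb{R}^{p},\ \ \sum_{j=0}^{N+1}v_{j}=0,
\]
valid for every $f\in\mathcal{S}^{p}(z_{0},\dots,z_{N+1})$, where $v_{0}$ is the slope of $f$ on $(z_{0},z_{1})$ and $v_{j}$ is the slope change of $f$ at $z_{j}$; the constraint $\sum_{j}v_{j}=0$ is precisely the requirement that $f$ be constant on $(z_{N+1},\infty)$, while constancy on $(-\infty,z_{0})$ is automatic. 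So $f$ is a fixed affine read-out of the $N+2$ scalar ReLU features $\sigma(x-z_{j})$, and it suffices to reproduce these features, or convenient linear combinations of them, within the width and depth budget.

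First I would partition $\{0,1,\dots,N+1\}$ into consecutive blocks $B_{1},\dots,B_{K}$, each of size at most $m:=(W-p-1)\lfloor (W-p-1)/(6p)\rfloor$ with $K\le\lfloor L/2\rfloor$; the hypothesis $N\le m\lfloor L/2\rfloor$ makes this possible. Then $f=f(z_{0})+\sum_{k=1}^{K}g_{k}$, where $g_{k}(x)=\sum_{j\in B_{k}}v_{j}\sigma(x-z_{j})$ is again an $\mathbb{R}^{p}$-valued continuous piecewise-linear function, now with at most $m$ breakpoints, all lying in $B_{k}$. Each $g_{k}$ will be realised by one dedicated pair of hidden layers, and the outputs are telescoped along the accumulator channels.

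The technical core is the two-layer building block: a continuous piecewise-linear map $\mathbb{R}\to\mathbb{R}^{p}$ with at most $m=M\lfloor M/(6p)\rfloor$ breakpoints, where $M:=W-p-1$, is computable by a ReLU network with two hidden layers of width at most $M$. This is the standard divide-and-conquer (``bit extraction'') construction: group the $\le m$ breakpoints into $\le\lfloor M/(6p)\rfloor$ coarse bins of $\le M$ breakpoints each; the first layer uses $\le 6p$ units per coarse bin --- the factor $6p$ accommodating the positive/negative splitting forced by sign-indefinite $\mathbb{R}^{p}$-valued intermediate quantities --- to extract which coarse bin contains $x$ together with the local offset of $x$ in that bin, and the second layer applies to that local coordinate the $\le M$-breakpoint affine interpolation determined by the fine breakpoints, outputting $g_{k}(x)$. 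Stacking the $K$ blocks, interleaved with the $p$ accumulator channels (carrying $f(z_{0})+\sum_{k'\le k}g_{k'}(x)$) and the one input channel, yields a network of depth $2K\le L$ and width $\le M+p+1=W$, with the output layer reading off the final accumulated sum; the identity $\sum_{j}v_{j}=0$ keeps each $g_{k}$, hence $f$, constant on both infinite tails.

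The step I expect to be the main obstacle is exactly this two-layer building block together with the exact width accounting. Showing that two hidden layers of width $M$ genuinely suffice for $\Theta(M^{2}/p)$ breakpoints requires the careful collocation construction (coarse localisation in the first layer, fine affine interpolation in the second), and verifying that the per-coarse-bin cost is at most $6p$ while the carry channels cost exactly $p+1$ is what pins down $W\ge 7p+1$ as the threshold and $(W-p-1)\lfloor (W-p-1)/(6p)\rfloor$ as the per-double-layer capacity. The remaining ingredients --- the closed form for $f$, the block decomposition, the serial assembly of the blocks along the accumulator, and the preservation of the constant tails --- are routine bookkeeping once this block is available; this is the content of Lemma 3.1 of \citet{WangYang2022}, whose construction I would follow.
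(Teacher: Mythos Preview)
The paper does not prove this lemma at all: it is simply quoted verbatim as \citet[Lemma 3.1]{WangYang2022} and used as a black box in the proof of Theorem \ref{thm2}. Your proposal is therefore aligned with the paper's treatment --- you too ultimately defer to that reference --- and you go further by sketching the carry-and-accumulate construction behind it; there is nothing to compare beyond noting that the paper gives no argument whatsoever.
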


\begin{proof}[\textit{Proof of Theorem \ref{thm2}}]
	First, we consider the case in  which  $F^x(t)$  and $Q^x(t)$ are continuous functions of $t$ for every $x\in \cX.$ The consistency statement
	(\ref{HazardConverge}) in Theorem \ref{thm2} follows from the extended continuous mapping theorem
	(Theorem 18.10, \citet{vaart2000asymptotic}) and
	the result that the map
	$
	(H^x, H^x_{1}) \mapsto \Lambda^x
	$
	defined in (\ref{cH2}) is a continuous map from $D[0, \tau] \times D[0, \tau] \to D[0, \tau]$ (actually Hadamard differentiable, see, e.g., \citet{gj1990} or
	Lemma 20.14 in \citet{vaart2000asymptotic}).  Here $D[0, \tau]$ is the space of subdistribution functions on $[0, \tau]$ endowed with the supermum norm.
	
	Similarly, the consistency statement (\ref{SurvivalConverge}) in Theorem \ref{thm2}
follows from the extended continuous mapping theorem and the result that the product integral is a continuous map from $D[0, \tau] \to D[0, \tau]$
	(see, e.g., \citet{gj1990} or
	Lemma 20.14 in \citet{vaart2000asymptotic}).
	
	Then, we extend the result to   general situation in which   $F^x(t)$  and $Q^x(t)$ are arbitrary.     For $x\in\cX$, let $t_{1}^x,t_{2}^x,\dots$ be the set of all points of $t$ where either $F^{x}(t)$ or $Q^{x}(t)$ or both have a jump. Let  $h_{x}(y)=y+\sum_{i:t_{i}^x<y}\frac{1}{i^2}$. We define the function $\lambda$ mapping from $\cX\times \cY\times\{0,1\}\times [0,1]$ to $\cX\times \cY\times\{0,1\}$ as
	\[
	\lambda(x,y,\delta,z)=\begin{cases}
	(x,h_{x}(y),\delta),\quad&  y\ne t_{1}^x,t_{2}^x,\dots,
	\\(x,h_{x}(y)+\frac{1}{2s^2}z,\delta),\quad&  y= t_{s}^x,\delta=1,
	\\(x,h_{x}(y)+\frac{1}{2s^2}(1+z),\delta),\quad&  y= t_{s}^x,\delta=0.
	\end{cases}
	\]
	We also write $\lambda(x,y,\delta,z)=(x, g_x(y,\delta,z),\delta)$.
	Conditional on $X=x$ and generating $\varepsilon$ independently from the uniform distribution on $[0,1]$,   \cite{Major1988}
	showed that $(g_x(Y,\Delta,\varepsilon), \Delta)$ has the same distribution of $(\tilde{Y}^x=\min\{\tilde{T}^x, \tilde{C}^x\},\tilde{\Delta}^x=1(\tilde{T}^x\leq \tilde{C}^x))$ in which $\tilde{T}^x$ and $\tilde{C}^x$ have  conditional distributions $\tilde{ F}^{x}(y)$ and  $ \tilde{Q}^{x}(y)$ that are continuous functions of $y$.   Let $\varepsilon_1,\dots, \varepsilon_{n}$  be a sequence of i.i.d. random variables with the uniform distribution on $[0,1]$ and are independent of the data. We construct a new set  of samples $(\tilde{X}_i, \tilde{Y}_i, \tilde{\Delta}_i) =\lambda(X_i,Y_i,\Delta_i,\varepsilon_i), i=1,\dots, n$. Based on the new samples, we aim to derive a new generator estimator $\bar{G}_n=(\bar{G}_{1n}, \bar{G}_{2n})$.
	%with $\tilde{G}_{in}=G_{\tilde{\theta}_{in}}, i=1,2 $  by solving the minimax problem:
	%\begin{equation}
	%\begin{split}\label{eq:minimax}
	%(\tilde{\theta}_{1n},\tilde{\theta}_{2n}, \tilde{\phi}_n)
	%&=\argmin_{\tilde{\theta}_1,\tilde{\theta}_2}\argmax_{\tilde{\phi}}L_n(G_{\tilde{\theta}_1}, G_{\tilde{\theta}_2}, D_{\tilde{\phi}})
	%\\&=\argmin_{\tilde{\theta}_1,\tilde{\theta}_2}\argmax_{\tilde{\phi}}\big\{  \frac{1}{n}\sum_{i=1}^n D_{\tilde{\phi}}(\tilde{X}_i, G_{\tilde{\theta}_1}(\eta_i,\tilde{X}_i),G_{\tilde{\theta}_2}(\eta_i,\tilde{X}_i))
	%	-\frac{1}{n}\sum_{i=1}^n D_{\tilde{\phi}}(\tilde{X}_i, \tilde{Y}_i, \tilde{\Delta}_i)\big\}.
	%\end{split}
	%\end{equation}
	The generalized inverse function of  $h_{x}(y)$ is defined as  $h_{x}^{-1}(y)=\inf\{w\in\real: h_{x}(w)\geq y\}$. For every $x$, $h_{x}^{-1}(y)$ is a piecewise linear function. Let $\lambda^{-1}(x,y,\delta)=(x,h_{x}^{-1}(y),\delta)$.  Then we have  $\lambda^{-1}(\lambda(x,y,\delta,z))=(x,y,\delta)$. By the analysis of  \cite{lzjh2021}, with probability tending to 1,   we have   $\sup_{\phi}L_n(\tG_n, D_{\phi})=0$. Under this situation,   we have
	\begin{equation}
	\begin{split}\label{eq:aimeq}
	0&=\sup_{\phi}\big\{ \frac{1}{n}\sum_{i=1}^n D_{\phi}(X_i, \tG_{n}(\eta_i,X_i))
	-\frac{1}{n}\sum_{i=1}^n D_{\phi}(X_i, Y_i, \Delta_i)\big\}
	\\&= \sup_{\phi}\big\{ \frac{1}{n}\sum_{i=1}^n  D_{\phi}(X_i, \tG_{n}(\eta_i,X_i))
	-\frac{1}{n}\sum_{i=1}^n D_{\phi}(\lambda^{-1}\circ\lambda(X_i, Y_i,\Delta_i, \varepsilon_i))\big\}
	\\&= \sup_{\phi}\big\{ \frac{1}{n}\sum_{i=1}^n  D_{\phi}(X_i, G_{\hat{\vtheta}_{1n}}(\eta_i,X_i), G_{\hat{\vtheta}_{2n}}(\eta_i,X_i))
	-\frac{1}{n}\sum_{i=1}^n D_{\phi}(\tilde{X}_i, h_{X_i}^{-1} (\tilde{Y}_i),\tilde{\Delta}_i)\big\}.
	%\\&\geq   \sup_{\tilde{\phi}}\big\{
	%\frac{1}{n}\sum_{i=1}^n D_{\tilde{\phi}}(\lambda(X_i, \hat{G}_{n}(\eta_i,X_i),\varepsilon_i))
	%-\frac{1}{n}\sum_{i=1}^n D_{\tilde{\phi}}(\tilde{X}_i, \tilde{Y}_i, \tilde{\Delta}_i)\big\},
	\end{split}
	\end{equation}
	Consider a continuous piecewise linear function $\tilde{g}$ defined by
	$\tilde{g}( G_{\hat{\vtheta}_{1n}}(\eta_i,X_i))=h_{X_i}(G_{\hat{\vtheta}_{1n}}(\eta_i,X_i))$ and $\tilde{g}(h_{X_i}^{-1} (\tilde{Y}_i))=\tilde{Y}_i$ when the two inputs are different,  and $\tilde{g}(h_{X_i}^{-1} (\tilde{Y}_i))=\tilde{Y}_i$ when the two inputs are equal.
	%
	%
	%Then, if only  $y=h_{X_i}^{-1}\circ g_{X_i}(\hat{G}_{n}(\eta_i,X_i),\varepsilon_i)$  and $h_{X_i}^{-1} (\tilde{Y}_i)$ are considered, $\tilde{g}(y)=h_{X_i}(y)$ when $y$ is a  continuous point of $h_{X_i}$ and $\tilde{g}(y)\geq h_{X_i}(y)$ when $y$ is a jump point of $h_{X_i}$. And for  these jump points that has $\tilde{g}(y)\ne h_{X_i}(y)$,
	As a result of Lemma \ref{lem:piecewise},  $\tilde{g}$ can be denoted by a ReLU FNN, which together with $t=\sigma(t)-\sigma(-t)$ shows that there is a ReLU FNN $\tilde{f}$ such that $\tilde{f}(x,y,z)=(x,\tilde{g},z)$.
	When the  discriminator class is large enough to represent $\tilde{f}$ and has $\{D_{\tilde{\phi}}\circ\tilde{f}\}\subseteq \{D_{\phi}\}$,  it follows by \eqref{eq:aimeq} and    $X_i=\tilde{X}_i$ that
	\begin{equation}
	\begin{split}\label{eq:relate}
	0&=\sup_{\phi}\big\{ \frac{1}{n}\sum_{i=1}^n D_{\phi}(X_i, \tG_{n}(\eta_i,X_i))
	-\frac{1}{n}\sum_{i=1}^n D_{\phi}(X_i, Y_i, \Delta_i)\big\}
	%\\&\geq   \sup_{\tilde{\phi}}\big\{
	%\frac{1}{n}\sum_{i=1}^n D_{\tilde{\phi}}(\lambda(X_i, \hat{G}_{n}(\eta_i,X_i),\varepsilon_i))
	%-\frac{1}{n}\sum_{i=1}^n D_{\tilde{\phi}}(\tilde{X}_i, \tilde{Y}_i, \tilde{\Delta}_i)\big\},
	\\&\geq \sup_{\tilde{\phi}}\big\{ \frac{1}{n}\sum_{i=1}^n D_{\tilde{\phi}}(
	\tilde{X}_i, \tilde{g}\circ  G_{\hat{\vtheta}_{1n}}(\eta_i,\tilde{X}_i),G_{\hat{\vtheta}_{2n}}(\eta_i,\tilde{X}_i))
	-\frac{1}{n}\sum_{i=1}^n D_{\tilde{\phi}}(\tilde{X}_i,  \tilde{Y}_i,\tilde{\Delta}_i)\big\}.
	\end{split}
	\end{equation}
	For each  $i$, we modify the function $h_{X_i}(y)$ to $\tilde{h}_{X_i}(y)$ such that for points $y= h_{X_i}^{-1} (\tilde{Y}_i)$  we have $\tilde{h}_{X_i}(y)=\tilde{g}(y)$. Then, the modification is only at jump point of $h_{X_i}(y)$, and we have
	$ h_{X_i}(y)\leq \tilde{h}_{X_i}(y)$. Hence by \eqref{eq:relate}, we have
	\begin{equation*}
	\begin{split}
	0&=\sup_{\phi}\big\{ \frac{1}{n}\sum_{i=1}^n D_{\phi}(X_i, \tG_{n}(\eta_i,X_i))
	-\frac{1}{n}\sum_{i=1}^n D_{\phi}(X_i, Y_i, \Delta_i)\big\}
	%\\&\geq   \sup_{\tilde{\phi}}\big\{
	%\frac{1}{n}\sum_{i=1}^n D_{\tilde{\phi}}(\lambda(X_i, \hat{G}_{n}(\eta_i,X_i),\varepsilon_i))
	%-\frac{1}{n}\sum_{i=1}^n D_{\tilde{\phi}}(\tilde{X}_i, \tilde{Y}_i, \tilde{\Delta}_i)\big\},
	\\&\geq \sup_{\tilde{\phi}}\big\{ \frac{1}{n}\sum_{i=1}^n D_{\tilde{\phi}}(
	\tilde{X}_i, \tilde{h}_{\tilde{X}_i}\circ G_{\hat{\vtheta}_{1n}}(\eta_i,\tilde{X}_i),G_{\hat{\vtheta}_{2n}}(\eta_i,\tilde{X}_i))
	-\frac{1}{n}\sum_{i=1}^n D_{\tilde{\phi}}(\tilde{X}_i,  \tilde{Y}_i,\tilde{\Delta}_i)\big\},
	\end{split}
	\end{equation*}
	which  shows that when $\bar{G}_n(\eta, X)=(\tilde{h}_{X}\circ  G_{\hat{\vtheta}_{1n}}(\eta,X),G_{\hat{\vtheta}_{2n}}(\eta,X))$  we have $\sup_{\tilde{\phi}}L_n(\bar{G}_n, D_{\tilde{\phi}})=0$ with probability tending to 1. By a similar   analysis  of \cite{lzjh2021}, the result of Lemma \ref{lem1a} holds for $\bar{G}_n$, for which   Theorem \ref{lemm2a} follows.
	Denote  by $\bar{G}_n^x=(\bar{G}_{1n}^x, \bar{G}_{2n}^x)$ the  conditional generator when  given $X=x$.   Let $\tilde{H}_n^x(y)=P(\bar{G}_{1n}^x\leq y)$ and  $\tilde{H}_{1n}^x(y)=P(\bar{G}_{1n}^x\leq y, g(\bar{G}_{2n}^x)=1)$, where $g(\delta)=1_{\{\delta\geq0.5\}}$. We have
	\begin{equation*}
	\begin{split}
	\tilde{H}_n^x(h_x(y))=P(\bar{G}_{1n}^x\leq h_x(y))&=P(\tilde{h}_{x}\circ  G_{\hat{\vtheta}_{1n}}(\eta,x) \leq h_x(y)).
	\end{split}
	\end{equation*}
	When $y$ is a continuous point of $h_x$, we have $\tilde{H}_n^x(h_x(y)) =P(G_{\hat{\vtheta}_{1n}}(\eta,x)\leq y)=\hat{H}_n^x(y).$
	When $y$ is a jump point of $h_x$, we have  $\tilde{H}_{n}^x(h_{x+}(y))=P(G_{\hat{\vtheta}_{1n}}(\eta,x)\leq  y)=\hat{H}_{n}^x(y)$.
	Similarly,  when $y$ is a continuous point of $h_x$, we have $\tilde{H}_{1n}^x(h_x(y)) = \hat{H}_{1n}^x(y).$
	When $y$ is a jump point of $h_x$, we have   $\tilde{H}_{1n}^x(h_{x+}(y)) =\hat{H}_{1n}^x(y).$ In addition,    when  $y$ is a continuous point of $h_{x}$, we have  $\tilde{H}^x(h_{x}(y))=H^x(y)$  and $\tilde{H}_{1}^x(h_{x}(y))=H_{1}^x(y)$, and when  $y$ is a jump point of $h_{x}$, we have   $\tilde{H}^x(h_{x+}(y))=H^x(y)$  and $\tilde{H}_{1}^x(h_{x+}(y))=H_{1}^x(y)$.
	As the continuity condition is satisfied by the new samples and the result of Theorem \ref{lemm2a} holds for the new generator estimator $\tilde{G}_n$,    the results of Lemma \ref{lem2a} hold for  $|\tilde{H}_n^X(y)-\tilde{H}^X(y)|$ and $|\tilde{H}_{1n}^X(y)-\tilde{H}_{1}^X(y)|$.
	It then follows that  the results of Lemma \ref{lem2a}   hold for $|\hat{H}_n^X(y)-H^X(y)|$ and $|\hat{H}_{1n}^X(y)-H_{1}^X(y)|$. Therefore,  the results of $\eqref{HazardConverge}$
	and $\eqref{SurvivalConverge}$ in Theorem \ref{thm2} follow  based on the above analysis of the continuous case.
\end{proof}

\subsection{Proof of Lemma \ref{lem:KM}  and Theorem \ref{thm_together}}
The proof of  Lemma \ref{lem:KM} uses the  method of \cite{Foldes1981} and \cite{Major1988}
with some modifications.
%The proof of  Lemma \ref{lem:KM} is based  on the  method of \cite{Foldes1981} and %\cite{Major1988}. For completeness, we give the details below.

\begin{proof}[Proof of Lemma \ref{lem:KM}]	
	{\color{black}	Note that when the estimated generator $\tG_n(\eta, X)$ and $X=x$ are given, the randomness is only   from $\eta$.  For the simplicity of notation,  we omit $\eta$ in $P_{\eta}$ and $\Ebb_{\eta}$ in this proof when it does not cause any confusion.}
	First, we consider the case in which    $\hat{H}_{1n}^x(t)$ and $\hat{H}_n^x(t)$  are continuous functions of $t$.
	Define $A_{nm}=\{\max_{1\leq j \leq m}\hat{Y}_{nj}^x>\tau_n^x\}$. Then,
	\[
	P(\bar{A}_{nm})=\hat{H}_{n}^x(\tau_n^x)^m\leq(1-\nu)^m\leq \exp\{-m\nu\},
	\]
	and   $P(A_{nm})\geq 1-\exp\{-m\nu\}>1/2$ when $m\nu>1.$
	For $\epsilon>0$, we choose a partition $0=\eta_0<\eta_1<\cdots<\eta_{Ln^x(\varepsilon)}=\tau_n^x$ such that $ \hat{S}_{n}^x(\eta_{i-1})- \hat{S}_{n}^x(\eta_{i})\leq \varepsilon/2$ for $i=1,\dots, L_{n}^x(\varepsilon)$ and $ L_{n}^x(\varepsilon)\leq 6/\varepsilon.$ Therefore, for any $t\in[0,\tau_n^x]$, there exists $i$ such that $\eta_{i-1}\leq t< \eta_{i}$. Then,
	\begin{align*}
	&\hat{S}_{nm}^x(t)-\hat{S}_n^x(t)\leq \hat{S}_{nm}^x(\eta_{i-1})-\hat{S}_n^x(\eta_{i})
	\leq \hat{S}_{nm}^x(\eta_{i-1})-\hat{S}_n^x(\eta_{i-1})+\varepsilon/2,
	\\&\hat{S}_{nm}^x(t)-\hat{S}_n^x(t)\geq \hat{S}_{nm}^x(\eta_{i})-\hat{S}_n^x(\eta_{i-1})\geq  \hat{S}_{nm}^x(\eta_{i})-\hat{S}_n^x(\eta_{i})-\varepsilon/2,
	\end{align*}
	which  leads to
	\[
	\sup_{t\in[0,\tau^x_n]}|\hat{S}_{nm}^x(t)-\hat{S}_n^x(t)|\leq \max_{i=1,\dots, L_{n}^x(\varepsilon)}|\hat{S}_{nm}^x(\eta_{i})-\hat{S}_n^x(\eta_{i})|+\varepsilon/2.
	\]
	Consequently, we have
	\begin{equation}\label{eq:maineq1}
	\begin{split}
	&P(\sup_{t\in[0,\tau^x_n]}|\hat{S}_{nm}^x(t)-\hat{S}_n^x(t)|>\varepsilon)
	\\&\leq P(\sup_{t\in[0,\tau^x_n]}|\hat{S}_{nm}^x(t)-\hat{S}_n^x(t)|>\varepsilon\mid   A_{nm})+P(\bar{A}_{nm})
	\\&\leq P(\max_{i=1,\dots, L_{n}^x(\varepsilon)}|\hat{S}_{nm}^x(\eta_{i})-\hat{S}_n^x(\eta_{i})|>\varepsilon/2\mid  A_{nm})+\exp\{-m\nu\}
	\\&\leq \sum_{i=1}^{L_{n}^x(\varepsilon)}P( |\hat{S}_{nm}^x(\eta_{i})-\hat{S}_n^x(\eta_{i})|>\varepsilon/2\mid   A_{nm})+\exp\{-m\nu\}.
	\end{split}
	\end{equation}
	For any $t\in[0,\tau^x_n]$, we have
	\begin{equation}\label{eq:maineq}
	\begin{split}
	\log \hat{S}_{nm}^x(t)-\log\hat{S}_n^x(t)&=\sum_{j=1}^{m}1(\hat{Y}^x_{(nj)}\leq t, \hat{\delta}^x_{(nj)}=1)\log (\frac{m-j}{1+m-j})-\log\hat{S}_n^x(t)
	\\&=-\frac{1}{m}\sum_{j=1}^{m}\frac{1(\hat{Y}^x_{(nj)}\leq t, \hat{\delta}^x_{(nj)}=1)}{1-\hat{H}_{n}^x(\hat{Y}^x_{(nj)})} -\log\hat{S}_n^x(t)
	\\&\quad +\sum_{j=1}^{m}1(\hat{Y}^x_{(nj)}\leq t, \hat{\delta}^x_{(nj)}=1)\big\{\log (\frac{m-j}{1+m-j})+\frac{1}{1+m-j}\big\}
	\\&\quad +\frac{1}{m}\sum_{j=1}^{m}1(\hat{Y}^x_{(nj)}\leq t, \hat{\delta}^x_{(nj)}=1)\big\{\frac{1}{1-\hat{H}_{n}^x(\hat{Y}^x_{(nj)})}-\frac{m}{1+m-j}\big\}
	\\&:=R_{nm,1}+R_{nm,2}+R_{nm,3}.
	\end{split}
	\end{equation}
	For $R_{nm,1}$, we have $\sup_{t\in [0,\tau^x_n]}|\frac{1(\hat{Y}^x_{nj}\leq t, \hat{\delta}^x_{nj}=1)}{1-\hat{H}_{n}^x(\hat{Y}^x_{nj})}|\leq \frac{1}{1-\hat{H}_{n}^x(\tau^x_n)}\leq\nu^{-1}$ and
	\[
	\Ebb\Big\{ \frac{1(\hat{Y}^x_{nj}\leq t, \hat{\delta}^x_{nj}=1)}{1-\hat{H}_{n}^x(\hat{Y}^x_{nj})} \Big\}=-\log\hat{S}_n^x(t).
	\]
	It follows by  Hoeffding's lemma that  for any positive constant $\lambda$  we have
	\begin{align*}
	& P( R_{nm,1} >\varepsilon )
	\\\;&\leq \exp\{-\lambda \varepsilon\}\prod_{j=1}^{m}\Ebb\Big[\exp\Big\{-\frac{\lambda}{m}\big\{\frac{1(\hat{Y}^x_{nj}\leq t, \hat{\delta}^x_{nj}=1)}{1-\hat{H}_{n}^x(\hat{Y}^x_{nj})}+\log \hat{S}_n^x(t)\big\}\Big\} \Big]
	\\\;&\leq \exp\{-\lambda \varepsilon\}\prod_{j=1}^{m}\exp\{\frac{2\lambda^2}{m^2\nu^2}\}
	\leq \exp\{- \frac{m\varepsilon^2\nu^2}{8}\},
	\end{align*}
	where the last inequality follows by selecting $\lambda= \varepsilon m \nu^2/4.$   Therefore, we obtain
	\begin{align}\label{eq:Rnm1}
	P( |R_{nm,1}| >\varepsilon  )
	\leq 2\exp\{- \frac{m\varepsilon^2\nu^2}{8}\}.
	\end{align}
	On the set $A_{nm}$, we have
	\begin{align*}
	|R_{nm,2}|&\leq  \sum_{j=1}^{m}1(\hat{Y}^x_{(nj)}\leq \tau_n^x, \hat{\delta}^x_{(nj)}=1) \frac{2}{(1+m-j)^2}
	\\&\leq \sum_{j=1}^{m}1(\hat{Y}^x_{nj}\leq \tau_n^x, \hat{\delta}^x_{nj}=1) \frac{2}{\{1+\sum_{i=1}^{m}1(\hat{Y}^x_{ni}>\hat{Y}^x_{nj})\}^2}
	\\&\leq \frac{2m}{\{1+\sum_{i=1}^{m}1(\hat{Y}^x_{ni}>\tau_n^x)\}^2},
	\end{align*}
	where the first inequality is based on $|\log(1-x)+x|\leq 2x^2$ when $x\in [0,1/2]$.
	Let $B_{nm}=\big\{\big|\sum_{i=1}^{m}1(\hat{Y}^x_{ni}>\tau_n^x)-m(1-\hat{H}_{n}^x(\tau_n^x))\big|\geq m(1-\hat{H}_{n}^x(\tau_n^x)) \hat{H}_{n}^x(\tau_n^x)\big\}$. Based on Bernstein's inequality,
	\begin{align*}
	P\big(B_{nm} \big)&\leq 2\exp\big\{-m(1-\hat{H}_{n}^x(\tau_n^x)) \hat{H}_{n}^x(\tau_n^x)^2/(2+\hat{H}_{n}^x(\tau_n^x))\big\}
	\\&\leq 2\exp\big\{-m\nu  /10\big\}.
	\end{align*}
	Under condition $\varepsilon>2/m\nu^4$,   $ P (\bar{B}_{nm}   )>1/2$. On the complement of  $B_{nm}$, we have
	\[
	\frac{2m}{\{1+\sum_{i=1}^{m}1(\hat{Y}^x_{ni}>\tau_n^x)\}^2}\leq \frac{2m}{(1+m\nu^2)^2}<\varepsilon.
	\]
	Hence,
	\begin{equation}\label{eq:Rnm2}
	\begin{split}
	P\big(|R_{nm,2}|>\varepsilon \big| A_{nm}\big)&	\leq 2P\Big\{\frac{2m}{\{1+\sum_{i=1}^{m}1(\hat{Y}^x_{ni}>\tau_n^x)\}^2}>\varepsilon  \Big\}
	\\&\leq  2P\big(B_{nm}  \big)
	\\& \leq 4\exp\big\{-m\nu  /10\big\}.
	\end{split}
	\end{equation}
	For $R_{nm,3}$, we have
	\[
	|R_{nm,3}|\leq\frac{m}{(1-\hat{H}_n^x(\tau_n^x))(1+\sum_{i=1}^{m}1(\hat{Y}^x_{ ni }>\tau_n^x ))}\sup_{t\in[0,\tau^x_n]}\Big|\frac{1}{m}+\frac{1}{m}\sum_{i=1}^{m}1(\hat{Y}^x_{ ni }>t) +\hat{H}_n^x(t)-1\Big|.
	\]
	On the complement of  $B_{nm}$,
	\[
	\frac{m}{(1-\hat{H}_n^x(\tau_n^x))(1+\sum_{i=1}^{m}1(\hat{Y}^x_{ ni }>\tau_n^x ))}<\frac{1}{\nu^3}.
	\]
	By Corollary 1 of \cite{Massart1990},
	\[
	P\big(\sup_{t\in[0,\tau^x_n]}\big|\frac{1}{m}\sum_{i=1}^{m}1(\hat{Y}^x_{ ni }>t) +\hat{H}_n^x(t)-1\big|>\frac{\varepsilon\nu^3}{2} \big)\leq 2\exp\{-m\varepsilon^2\nu^6/2\},
	\]
	which together with condition $\varepsilon>4/m\nu^3$ leads to
	\begin{equation}\label{eq:Rnm3}
	\begin{split}
	&P\big(|R_{nm,3}|>\varepsilon \big|  A_{nm}\big)
	\\&\;\leq 4\exp\big\{-m\nu  /10\big\}+ 2P\Big\{  \frac{1}{m} >\frac{\varepsilon\nu^3}{2}  \big|   \bar{B}_{nm}\Big\}
	\\&\quad+2P\Big\{\sup_{t\in[0,\tau^x_n]}\Big|\frac{1}{m}\sum_{i=1}^{m}1(\hat{Y}^x_{ ni }>t) +\hat{H}_n^x(t)-1\Big|>\frac{\varepsilon\nu^3}{2} \big|  \bar{B}_{nm}\Big\}
	\\&\; \leq 4\exp\big\{-m\nu  /10\big\}+  8\exp\{-m\varepsilon^2\nu^6/2\}
	\\&\; \leq 12\exp\{-m\varepsilon^2\nu^6/2\}.
	\end{split}
	\end{equation}
	Consequently,    under condition $\varepsilon>6/\nu^4m$, it follows by \eqref{eq:maineq}, \eqref{eq:Rnm1}, \eqref{eq:Rnm2}, and \eqref{eq:Rnm3} that for any $t\in[0,\tau^x_n]$  we have
	\begin{equation}
	\begin{split}
	&P\big(|\log \hat{S}_{nm}^x(t)-\log\hat{S}_n^x(t)|>\varepsilon\mid   A_{nm} )\\&\leq P\big(| R_{nm,1}|>\frac{\varepsilon}{3}\mid   A_{nm} )
	+P\big(| R_{nm,2}|>\frac{\varepsilon}{3}\mid  A_{nm} )+P\big(| R_{nm,3}|>\frac{\varepsilon}{3}\mid  A_{nm} )
	\\&\leq 4\exp\{-  m\varepsilon^2\nu^2/72\}+4\exp\big\{-m\nu  /10\big\}+12\exp\{-m\varepsilon^2\nu^6/18\}
	\\&\leq 20\exp\{-m\varepsilon^2\nu^6/18\}.
	\end{split}
	\end{equation}
	From $|\hat{S}_{nm}^x(t)-\hat{S}_n^x(t)|\leq|\log \hat{S}_{nm}^x(t)-\log\hat{S}_n^x(t)|$, we obtain
	\begin{equation*}
	\begin{split}
	P\big(|\hat{S}_{nm}^x(t)-\hat{S}_n^x(t)|>\varepsilon\mid   A_{nm} ) \leq 20\exp\{-m\varepsilon^2\nu^6/18\},
	\end{split}
	\end{equation*}
	based on which and \eqref{eq:maineq1} we show
	\begin{equation}\label{eq:reslut1}
	\begin{split}
	P(\sup_{t\in[0,\tau^x_n]}|\hat{S}_{nm}^x(t)-\hat{S}_n^x(t)|>\varepsilon )		
	\leq \frac{c_1}{\varepsilon} \exp\{-c_2m\varepsilon^2\nu^6\},
	\end{split}
	\end{equation}
	where $c_1$ and $c_2$ are constants. Choose $\varepsilon=\sqrt{\frac{2\log m}{c_2m\nu^6}}$.  By \eqref{eq:reslut1}, we obtain
	\begin{equation*}
	\begin{split}
	P\Big(\sup_{t\in[0,\tau^x_n]}|\hat{S}_{nm}^x(t)-\hat{S}_n^x(t)|>\sqrt{\frac{2\log m}{c_2m\nu^6}} \Big)		
	\leq  c_1\sqrt{\frac{c_2\nu^6}{2\log m}} \frac{1}{m^{3/2}},
	\end{split}
	\end{equation*}
	and
	\begin{equation*}
	\begin{split}
	\sum_{m=1}^{\infty}	P\Big(\sup_{t\in[0,\tau^x_n]}|\hat{S}_{nm}^x(t)-\hat{S}_n^x(t)|>\sqrt{\frac{2\log m}{c_2m\nu^6}} \Big)		
	\leq  \sqrt{\frac{c_1^2c_2\nu^6}{2}}	\sum_{m=1}^{\infty}  \sqrt{\frac{1}{ \log m}} \frac{1}{m^{3/2}}<\infty.
	\end{split}
	\end{equation*}
	From Borel-Cantelli lemma \citep[Theorem 2.3.1]{Durrett}, we have
	\[
	P\Big(\sup_{t\in[0,\tau^x_n]}|\hat{S}_{nm}^x(t)-\hat{S}_n^x(t)|>\sqrt{\frac{2\log m}{c_2m\nu^6}} \quad i.o.\Big)=0,
	\]
	where i.o. stands for infinitely often. Therefore, we show
	\begin{equation}\label{eq:resultas}
	P\Big(\sup_{t\in[0,\tau^x_n]}|\hat{S}_{nm}^x(t)-\hat{S}_n^x(t)|=O(\sqrt{\frac{ \log m}{ m }})\Big)=1.
	\end{equation}
	
	Then, we extend the result to general case   based on the method of \cite{Major1988}.   In this case,  the estimator  $\hat{S}^x_{nm}(t)$ is defined as the  general form of Kaplan-Meier product limit estimator  \citep[page 1117]{Major1988}. Let $\hat{t}^x_{n1},\hat{t}^x_{n2},\dots$ be the set of all points where either $\hat{H}_{1n}^x(t)$ or $\hat{H}_n^x(t)$ or both have a jump. Define
	\[
	\hat{h}_n^x(u)=u+\sum_{i:\hat{t}^x_{ni}<u}\frac{1}{i^2}, \quad u\in \real.
	\]
	Let $1-\tilde{S}_n^x(t)$ be defined in a  similar way as that  of $\hat{F}(t)$ in \citet[page 1118]{Major1988}.
	Let $\varepsilon_1,\dots, \varepsilon_{m}$  be a sequence of i.i.d. random variables from the uniform distribution on $[0,1]$ and are independent of the data $(\hat{Y}^x_{nj}, \hat{\delta}^x_{nj})$, $j=1,\dots,m$. We consider a new set  of samples $(\tilde{Y}^x_{nj}, \hat{\delta}^x_{nj})$, where for each $j$,
	\[
	\tilde{Y}^x_{nj}=\begin{cases}
	\hat{h}_n^x( \hat{Y}^x_{nj}),  & \text{if } \hat{Y}^x_{nj}\ne \hat{t}^x_{ni}, i=1,2,\dots,\\
	\hat{h}_n^x( \hat{Y}^x_{nj})+\frac{1}{2i^2}\varepsilon_{j},  & \text{if } \hat{Y}^x_{nj}= \hat{t}^x_{ni}, \text{ and } \hat{\delta}^x_{nj}=1, \\
	\hat{h}_n^x( \hat{Y}^x_{nj})+\frac{1}{2i^2}+\frac{1}{2i^2}\varepsilon_{j},  & \text{if } \hat{Y}^x_{nj}= \hat{t}^x_{ni}, \text{ and } \hat{\delta}^x_{nj}=0.
	\end{cases}
	\]
	Denote the estimator based on the new samples  by $\tilde{S}^x_{nm}(t)$. The process $\hat{S}_{nm}^x(t)-\hat{S}_{n}^x(t)$ and $\tilde{S}^x_{nm}(\hat{h}^x_n(t))-\tilde{S}_n^x(\hat{h}^x_n(t))$ coincide.  And as   the  continuity condition is met by $\tilde{S}^x_{nm}(t)-\tilde{S}_n^x(t)$, the result of \eqref{eq:resultas} follows for   $\hat{S}_{nm}^x(t)-\hat{S}_{n}^x(t)$.
	The uniform consistency of $\hat{\Lambda}_{nm}^x$ follows from \eqref{eq:resultas} and   the result that the product integral is a Hadamard differentiable map from $D[0, \tau_n^x] \to D[0, \tau_{n}^x]$ \citep[Lemma 20.14]{vaart2000asymptotic}.
\end{proof}

\begin{proof}[Proof of  Theorem \ref{thm_together}]
Theorem \ref{thm_together} follows from Theorem \ref{thm2} and Lemma \ref{lem:KM}.

\end{proof}

\begin{funding}
%J. Huang is supported in part by the U.S. National Science Foundation grant DMS-1916199.
Y. Jiao is supported in part by the National Science Foundation of China (No. 11871474) and by the research fund of KLATASDSMOE of China.
 X. Zhao is supported in part by the Research Grant Council
of Hong Kong (15306521)   and The Hong Kong Polytechnic University (P0030124,
P0034285).
\end{funding}
%\fi

%\newpage
%\bibliographystyle{apalike}
\bibliographystyle{Chicago}
\bibliography{gcse_bib.bib}

\newpage
\appendix
%\numberwithin{equation}{section}

\section{Additional analysis results for the PBC and SUPPORT datasets}
%In this section, we include additional numerical results.
% for the PBC and SUPPORT datasets.
Below we include additional figures from the analysis of the PBC and SUPPORT datasets.

\begin{comment}
\subsection{Additional simulation results}

GCSE can also be used to estimate conditional density functions in a straightforward way.
%A density function can reveal more subtle features than its distribution function.
%To illustrate this,
Figure \ref{fig:simu_density} shows the estimated conditional density functions based on the proposed method (solid line) and the underlying conditional survival function (dashed line). The estimated conditional density curves are obtained by applying the kernel density estimation to the  samples from the estimated conditional generator. It can be seen that the estimated curves are close to the underlying curves, which indicates the proposed method works well in these models.

\begin{figure}[H]
	\centering
	\includegraphics[width=5.8 in, height=6.6 in]{}
	\caption{Models (M1)-(M4). The estimated conditional density functions based on the proposed method (dashed line) and the underlying conditional density
%survival
function (solid gray line). The estimated conditional density curves are obtained by applying the kernel density estimation to the
%estimated
samples from the estimated conditional generator.}
	\label{fig:simu_density}
\end{figure}
\end{comment}

%\subsection{Additional analysis results for the PBC and SUPPORT datasets}

\begin{figure}[H]
	\centering
	\includegraphics[width=5.8 in, height=2.4 in]{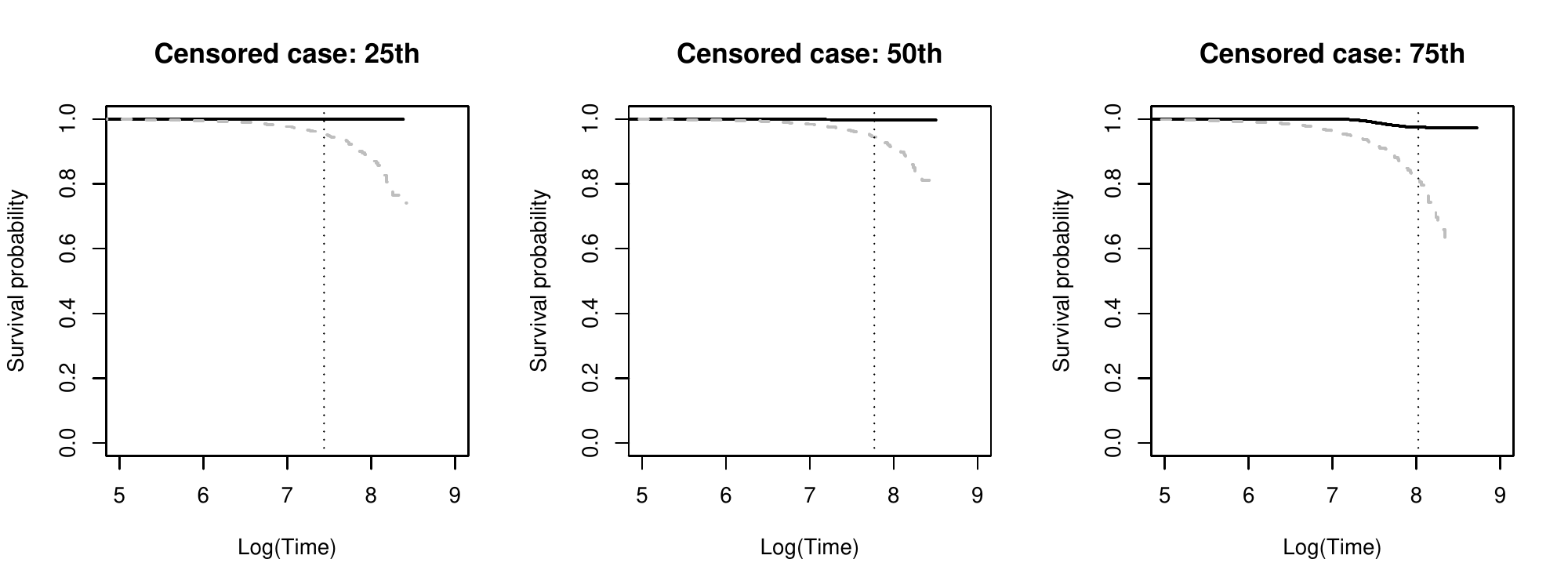}
	\caption{PBC dataset. Estimated survival functions for three censored cases in the test set. The black solid line is the GCSE estimate. The dashed line is the PH estimate. The dotted line indicated the censoring time.}
	\label{fig:pbc_3ce_case}
\end{figure}

In Figure  \ref{fig:pbc_3ce_case}, we plot the survival function estimation for 3 censored patients in test set whose censoring times are the 25th, 50th, 75th quantiles.
For the censored patients, most of the conditional samples generated by our methods are censored hence our survival function is almost a horizontal line and indeed this patient is censored.

\begin{figure}[H]
	\centering
	\includegraphics[width=5.8 in, height=2.4 in]{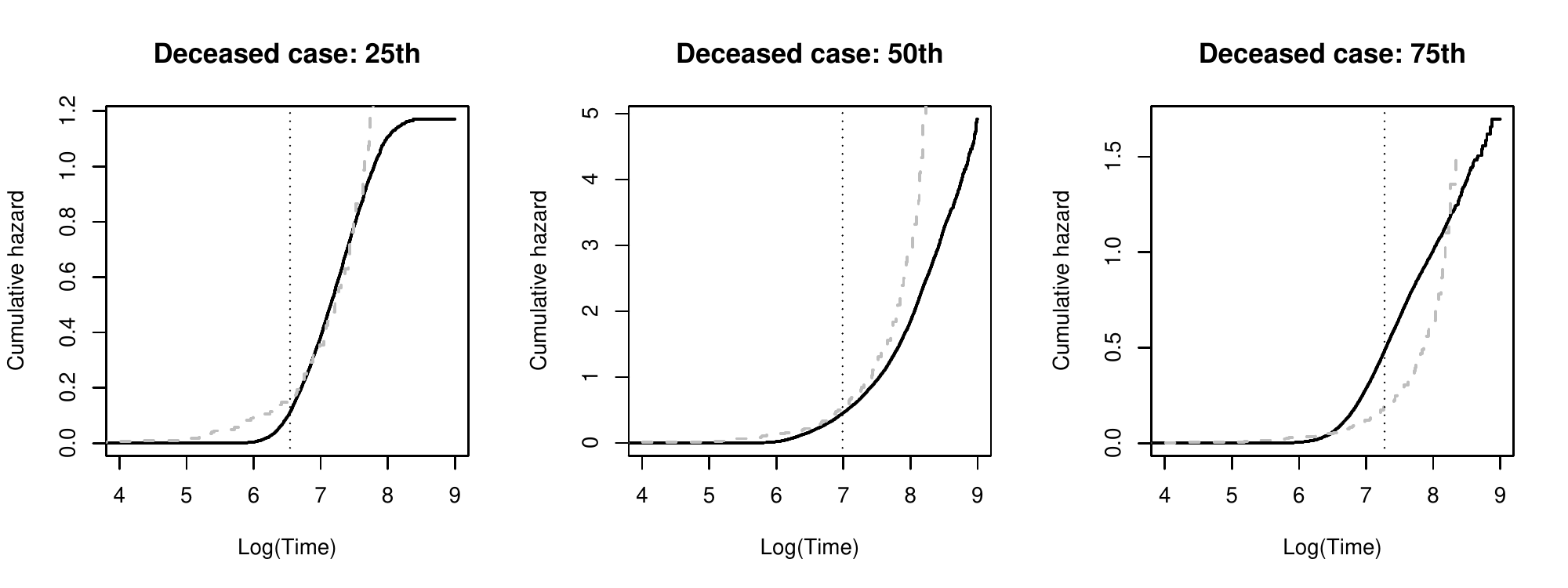}
	\caption{PBC dataset. Estimated cumulative hazard functions for three uncensored cases in the test set. The black solid line is the GCSE estimate. The dashed line is the PH estimate. The dotted line indicated the survival time.}
\label{fig:pbc_3de_case_cumhaz}
\end{figure}

\begin{figure}[H]
	\centering
	\includegraphics[width=5.8 in, height=2.4 in]{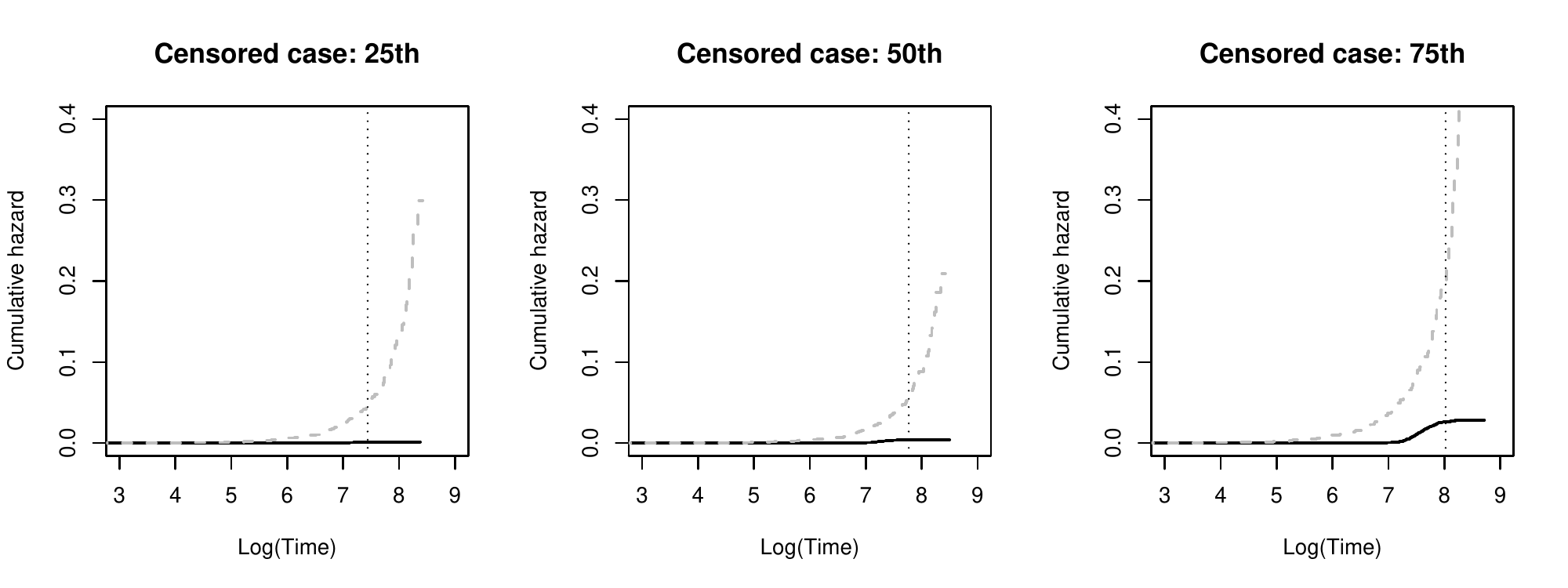}
\caption{PBC dataset. Estimated cumulative hazard functions for three censored cases in the test set. The black solid line is the GCSE estimate. The dashed line is the PH estimate. The dotted line indicated the censoring time.}
	\label{fig:pbc_3ce_case_cumhaz}
\end{figure}

 Figures \ref{fig:pbc_3de_case_cumhaz} and \ref{fig:pbc_3ce_case_cumhaz} show the
 estimated cumulative hazard functions for three patients whose survival times are the 25th, 50th, 75th quantiles of the survival times in the test set.

\begin{figure}[H]
	\centering
	\includegraphics[width=5.8 in, height=2.4 in]{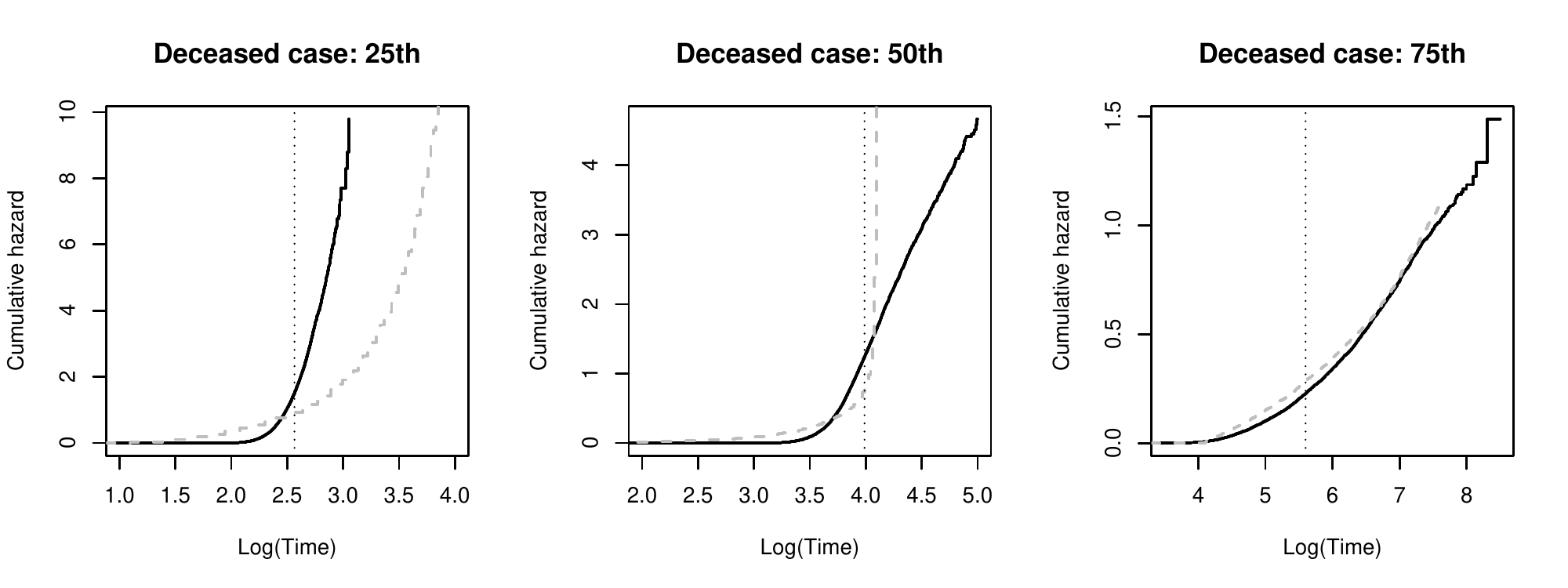}
	\caption{SUPPORT dataset. Estimated cumulative hazard functions for three uncensored cases in the test set. The black solid line is the GCSE estimate. The dashed line is the PH estimate. The dotted line indicated the survival time.}
\label{fig:support_3de_case_cumhaz}
\end{figure}

\begin{figure}[H]
	\centering
	\includegraphics[width=5.8 in, height=2.4 in]{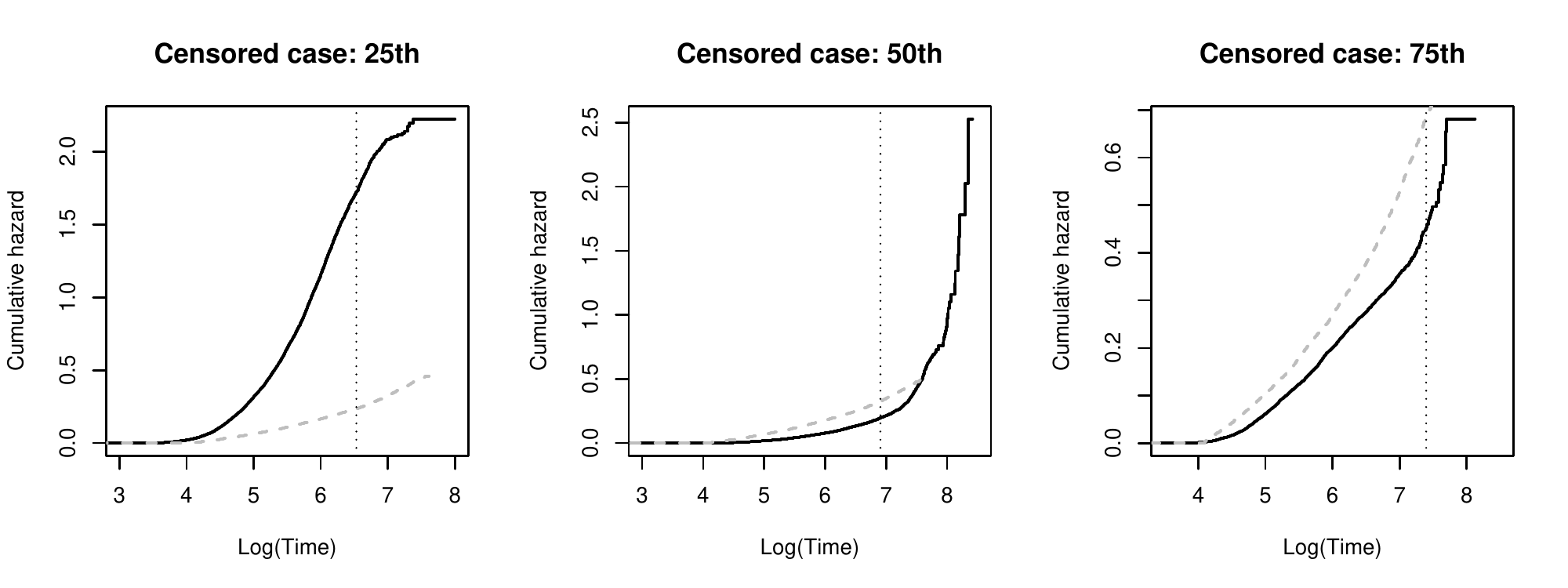}
	\caption{SUPPORT dataset. Estimated cumulative hazard functions for three censored cases in the test set. The black solid line is the GCSE estimate. The dashed line is the PH estimate. The dotted line indicated the censoring time.}
\label{fig:support_3ce_case_cumhaz}
\end{figure}

Figures \ref{fig:support_3de_case_cumhaz} and \ref{fig:support_3ce_case_cumhaz} show the estimated cumulative hazard functions for three patients in the test set.  The three survival times are the 25th, 50th, 75th quantiles of the survival times in the test. Similarly, the three censoring times are the 25th, 50th, 75th quantiles of the censoring times in the test.

\begin{figure}[h]
	\centering
	\includegraphics[width=5.8 in, height=2.4 in]{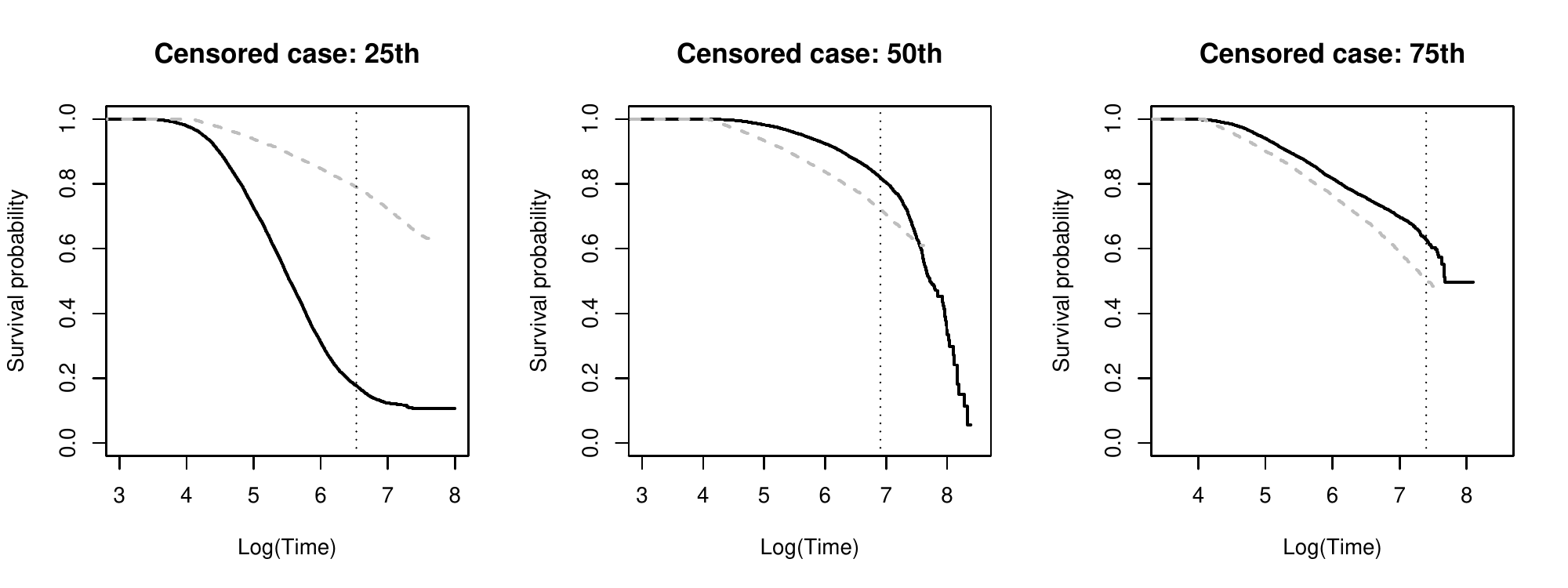}
	\caption{SUPPORT dataset. Estimated survival functions  for three censored cases in the test set. The black solid line is estimation of GCSE. The gray dashed line is estimation by PH. The dotted line is the censoring time.}
\label{fig:support_3ce_case}
\end{figure}

Figure \ref{fig:support_3ce_case} shows the estimated survival function for three censored patients in test set. The three censoring times of these patients are the 25th, 50th, 75th quantiles of the censoring times in the test set.

\end{document}